\documentclass[12pt, english]{article}
\usepackage[utf8]{inputenc}
\usepackage[T1]{fontenc}
\usepackage{babel}
\usepackage{float}
\usepackage{amsmath}
\usepackage{amsthm}
\usepackage{graphicx}
\usepackage[unicode=true,pdfusetitle,
 bookmarks=true,bookmarksnumbered=false,bookmarksopen=false,
 breaklinks=false,pdfborder={0 0 1},backref=page,colorlinks=false]
 {hyperref}

\makeatletter
\numberwithin{figure}{section}
\theoremstyle{plain}
\newtheorem{thm}{\protect\theoremname}
\newenvironment{lyxlist}[1]
	{\begin{list}{}
		{\settowidth{\labelwidth}{#1}
		 \setlength{\leftmargin}{\labelwidth}
		 \addtolength{\leftmargin}{\labelsep}
		 }}
	{\end{list}}
\theoremstyle{plain}
\newtheorem{prop}[thm]{\protect\propositionname}
\theoremstyle{plain}
\newtheorem{cor}[thm]{\protect\corollaryname}
\theoremstyle{definition}
\newtheorem{defn}[thm]{\protect\definitionname}
\theoremstyle{remark}
\newtheorem{rem}[thm]{\protect\remarkname}
\theoremstyle{plain}
\newtheorem{lem}[thm]{\protect\lemmaname}

\usepackage{physics,courier,afterpage,enumitem,amsthm}
\IfFileExists{mtpro2.sty}{\usepackage{libertine}\usepackage[subscriptcorrection,mtphrd,mtpcal,amssymbols]{mtpro2}}{%
  \providecommand*{\comp}{\circ}%
  \usepackage{lmodern}
  \usepackage{amsmath}
  \usepackage{amssymb}
}

\usepackage{graphicx,caption,tikz,xparse,float}
\usepackage[left=3.8cm,right=3.8cm,bottom=3.5cm,top=3.5cm]{geometry}

\usepackage{tikz-cd}

\theoremstyle{definition}

\theoremstyle{theorem}


\newenvironment{customthm}[1]
  {\innercustomthm}
  {\endinnercustomthm}

\newenvironment{customlem}[1]{\innercustomlemma}{\endinnercustomlemma}

\newenvironment{customcor}[1]
  {\innercustomcor}
  {\endinnercustomcor}

\newenvironment{customprop}[1]
  {\innercustomprop}
  {\endinnercustomprop}

\usepackage{titlesec}
\titleformat{\subsubsection}[runin]
               {\normalfont\bfseries}
               {\thesubsubsection.}{.5em}{}[.---]
             \titlespacing{\subsubsection}
               {0pt}{1.5ex plus .1ex minus .2ex}{0pt}

\newcommand{\ens}[1]{\left\lbrace #1 \right\rbrace}

\newcommand{\del}{\partial}
\newcommand{\RR}{\mathbb{R}}
\newcommand{\Ac}{\mathcal{A}}

\DeclareMathOperator{\Spec}{Spec}
\newcommand{\iprod}{%
    \mathbin{\scalebox{1.2}{$\lrcorner$}}%
}
\DeclareMathOperator{\im}{im}

\DeclareMathOperator{\ind}{ind}
\DeclareMathOperator{\Crit}{Crit}
\newcommand{\Db}{D_\text{b}}
\newcommand{\Dc}{D_\text{c}}

\DeclareMathOperator{\Hf}{HF}
\DeclareMathOperator{\CF}{CF}
\DeclareMathOperator{\SH}{SH}
\DeclareMathOperator{\CZ}{CZ}
\DeclareMathOperator{\HH}{H}
\DeclareMathOperator{\CO}{C}
\DeclareMathOperator{\Sk}{Sk}
\newcommand{\id}{\mathrm{id}}
\newcommand{\Phib}{\Phi^{\text{b}}}
\newcommand{\Phic}{\Phi^{\text{c}}}
\DeclareMathOperator{\Ham}{Ham}
\DeclareMathOperator{\Symp}{Symp}
\DeclareMathOperator{\diam}{diam}


\makeatother

\providecommand{\corollaryname}{Corollary}
\providecommand{\definitionname}{Definition}
\providecommand{\lemmaname}{Lemma}
\providecommand{\propositionname}{Proposition}
\providecommand{\remarkname}{Remark}
\providecommand{\theoremname}{Theorem}

\setcounter{tocdepth}{1}
\begin{document}
\begin{center} 	
\textbf{THE SPECTRAL DIAMETER OF A LIOUVILLE DOMAIN}\\ Pierre-Alexandre Mailhot
\end{center}

\begin{abstract}
	\noindent 
The group of compactly supported Hamiltonian diffeomorphisms of a symplectic manifold is endowed with a natural bi-invariant distance, due to Viterbo, Schwarz, Oh, Frauenfelder and Schlenk, coming from spectral invariants in Hamiltonian Floer homology. This distance has found numerous applications in symplectic topology. However, its diameter is still unknown in general. In fact, for closed symplectic manifolds there is no unifying criterion for the diameter to be infinite.
In this paper, we prove that for any Liouville domain this diameter is infinite if and only if its symplectic cohomology does not vanish. This generalizes a result of Monzner-Vichery-Zapolsky and has applications in the setting of closed symplectic manifolds.
\end{abstract}

\tableofcontents{}

\section{Introduction and results}

Liouville domains are a special kind of compact symplectic manifolds
with boundary. They are characterized by their exact symplectic form $\omega = \dd \lambda$ and the fact that their boundary is of contact type. Given that they do not close up, they are quite easy to construct and allow us to
study under a common theoretical framework many important classes
of symplectic manifolds. Examples of such manifolds include cotangent
disk bundles over closed manifolds, complements of Donaldson divisors
\cite{Gi17}, preimages of some intervals under exhausting functions
of Stein manifolds \cite{CiEl12}, positive regions of convex hypersurfaces
in contact manifolds \cite{Gi91} and total spaces of Lefschetz fibrations.

A key invariant of a Liouville domain $D$ is its symplectic cohomology
$\SH^{\ast}(D)$. It was first defined by Cielieback, Floer and Hofer
\cite{FlHo94,CiFlHo95} and later developed by Viterbo \cite{Vi99}.
Symplectic cohomology allows one to study the behavior of periodic
Reeb orbits on the boundary of $D$. It is defined in terms of the
Floer cohomology groups of a specific class of Hamiltonian functions
on the completion $\hat{D}$ of $D$ which results from the gluing
of the cylinder $[1,\infty)\times\partial D$ to $\partial D$. 

The primary goal of this paper is to relate symplectic cohomology and spectral invariants,
an important characteristic in Hamiltonian dynamics. When defined
on a symplectic manifold $(M,\omega)$, spectral invariants associate
to any pair $(\alpha,H)\in\HH^{\ast}(M)\times C_{c}^{\infty}(S^{1}\times M)$
a real number $c(\alpha,H)$, that belongs to the spectrum of the
action functional associated to $H$\footnote{at least if the Hamiltonian satisfies certain technical conditions.}.
Spectral invariants were first defined in $\RR^{2n}$ from the point
of view of generating functions by Viterbo in \cite{Vi92}. They were
then constructed on closed symplectically aspherical manifolds by
Schwarz in \cite{Sc00} and general closed symplectic manifolds by
Oh in \cite{Oh05} (see also \cite{Us13}).

In \cite{FrSc07}, Frauenfelder and Schlenk construct spectral invariants
on Liouville domains. 
These spectral invariants are homotopy invariant in the Hamiltonian
term in the following sense. If two compactly supported Hamiltonians
$H$ and $F$ generate the same time-one map, $\varphi_{H}=\varphi_{F}$,
then $c(\alpha,H)=c(\alpha,F)$. Thus $c(\alpha,\bullet)$ descends
to the group of compactly supported Hamiltonian diffeomorphisms $\Ham_{c}(D)$.
This allows one to define a bi-invariant norm $\gamma$ on $\Ham_{c}(D)$, called
the spectral norm, by 
\begin{align*}
\gamma(\varphi)=c(1,\varphi)+c(1,\varphi^{-1}) & .
\end{align*}

One key feature of the spectral norm $\gamma$ is the fact that it
acts as a lower bound to the celebrated Hofer norm introduced by Hofer
in \cite{Ho90} (see the article of Lalonde and McDuff \cite{LaMc95}
and the book of Polterovich \cite{Po01} for further developments
in the subject). It is thus natural to ask whether the spectral diameter
\begin{align*}
\diam_{\gamma}(M)=\sup\{\gamma(\varphi)\mid\varphi\in\Ham_{c}(M)\}
\end{align*}
is finite or not. In particular, if $\diam_{\gamma}(M)=+\infty$ then
the Hofer norm is assured to be unbounded. Further links between the spectral norm and Hofer geometry are discussed in Section \ref{sec:Hofer geometry}.

\subsection{Main results}
In this article, we find a characterization of the finiteness of $\diam_{\gamma}(D)$ in the case of a Liouville domain $(D,\dd\lambda)$ in terms of its symplectic cohomology.


Our main technical result shows that if $\SH^{\ast}(D)\neq0$ then $c(1,H)$ can be made arbitrarily large. This, combined with the converse implication which was proved by Benedetti and Kang \cite{BeKa20}, implies 

\begin{customthm}{A1}\label{thm:finite_diameter}
 	Let $(D,\lambda)$ be a Liouville domain. Then $\diam_\gamma(D)=+\infty$ if and only if $\SH^\ast(D)\neq 0$.
\end{customthm}

\noindent As an intermediate step to proving Theorem \ref{thm:finite_diameter},
we show the following auxiliary result.

\begin{customlem}{B}\label{lem:C}\label{lem:c geq 0}
	Let $H$ be a compactly supported Hamiltonian on a Liouville domain $(D,\lambda)$. Then, $$c(1, H)\geq 0.$$ 
\end{customlem}

\noindent Lemma \ref{lem:c geq 0} is a cohomological adaptation
of \cite[Lemma 4.1]{GaTa20}. 

In fact, when the symplectic cohomology of a Liouville domain is non-vanishing,
the implication of Theorem \ref{thm:finite_diameter} follows from
a sharper result. Denote by $d_{\gamma}(\varphi,\psi)=\gamma(\varphi\comp\psi^{-1})$
the spectral distance on $\Ham_{c}(D)$ and by $d_{\text{{st}}}$
the standard Euclidean distance on $\RR$.

\begin{customthm}{A2}\label{thm:B2}
 	Let $(D,\lambda)$ be a Liouville domain such that $\SH^\ast(D)\neq 0$. Then there exists an isometric group embedding $(\RR,d_{\text{st}})\to (\Ham_c(D),d_\gamma)$.
\end{customthm}The proof of Theorem \ref{thm:B2} uses an explicit construction of
an isometric group embedding. This construction is a generalization
of the procedure used by Monzner and Vichery and Zapolsky to prove
Theorem \ref{thm:Embedding_R} below. The construction of the aforementioned
embedding relies primarily on the computation of spectral invariants
of Hamiltonians which are constant on the skeleton of $D$, a special
subset of Liouville domains which we define in Section \ref{subsec:Completion-of-Liouville}.

\begin{customlem}{C}\label{lem:skeleton}
	Suppose $(D,\lambda)$ is a Liouville domain such that $\SH^\ast(D)\neq 0$. Let $H$ be a compactly supported autonomous Hamiltonian on $D$ such that $$ H\big|_{\Sk(D)}=-A\quad  \text{and}\quad  -A\leq H\big|_{D}\leq 0$$ for a constant $A>0$. Then
$$
c(1,H) = A.
$$
\end{customlem}

\subsection{What is already known for Liouville domains}
\noindent Following the work of Benedetti and Kang \cite{BeKa20}, it is known that the spectral diameter of a Liouville domain $D$ is bounded if its symplectic cohomology vanishes. This result was achieved using a special capacity derived from the filtered symplectic cohomology of $D$. To better understand how this is done, let us give an overview of the construction of $\SH^{*}(D)$ following
\cite{Vi99}.   

Focusing our attention to the class of Hamiltonians,
called admissible, which are affine\footnote{See Definition \ref{def:Affine_hamiltonian} for the precise conditions.}
in the radial coordinate on the cylindrical part of $\hat{D}$, filtered
cohomology groups $\Hf_{(a,b)}^{\ast}(H)$ are well defined and only
depend on the slope of the Hamiltonian on $[1,\infty)\times\partial D$.
Taking an increasing sequence of admissible Hamiltonians $\ens{H_{i}}_{i}$
with corresponding slopes $\ens{\tau_{i}}_{i}$ satisfying $\tau_{i}\to+\infty$,
one can define the filtered symplectic cohomology $\SH_{(a,b)}^{\ast}(D)$
of $D$ as 
\begin{align*}
\SH_{(a,b)}^{\ast}(D)=\lim_{\substack{\longrightarrow\\
H_{i}
}
}\Hf_{(a,b)}^{\ast}(H_{i}).
\end{align*}
It follows from the above definition that for $a\leq a'$ and $b\leq b'$
there is a natural map $\iota_{a,a'}^{b,b'}:\SH_{(a,b)}^{\ast}(D)\to\SH_{(a',b')}^{\ast}(D)$.
Moreover, the full symplectic cohomology $\SH^{\ast}(D)=\SH_{(-\infty,\infty)}^{\ast}(D)$
comes with a natural map
\begin{align*}
v^{\ast}:\HH^{\ast}(D)\to\SH^{\ast}(D).
\end{align*}
called the Viterbo map. The failure of $v^{\ast}$ to be an isomorphism signals the presence
of Reeb orbits on the boundary of $D$. Thus, $\SH^{\ast}(D)$ is
a useful tool to study the Weinstein conjecture \cite{We79} which
claims that on any compact contact manifold, the Reeb vector field
should admit at least one periodic orbit. For instance, in \cite{Vi99},
Viterbo proves the Weinstein conjecture for the boundary of subcritical
Stein manifolds. Note that the symplectic cohomology approach to the
Weinstein conjecture has limitations as it relies on finding a Liouville
filling of the given contact manifold. 

We can extend any compactly supported Hamiltonian $H\in C_{c}^{\infty}(S^{1}\times D)$
to an admissible Hamiltonian with small slope $H^{\varepsilon}$ and define its Floer cohomology as $\Hf^{\ast}(H)=\Hf^{\ast}(H^{\varepsilon})$. A key property of Floer cohomology on Liouville domains is that if an admissible Hamiltonian $F$ has a slope close enough to zero, then we have an isomorphism $\Phi_{F}:\HH^{\ast}(D)\to\Hf^{\ast}(F)$. Thus, the Floer cohomology of compactly supported Hamiltonians on
$D$ is well defined. 

Let $H$ be a compactly supported Hamiltonian. Followig \cite{FrSc07}, the spectral invariant associated to $(\alpha,H)\in\HH^{\ast}(D)\times C_{c}^{\infty}(S^{1}\times D)$
corresponds to the real number 
\begin{align*}
c(\alpha,H)=\inf\ens{c\in\RR\mid\Phi_H(\alpha)\in\im\iota^{<c}}
\end{align*}
where 
\begin{align*}
\iota^{<c} & =\iota_{-\infty,-\infty}^{c,+\infty}:\Hf_{(-\infty,c)}^{\ast}(H)\to\Hf^{\ast}(H)
\end{align*}
is the map induced by natural inclusion of subcomplexes.

 Now, define the SH-capacity of $D$ as 
\begin{align*}
c_{\SH} & (D)=\inf\ens{c>0\mid\iota_{-\infty,-\infty}^{\varepsilon,c}=0}\in(0,\infty],
\end{align*}
where, for $\epsilon>0$ sufficiently small, $\iota_{-\infty,-\infty}^{\varepsilon,c}:\SH_{(-\infty,\varepsilon)}^{\ast}(D)\to\SH_{(-\infty,c)}^{\ast}(D)$.
It is known that $c_{\text{SH}}(D)$ is finite if and only if $\SH^{\ast}(D)$
vanishes. Using this, Benedetti and Kang prove the following upper
bound on spectral invariants of compactly supported Hamiltonians with respect to the unit. 

\begin{thm}[\cite{BeKa20}]\label{thm:B}
	Let $(D,\dd\lambda)$ be a Liouville domain with $\SH^\ast(D)=0$. Then,
	$$\sup\lbrace c(1,H)\rbrace \leq c_{\SH}(D)<+\infty,$$
where the supremum is taken over all compactly supported Hamiltonians in $D$.
\end{thm}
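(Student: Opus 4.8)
The plan is to prove that for every $c>c_{\SH}(D)$ and every compactly supported $H$ one has $c(1,H)\le c$; letting $c\downarrow c_{\SH}(D)$ and recalling that $c_{\SH}(D)<+\infty$ precisely because $\SH^\ast(D)=0$ then gives the stated bound. By the defining characterization of the spectral invariant, $c(1,H)\le c$ is equivalent to $\Phi_H(1)$ lying in the image of $\iota^{<c}\colon\Hf^\ast_{(-\infty,c)}(H)\to\Hf^\ast(H)$, which in turn — via the long exact sequence attached to the inclusion of subcomplexes $\CF^\ast_{(-\infty,c)}(H)\hookrightarrow\CF^\ast(H)$ — is equivalent to the vanishing of the image of $\Phi_H(1)$ in the quotient cohomology $\Hf^\ast_{[c,\infty)}(H)$. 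Since, by the standard comparison estimates, $c(1,\cdot)$ is monotone non-increasing in the Hamiltonian and every compactly supported $H$ dominates some smooth compactly supported $G\le 0$, it suffices to treat the case $H\le 0$.

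Next I would pass to an admissible extension $H^\varepsilon$ of small slope $\varepsilon$, so that $\Hf^\ast(H)=\Hf^\ast(H^\varepsilon)$ and $\Phi_H=\Phi_{H^\varepsilon}$ is the PSS isomorphism carrying $1$ to the unit of $\Hf^\ast(H^\varepsilon)$. Fix a cofinal increasing sequence $\{K_j\}_j$ of admissible Hamiltonians that are $C^2$-small and $\le 0$ on $D$ and affine with slopes $\tau_j\to+\infty$ on the cylindrical end, arranged so that $H^\varepsilon\le K_j$ for every $j$. The continuation maps $\Hf^\ast(K_j)\to\Hf^\ast(K_{j+1})$, whose defining homotopies only modify the slope on the end, strictly respect the action filtration, whereas the single continuation $\Hf^\ast(H^\varepsilon)\to\Hf^\ast(K_1)$ is action non-increasing. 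All of these maps are compatible with the PSS classes, so in the colimit they produce $\Hf^\ast(H^\varepsilon)\to\SH^\ast(D)=0$, sending $\Phi_{H^\varepsilon}(1)$ to the unit of $\SH^\ast(D)$, together with its filtered refinements $\Hf^\ast_{(-\infty,a)}(H^\varepsilon)\to\SH^\ast_{(-\infty,a)}(D)$.

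The heart of the matter is then a chain-level transfer. Since $c>c_{\SH}(D)$, the map $\iota^{\varepsilon,c}_{-\infty,-\infty}$ vanishes, which at the chain level says that for $j$ large the continuation image $u_j$ of the low-action unit cocycle is a coboundary $u_j=\del\beta_j$ with every generator appearing in $\beta_j$ of action $<c$; meanwhile the chain-level continuation $\widehat G\colon\CF^\ast(H^\varepsilon)\to\CF^\ast(K_j)$ is action non-increasing and carries a unit cocycle $\tilde u$ of $H^\varepsilon$ to $u_j$ up to a coboundary. I would then run a filtered homological-algebra argument — working inside the mapping cone of $\widehat G$ and using that $\Hf^\ast(H^\varepsilon)\cong\HH^\ast(D)$ is finite-dimensional, hence that the relevant colimit stabilizes at a finite stage — to upgrade the bounded-action primitive $\beta_j$ on the target side into the statement that $\tilde u$ is itself cohomologous in $\CF^\ast(H^\varepsilon)$ to a cocycle supported in action $<c$, i.e.\ $\Phi_{H^\varepsilon}(1)\in\im\iota^{<c}$.

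The step I expect to be the main obstacle is exactly this last transfer. The map $\widehat G$ is far from a quasi-isomorphism — large slope kills generators, and the deep wells of $H$ have no counterpart on the symplectic-cohomology side — so one cannot literally pull back the primitive $\beta_j$; one must instead track how the action filtration propagates through the \emph{entire} tower of continuations, isolating the part of $\CF^\ast(K_j)$ coming from $D$ from the Reeb contributions on the end, and crucially use that, for the cofinal family defining $\SH^\ast(D)$, the ``death level'' of the unit is by construction at most $c_{\SH}(D)$. A smaller but necessary technical point is the reduction to $H\le 0$ and the verification that the PSS class behaves as the unit throughout, which follows from its compatibility with the ring and continuation structures.
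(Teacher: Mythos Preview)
First, the paper does not prove this statement itself: Theorem~\ref{thm:B} is quoted from Benedetti--Kang \cite{BeKa20}, and the surrounding text only recalls the definition of $c_{\SH}(D)$ and the fact that it is finite precisely when $\SH^\ast(D)=0$. So there is no in-paper argument to compare your outline against; I can only assess whether your sketch constitutes a proof.

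Your reduction to $H\le 0$ and the setup with a cofinal family $\{K_j\}$ and the action non-increasing, unital continuation $\widehat G\colon\CF^\ast(H^\varepsilon)\to\CF^\ast(K_j)$ are fine, and the vanishing of $\iota^{\varepsilon,c}_{-\infty,-\infty}$ on $\SH^\ast$ does yield, for large $j$, a primitive $\beta_j\in\CF^{\ast-1}_{<c}(K_j)$ with $u_j=\partial\beta_j$. The problem is exactly the step you flag, and it is a genuine gap rather than a missing routine verification. From $\widehat G(\tilde u)=u_j+\partial\gamma$ and $u_j=\partial\beta_j$ you only learn that $\widehat G(\tilde u)$ is exact in $\CF^\ast(K_j)$; this places no constraint on the action level at which $1_{H^\varepsilon}$ can be represented in $\CF^\ast(H^\varepsilon)$. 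In filtered language: $\widehat G$ induces $\Hf^\ast_{(c,\infty)}(H^\varepsilon)\to\Hf^\ast_{(c,\infty)}(K_j)$, and you have shown that the image of $[\pi_{>c}](1_{H^\varepsilon})$ under this map vanishes --- indeed $\Hf^\ast(K_j)=0$ for large $j$, so \emph{everything} maps to zero there. Concluding $[\pi_{>c}](1_{H^\varepsilon})=0$ would require this filtered continuation to be injective, and there is no reason for that: the high-action generators on the $H^\varepsilon$ side are deep interior orbits of $H$, while on the $K_j$ side they are Reeb-type collar orbits, and $\widehat G$ need not relate them in any controlled way. Neither the mapping-cone reformulation nor the finite-dimensionality of $\Hf^\ast(H^\varepsilon)$ repairs this: the cone inherits the filtration, but extracting a bounded-action primitive on the source side from one on the target side is precisely the unavailable inversion of $\widehat G$ on filtered pieces, and ``the colimit stabilizes'' only reasserts that the target vanishes.

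What is missing is an honest comparison between the action filtration on $\CF^\ast(H^\varepsilon)$ and the one computing $\SH^\ast_{(-\infty,c)}(D)$. Your closing remark about isolating the interior generators from the Reeb contributions points toward the right kind of structural input, but it has to be implemented concretely --- this is where the actual work in \cite{BeKa20} lies, and it is not a consequence of abstract homological algebra on $\widehat G$ alone.
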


\noindent In particular, by definition of the spectral norm, if $\SH^{\ast}(D)=0$, then for any compactly supported Hamiltonian $H$ generating $\varphi_{H}\in\Ham_{c}(D)$, we have
\begin{align*}
\gamma(\varphi) & =c(1,\varphi)+c(1,\varphi^{-1})\leq2c_{\SH}(D)<+\infty.
\end{align*}
Therefore, Theorem \ref{thm:B} provides the \emph{only if} part of Theorem \ref{thm:finite_diameter}.

On the other hand, symplectic cohomology is known to be non-zero in many cases \cite[ Section 5]{Se08}.
Since we will be using $\mathbb{Z}_{2}$ coefficients throughout this article, one case of particular interest to us is the following.
\begin{prop}[\cite{Vi99}]
 \label{prop:Viterbo_Lagrangian}Suppose $D$ contains a closed exact
Lagrangian submanifold $L$. Then, $\SH^{\ast}(D)\neq0$.
\end{prop}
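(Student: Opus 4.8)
\noindent The plan rests on two standard structural features of symplectic cohomology. First, $\SH^\ast(D)$ is a unital ring, and the Viterbo map $v^\ast$, like the other maps used below, is a unital ring homomorphism; since a unital ring vanishes if and only if its unit does, any unital ring homomorphism from $\SH^\ast(D)$ into a \emph{nonzero} ring already forces $\SH^\ast(D)\neq 0$. Second, $\SH^\ast$ carries Viterbo transfer (restriction) maps for codimension-zero Liouville embeddings. It therefore suffices to embed into $D$ a Liouville subdomain whose symplectic cohomology is manifestly nonzero.

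First I would apply the Weinstein neighbourhood theorem to $L$: because $L$ is a \emph{closed exact} Lagrangian, a neighbourhood of $L$ in $(D,\dd\lambda)$ is Liouville-isomorphic to a cotangent disk bundle $D^\ast L$ with its canonical Liouville structure, so that $D^\ast L$ embeds into $D$ as a codimension-zero Liouville subdomain --- it is precisely the exactness of $\lambda|_L$ that upgrades the symplectic neighbourhood to a Liouville one. Viterbo's transfer construction \cite{Vi99} then produces a unital ring homomorphism $\SH^\ast(D)\to\SH^\ast(D^\ast L)$.

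It remains to know that $\SH^\ast(D^\ast L)\neq 0$, and this is the one substantial input: Viterbo's computation identifying $\SH^\ast(D^\ast L)$, over $\mathbb{Z}_2$ and up to a degree shift, with the $\mathbb{Z}_2$-homology of the free loop space $\mathcal{L}L$. Because the inclusion of constant loops $L\hookrightarrow\mathcal{L}L$ is split by evaluation at a point, $\HH_\ast(L;\mathbb{Z}_2)$ is a nonzero direct summand of $\HH_\ast(\mathcal{L}L;\mathbb{Z}_2)$, so $\SH^\ast(D^\ast L)\neq 0$. Combining this with the transfer map and the first paragraph completes the proof.

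The main obstacle is the loop-space computation; all that is really needed from it is the bare nonvanishing $\SH^\ast(D^\ast L)\neq 0$, so any route to that --- via the Abbondandolo--Schwarz chain model, or via nonvanishing of the wrapped Floer cohomology of a cotangent fibre --- would do. Alternatively, one can bypass cotangent bundles altogether and use the closed--open map $\SH^\ast(D)\to\Hf^\ast(L,L)$, again a unital ring homomorphism: exactness of $L$ forbids holomorphic disk bubbling (such a disk $u$ would have symplectic area $\int u^\ast\dd\lambda=\int_{\partial}u^\ast\lambda$, which vanishes because $u(\partial)\subset L$ and $\lambda|_L$ is exact), so $\Hf^\ast(L,L)\cong\HH^\ast(L;\mathbb{Z}_2)\neq 0$ and the conclusion follows as before.
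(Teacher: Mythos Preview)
The paper does not supply its own proof of this proposition; it is simply attributed to Viterbo \cite{Vi99} and used as a black box. Your proposal is correct and reproduces the standard argument behind Viterbo's result: pass to a Weinstein neighbourhood of $L$ (using exactness of $\lambda|_L$ to upgrade it to a genuine Liouville subdomain), apply the unital Viterbo transfer map $\SH^\ast(D)\to\SH^\ast(D^\ast L)$, and invoke the identification of $\SH^\ast(D^\ast L)$ with the $\mathbb{Z}_2$-homology of the free loop space $\mathcal{L}L$, which is nonzero. The alternative you sketch via the closed--open map to $\Hf^\ast(L,L)\cong\HH^\ast(L;\mathbb{Z}_2)$ is equally valid and arguably more economical, since it avoids the loop-space computation entirely.
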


\noindent This result of Viterbo can be used, in conjunction with Theorem \ref{thm:finite_diameter}, to prove that the spectral
diameter is infinite for quite general classes of Liouville domains.

\subsubsection{Cotangent bundles}
In \cite{MoViNi12}, Monzner, Vichery and Zapolsky show the following.
\begin{thm}
\label{thm:Embedding_R}Let $N$ be a closed manifold. There exists an isometric group embedding
of $(\mathbb{R},d_{\text{st}})$ in $(\Ham_{c}(T^{\ast}N),d_{\gamma})$.
\end{thm}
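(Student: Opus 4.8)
The plan is to derive Theorem \ref{thm:Embedding_R} directly from Theorem \ref{thm:B2} and Proposition \ref{prop:Viterbo_Lagrangian}, applied to the unit disk cotangent bundle of $N$. Fix a Riemannian metric $g$ on $N$ and let $D^{\ast}N=\ens{(q,p)\in T^{\ast}N\mid |p|_{g}\leq 1}$, endowed with the restriction of the canonical Liouville form $\lambda$ (so that $\dd\lambda$ is the canonical symplectic form on $T^{\ast}N$). Then $(D^{\ast}N,\lambda)$ is a Liouville domain whose boundary $S^{\ast}N$ is of contact type, and its Liouville completion $\widehat{D^{\ast}N}$, obtained by gluing $[1,\infty)\times S^{\ast}N$, is symplectomorphic to $T^{\ast}N$ itself. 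Consequently $\Ham_{c}(T^{\ast}N)=\Ham_{c}(\widehat{D^{\ast}N})$, and the bi-invariant norm appearing in the statement is exactly the Frauenfelder--Schlenk spectral norm $\gamma$ attached to the Liouville domain $D^{\ast}N$; in particular $d_{\gamma}$ on $\Ham_{c}(T^{\ast}N)$ is the spectral distance of $D^{\ast}N$.

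Next I would observe that the zero section $0_{N}\subset D^{\ast}N$ is a closed exact Lagrangian submanifold: it is compact without boundary since $N$ is closed, it is Lagrangian for $\dd\lambda$, and $\lambda|_{0_{N}}\equiv 0$ is exact. As the whole paper works with $\mathbb{Z}_{2}$ coefficients, Proposition \ref{prop:Viterbo_Lagrangian} applies and gives $\SH^{\ast}(D^{\ast}N)\neq 0$. Applying Theorem \ref{thm:B2} to $D=D^{\ast}N$ then produces an isometric group embedding $(\RR,d_{\text{st}})\to(\Ham_{c}(D^{\ast}N),d_{\gamma})$, and composing with the inclusion $\Ham_{c}(D^{\ast}N)\hookrightarrow\Ham_{c}(T^{\ast}N)$ --- an isometry for $d_{\gamma}$, since in both cases $\gamma$ is computed by the Frauenfelder--Schlenk construction attached to $D^{\ast}N$ --- yields the desired isometric group embedding $(\RR,d_{\text{st}})\to(\Ham_{c}(T^{\ast}N),d_{\gamma})$.

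Once Theorem \ref{thm:B2} is available there is no substantial obstacle; the only delicate point is a bookkeeping one, namely the identification in the first paragraph of $\Ham_{c}(T^{\ast}N)$ and of its spectral norm with the data of the Liouville domain $D^{\ast}N$. This rests on $T^{\ast}N=\widehat{D^{\ast}N}$ together with the observation that $\Ham_{c}(T^{\ast}N)$ is the union over $R>0$ of the groups of Hamiltonian diffeomorphisms supported in the interior of the radius-$R$ disk subbundle, so that every compactly supported Hamiltonian on $T^{\ast}N$ is admissible for the Frauenfelder--Schlenk theory of $D^{\ast}N$. In fact the point can be sidestepped entirely: as in Lemma \ref{lem:skeleton}, the one-parameter family realizing the embedding of Theorem \ref{thm:B2} is built from autonomous Hamiltonians that are constant on the skeleton $\Sk(D^{\ast}N)=0_{N}$, and these may be taken compactly supported in the interior of $D^{\ast}N$, so the family lies in $\Ham_{c}(T^{\ast}N)$ from the outset. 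This also recovers the original construction of Monzner--Vichery--Zapolsky, which Theorem \ref{thm:B2} generalizes.
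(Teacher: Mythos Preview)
Your proof is correct but takes a different route from the paper. The paper reproduces the original Monzner--Vichery--Zapolsky argument: one fixes $H\in C_c^\infty(T^*N)$ with $H|_N=1$ and $0\le H\le 1$, sets $\iota(s)=\varphi_{sH}$, bounds $d_\gamma(\iota(s),\iota(s'))$ from above by the Hofer norm, and for the lower bound invokes the \emph{Lagrangian} spectral invariant $\ell(1,\cdot)$ associated to the zero section (with the key property $\ell(1,F)=r$ whenever $F|_N=r$), together with $\ell(1,\cdot)\le c(1,\cdot)$ and Lemma~\ref{lem:c geq 0}. You instead deduce the result as a corollary of Theorem~\ref{thm:B2}, verifying its hypothesis $\SH^*(D^*N)\neq 0$ via Proposition~\ref{prop:Viterbo_Lagrangian} applied to the zero section. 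Both arguments ultimately build the embedding from autonomous Hamiltonians constant on the zero section; the difference is in how the lower bound $c(1,(s-s')H)\ge s-s'$ is obtained --- the paper via Lagrangian Floer theory, you via Lemma~\ref{lem:skeleton} and the symplectic-cohomology machinery underlying Theorem~\ref{thm:finite_diameter}. Your route is not circular, since the proofs of Theorem~\ref{thm:B2} and Lemma~\ref{lem:skeleton} in the paper do not use Theorem~\ref{thm:Embedding_R}; it simply inverts the paper's expository flow, using the main generalization to recover the motivating special case. The paper itself makes essentially this observation for the weaker Corollary~\ref{cor:cotangent_bundle}, though not for the full isometric embedding.
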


For the reader's convenience, we give a detailed proof of Theorem
\ref{thm:Embedding_R} along the lines of \cite{MoViNi12}. We refer
the reader to Section \ref{sec:Spectral-invariants} for details on
the standard notation in use here.

Fix $H\in C_{c}^{\infty}(T^{\ast}N)$ such that $H|_{N}=1$ and $0\leq H\leq1$.
Let $\iota:\RR\to\Ham_{c}(T^{\ast}N)$ be the map defined by 
\begin{align*}
\iota(s)=\varphi_{sH}
\end{align*}
where $\varphi_{sH}\in\Ham_{c}(T^{\ast}N)$ is the time-one map associated
to $sH\in C_{c}^{\infty}(T^{\ast}N)$. We claim that $\iota$ is the
desired embedding.

We first bound $d_{\gamma}(\iota(s),\iota(s'))$ from above. As previously
mentioned, if $F\in C_{c}^{\infty}(T^{\ast}N)$, then $\gamma(\varphi_{F})\leq\norm{F}$,
where $\norm{F}=\max\abs{F}$ denotes the $C^{0}$-norm of $F$. Moreover,
since $H$ is autonomous, $sH\#\overline{{s'H}}=(s-s')H$. Therefore,
\begin{align*}
d_{\gamma}(\iota(s),\iota(s')) & =\gamma(\iota(s)\iota(s')^{-1})\leq\norm{(s-s')H}=\abs{s-s'}.
\end{align*}

Now, we bound $d_{\gamma}(\iota(s),\iota(s'))$ from below. In \cite[Section 2]{MoViNi12}
a Lagrangian spectral invariant $\ell(1,\bullet)$ with respect to
the zero section $N$ of $T^{\ast}N$ is constructed. It is shown
that $\ell(1,\bullet)\leq c(1,\bullet)$. One key property of $\ell(1,\bullet)$
is that if $F|_{N}=r$ for a constant $r\in\RR$, then $\ell(1,F)=r$.
Thus, by definition of $H$ and Lemma \ref{lem:c geq 0}, we have
\begin{align*}
d_{\gamma}(\iota(s),\iota(s'))=c(1,(s-s')H)+c(1,(s'-s)H)\geq c(1,(s-s')H).
\end{align*}
and by the properties of $\ell(1,\bullet$), 
\begin{align*}
d_{\gamma}(\iota(s),\iota(s'))\geq\ell(1,(s-s')H)=s-s'.
\end{align*}
In an analogous fashion, we obtain $d_{\gamma}(\iota(s),\iota(s'))\geq s'-s$.
We can therefore conclude that 
\[
d_{\gamma}(\iota(s),\iota(s'))\geq\abs{s-s'}
\]
 as desired. This completes the proof.

Theorem \ref{thm:Embedding_R} immediately implies
\begin{cor}
\label{cor:cotangent_bundle}Let $N$ be a closed manifold. Then $\diam_{\gamma}(DT^{\ast}N)=+\infty$
. 
\end{cor}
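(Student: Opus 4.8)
The plan is to use the explicit embedding $\iota(s)=\varphi_{sH}$ constructed in the proof of Theorem~\ref{thm:Embedding_R}, after arranging for it to take values in $\Ham_c(DT^{\ast}N)$, together with the elementary fact that a metric space containing an isometric copy of $(\RR,d_{\text{st}})$ has infinite diameter.

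First I would inspect the choice of $H$ in the proof of Theorem~\ref{thm:Embedding_R}: the only constraints imposed are $H|_N=1$ and $0\le H\le 1$, so we are free to pick $H\in C_c^\infty(T^{\ast}N)$ supported in an arbitrarily small neighbourhood of the zero section $N$ --- e.g.\ inside the open radius-$\tfrac{1}{2}$ disk bundle --- which lies in the interior of $DT^{\ast}N$. Then every time-one map $\varphi_{sH}$ is compactly supported in the interior of $DT^{\ast}N$, and $s\mapsto\varphi_{sH}$ is a group homomorphism $\RR\to\Ham_c(DT^{\ast}N)$ (using $sH\#\overline{s'H}=(s-s')H$, as $H$ is autonomous).

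Next I would argue that the spectral norm does not see the ambient domain: for a Hamiltonian compactly supported in $DT^{\ast}N\subset T^{\ast}N$, the Frauenfelder--Schlenk invariant $c(1,\cdot)$ computed in the Liouville domain $DT^{\ast}N$ agrees with the one computed in any larger disk bundle, since both are extracted from the Floer complexes of the common completion $T^{\ast}N$. Consequently $\gamma(\varphi_{sH})$ is unchanged when $\varphi_{sH}$ is regarded as an element of $\Ham_c(DT^{\ast}N)$, and the estimate established in the proof of Theorem~\ref{thm:Embedding_R} --- combining $\gamma(\varphi_F)\le\norm{F}$, Lemma~\ref{lem:c geq 0}, and the Lagrangian spectral invariant $\ell(1,\cdot)$ relative to the zero section --- still yields $d_\gamma(\iota(s),\iota(s'))=\abs{s-s'}$, hence $\gamma(\varphi_{sH})=\abs{s}$ on taking $s'=0$.

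Finally, $\diam_\gamma(DT^{\ast}N)=\sup\{\gamma(\varphi)\mid\varphi\in\Ham_c(DT^{\ast}N)\}\ge\sup_{s\in\RR}\abs{s}=+\infty$, which is the claim. Equivalently, the entire proof of Theorem~\ref{thm:Embedding_R} runs verbatim with $DT^{\ast}N$ in place of $T^{\ast}N$, producing directly an isometric group embedding $(\RR,d_{\text{st}})\to(\Ham_c(DT^{\ast}N),d_\gamma)$; and one could instead invoke Proposition~\ref{prop:Viterbo_Lagrangian} with Theorem~\ref{thm:finite_diameter} (or Theorem~\ref{thm:B2}), since the zero section is a closed exact Lagrangian in $DT^{\ast}N$. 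The only point that is not purely formal is this compatibility of spectral invariants on $DT^{\ast}N$ and on $T^{\ast}N$ for Hamiltonians supported in the unit disk bundle; it is standard because both live over the same completion, but it is the step I would state most carefully.
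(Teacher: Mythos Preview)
Your proposal is correct and matches both approaches the paper gives. The paper simply asserts that the corollary ``immediately'' follows from Theorem~\ref{thm:Embedding_R}, and then remarks that one can alternatively bypass the Lagrangian spectral invariant entirely by noting that the zero section is a closed exact Lagrangian in $DT^{\ast}N$, so Proposition~\ref{prop:Viterbo_Lagrangian} gives $\SH^{\ast}(DT^{\ast}N)\neq 0$ and Theorem~\ref{thm:finite_diameter} (or Theorem~\ref{thm:B2}) concludes. You carry out both routes, and you are more explicit than the paper about the one genuine subtlety in the first route---that the embedding of Theorem~\ref{thm:Embedding_R} lands in $\Ham_c(T^{\ast}N)$ rather than $\Ham_c(DT^{\ast}N)$, and that one must choose $H$ supported in the unit disk bundle and observe that the spectral invariants coincide because both are computed on the common completion $\widehat{DT^{\ast}N}=T^{\ast}N$. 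This is exactly the right point to flag, and your justification is the standard one.
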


To prove Corollary \ref{cor:cotangent_bundle}, one does not need
the Lagrangian spectral norm. Indeed, since the zero section $N\subset DT^{\ast}N$
is an exact closed Lagrangian submanifold, Corollary \ref{cor:cotangent_bundle}
follows directly from Proposition \ref{prop:Viterbo_Lagrangian} and
Theorem \ref{thm:finite_diameter}.

\subsection{The spectral diameter of other symplectic manifolds}

It has been known for a
long time \cite{EnPo03} that for $(\mathbb{C}P^{n},\omega_{\text{FS}})$,
\begin{align*}
\diam_{\gamma}(\mathbb{C}P^{n})\leq\int_{\mathbb{C}P^{1}}\omega_{\text{FS}} & .
\end{align*}
The above upper bound was latter optimized by Kislev and Shelukhin
in \cite[Theorem G]{KiSh18} to 
\begin{align*}
\diam_{\gamma}(\mathbb{C}P^{n})=\frac{n}{n+1}\int_{\mathbb{C}P^{1}}\omega_{\text{FS}}.
\end{align*}
However, for a surface $\Sigma_{g}$ of genus $g\geq1$, the spectral
diameter is infinite. This case is covered by the following theorem
of Kislev and Shelukhin \cite[Theorem D]{KiSh18} which is a sharpening
of a result of Usher \cite[Theorem 1.1]{Us13}. 
\begin{thm}
\label{thm:KisShe}Let $(M,\omega)$ be a closed symplectic manifold
that admits an autonomous Hamiltonian $H\in C^{\infty}(M,\mathbb{{R}})$
such that 
\begin{lyxlist}{00.00.0000}
\item [{{\normalfont \textbf{U1}}}] all the contractible periodic orbits
of $X_{H}$ are constant.
\end{lyxlist}
Then $\diam_{\gamma}(M)=+\infty$. 
\end{thm}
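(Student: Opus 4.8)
The plan is to produce an explicit one-parameter subgroup of $\Ham(M)$ whose elements have arbitrarily large spectral norm. Since $H$ is autonomous, $s\mapsto\varphi_{sH}$ is a homomorphism $\RR\to\Ham(M)$ satisfying $\varphi_{sH}\circ\varphi_{s'H}^{-1}=\varphi_{(s-s')H}$, so $d_\gamma(\varphi_{sH},\varphi_{s'H})=\gamma(\varphi_{(s-s')H})$ and it is enough to show $\gamma(\varphi_{tH})\to+\infty$ as $t\to+\infty$. We may assume $H$ is non-constant, since otherwise $X_H\equiv 0$ and the hypothesis carries no content.

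The first step is to understand the filtered Floer cohomology of the Hamiltonian $tH$ for every $t>0$. A $T$-periodic orbit of $X_H$ is the same as a $1$-periodic orbit of $X_{TH}$, so hypothesis \textbf{U1} guarantees that for every $t>0$ the only contractible $1$-periodic orbits of $X_{tH}$ are the constant ones at the critical points of $H$. After a fixed $C^2$-small autonomous perturbation supported near $\Crit(H)$ these become nondegenerate, without creating any new non-constant contractible orbit, and $c(1,tH)$ is recovered as the limit of the spectral invariants of the perturbations. The structural heart of the proof is the claim that, precisely because no non-constant contractible periodic orbit ever appears, the filtered Floer complex of $tH$ coincides, as a filtered complex, with the Morse complex of $tH$: its generators are always $\Crit(H)$ with actions close to $-tH(p)$, and --- via crossing-energy estimates in the spirit of Ginzburg--G\"urel --- no Floer cylinder alters the relevant part of the differential. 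Equivalently, for $\lambda$ a regular value, the filtered Floer homology of $tH$ below level $\lambda$ is the homology of the sublevel set $\{tH<\lambda\}$, uniformly in $t$.

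Granting this, the conclusion is immediate. From the topology of the sublevel sets of $tH$ one reads off that $c(1,tH)$ is linear in $t$ and that $c(1,tH)+c(1,-tH)$ equals $\osc(tH)=t\,\osc(H)$ (here one uses that the sublevel set carries no class onto the fundamental class of $M$ until it is all of $M$, and surjects onto $H_0(M)$ as soon as it is non-empty; since $X_{-H}=-X_H$ has the same periodic orbits, \textbf{U1} applies to $-H$ as well, so the same analysis is valid for $-tH$). As $\overline{tH}=-tH$ for the autonomous $tH$, this gives $\gamma(\varphi_{tH})=c(1,tH)+c(1,\overline{tH})=t\,\osc(H)$, which tends to $+\infty$ because $H$ is non-constant; hence $\diam_\gamma(M)=+\infty$.

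The main obstacle is exactly the structural claim of the second paragraph: identifying, uniformly in $t$, the filtered Floer cohomology of the large Hamiltonian $tH$ with the topology of the sublevel sets of $tH$. For $C^2$-small Hamiltonians this is the classical Floer $=$ Morse comparison, but for large $t$ the perturbed holomorphic cylinders genuinely differ from negative gradient trajectories; what rescues the argument is that \textbf{U1} forbids new periodic orbits at every slope, which pins down the generators of the complex at all scales, after which a crossing-energy argument controls the filtration behaviour of the differential. An alternative that avoids computing the exact slope $\osc(H)$ is to combine the inequality between the boundary depth $\beta$ and the spectral norm with $\beta(\varphi_{tH})=t\,\beta(H)\to+\infty$ --- a non-constant Morse function has a finite bar of positive length --- but this still rests on the very same control of the barcode of $tH$.
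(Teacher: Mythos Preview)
The paper does not prove this theorem; it is quoted from Kislev--Shelukhin \cite{KiSh18} (as a sharpening of Usher \cite{Us13}) purely as background for the discussion of closed manifolds, so there is no argument in the paper to compare yours against.

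On the substance of your sketch: the strategy and the target formula $\gamma(\varphi_{tH})=t\,\osc(H)$ are correct, but the route you propose --- a filtered identification ``Floer $=$ Morse for all $t$'' obtained by controlling the differential through crossing-energy estimates --- is heavier than what is needed, and you rightly flag it as the real obstacle. One can bypass the differential entirely. Under \textbf{U1} the contractible $1$-periodic orbits of $tH$ are exactly the constants at $\Crit(H)$, so $\Spec(tH)=\{-tH(p):p\in\Crit(H)\}$ for every $t>0$; spectrality places $c(\alpha,tH)$ in this set, and Lipschitz continuity in $t$ then forces $t\mapsto c(\alpha,tH)/t$ to be a continuous function into the \emph{finite} set $\{-H(p):p\in\Crit(H)\}$, hence constant on $(0,\infty)$. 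That constant is read off in the genuinely $C^2$-small regime, where the Floer and Morse complexes do agree, giving $c(1,tH)=-t\min H$ and $c(1,-tH)=t\max H$ with no analysis of large-$t$ Floer cylinders at all.

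Your alternative via boundary depth contains a separate gap: the assertion that ``a non-constant Morse function has a finite bar of positive length'' is false --- a perfect Morse function has vanishing Morse differential, hence only infinite bars and $\beta=0$, so $\beta(\varphi_{tH})=0$ for all $t$ and the inequality $\gamma\geq\beta$ gives nothing. The boundary-depth route thus requires an additional hypothesis on $H$ that the direct spectral-invariant argument above does not.
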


Theorem \ref{thm:KisShe} allows one to prove that the spectral diameter
is infinite in many cases. A list of examples in which condition \textbf{U1}
holds can be found in \cite[Section 1]{Us13}. As mentioned above,
surfaces of positive genus satisfy \textbf{U1}. Also, if $(N,\omega_{N})$
satisfies \textbf{U1} then so does $(M\times N,\omega_{M}\oplus\omega_{N})$
for any other closed symplectic manifold $(M,\omega_{M})$. 

In \cite{Ka20}, Kawamoto proves that the spectral diameter of the
quadrics $Q^{2}$ and $Q^{4}$ (of real dimension 4 and 8 respectively)
and certain stabilizations of them is infinite.

\subsubsection{Symplectically aspherical manifolds\label{subsec:Symplectically-aspherical-manifo}}

Recall that a symplectic manifold $(M,\omega_{M})$ is symplectically
aspherical if both $\omega_{M}$ and the first Chern class $c_{1}(M)$
of $M$ vanish on $\pi_{2}(M)$, namely, for every continuous map
$f:S^{2}\to M$,
\[
\langle[\omega_{M}],f_{\ast}[S^{2}]\rangle=0=\langle c_{1}(M),f_{\ast}[S^{2}]\rangle.
\]
An open subset $U\subset M$ is said to be incompressible if the map
$\pi_{1}(U)\to\pi_{1}(M)$ induced by the inclusion is injective. 

As pointed out in \cite{BuHuSe21}, it has been conjectured that $\diam_{\gamma}(M)=+\infty$
on all closed symplectically aspherical manifolds. Here, we prove
that conjecture in the case of the twisted product $(M\times M,\omega\oplus-\omega)$
of a closed symplectically aspherical manifold $(M,\omega)$ with
itself. But first, a more general result. 

\begin{customprop}{D}\label{prop:aspherical_inf}
	Let $(M,\omega)$ be a closed symplectically aspherical manifold of dimension $2n$. Suppose there exists an incompressible Liouville domain $D$ of codimension $0$ embedded inside $M$ with $SH^\ast(D)\neq 0$. Then, $\diam_\gamma(M)=+\infty$.
\end{customprop}
\begin{proof}
Let $H$ be a compactly supported Hamiltonian in $D$ and denote by
$\iota:D\to M$ the embedding. By a cohomological analogue of \cite[Claim 5.2]{GaTa20},
we have that

\begin{align*}
c_{D}(\beta,H)=\max_{\substack{\alpha\in\HH^{\ast}(M)\\
\iota^{\ast}(\alpha)=\beta
}
}c_{M}(\alpha,H)
\end{align*}
for all $\beta\in\HH^{\ast}(D)$ where $c_{D}$ and $c_{M}$ are the
spectral invariants on $D$ and $M$ respectively. In particular,
we know that the unit $1_{M}\in\HH^{\ast}(M)$ is sent to the unit
$1_{D}\in\HH^{\ast}(D)$ under the map $\iota^{\ast}:\HH^{\ast}(M)\to\HH^{\ast}(D)$.
Moreover, it is well known that the spectral invariant with respect
to the unit can be implicitly written as
\[
c_{M}(1_{M},H)=\max_{\alpha\in\HH^{\ast}(M)}c_{M}(\alpha,H)
\]
(see Lemma \ref{lem:Def_implicite_c(1,H)}). Therefore, fixing $\beta=1_{D}$,
we have
\begin{align*}
c_{D}(1_{D},H)=c_{M}(1_{M},H).
\end{align*}
Using Theorem \ref{thm:finite_diameter}, the above equation thus
yields the desired result. 
\end{proof}
\begin{customcor}{E}\label{prop:Closed_unbounded}
	Let $(M,\omega)$ be a closed symplectically aspherical manifold. Then, $\diam_\gamma(\Ham(M\times M,\omega\oplus -\omega))=+\infty$.
\end{customcor}
\begin{proof}
Consider the closed Lagrangian given by the diagonal $L=\Delta$ inside
$(M\times M,\omega_{M}\oplus-\omega_{M})$. In virtue of the Weinstein
neighborhood theorem, there exists an open neighborhood $U$ of $L$
and a symplectomorphism $\psi:U\to D_{\varepsilon}T^{\ast}L$ such
that $\varphi(L)$ coincides with the zero section of an $\epsilon$-radius
codisk bundle $D_{\varepsilon}T^{\ast}L$ over $L$. The Liouville
structure on $D_{\varepsilon}T^{\ast}L$ pulls back to a Liouville
structure on $U$. Note that, inside $M\times M$, $L$ is incompressible,
i.e. the map $\pi_{1}(L)\to\pi_{1}(M\times M)$ of first homotopy
groups induced by the inclusion $L\to M\times M$ is injective. Therefore,
by homotopy equivalence, $U$ and $D_{\varepsilon}T^{\ast}L$ are
also incompressible. The desired result follows directly from Proposition
\ref{prop:aspherical_inf}.
\end{proof}

\subsection{Hofer geometry}
\label{sec:Hofer geometry}

As hinted at above, the finiteness of the spectral diameter plays
a role in Hofer geometry. In particular, it can be used to study the following question posed by Le Roux in \cite{Le10}:

\vspace{10pt}\noindent \textbf{Question 1.} \label{que:Question 1}For
any $A>0$, let 
\[
E_{A}(M,\omega):=\ens{\varphi\in\Ham(M,\omega)\mid d_{H}(\text{Id},\varphi)>A}
\]
be the complement of the closed ball of radius $A$ in Hofer's metric.
For all $A>0$, does $E_{A}(M,\omega)$ have non-empty $C^{0}$-interior?

\vspace{10pt}

\noindent Indeed, in the case of closed symplectically aspherical
manifolds with infinite spectral diameter, a positive answer to Question
1 was given by Buhovsky, Humili\`ere and Seyfaddini (see also \cite{Ka19,Ka20}
for the positive and negative monotone cases).
\begin{thm}[\cite{BuHuSe21}]
\label{thm:BuHuSe21} Let $(M,\omega)$ be a closed, connected and
symplectically aspherical manifold. If $\diam_{\gamma}(M)=+\infty$,
then $E_{A}(M,\omega)$ has non-empty $C^{0}$-interior for all $A>0$.
\end{thm}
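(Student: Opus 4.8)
The plan is to identify a non-empty $C^{0}$-open subset of $E_{A}(M,\omega)$, namely the superlevel set $\ens{\varphi\in\Ham(M,\omega)\mid\gamma(\varphi)>A}$; its non-emptiness will then force $E_{A}(M,\omega)$ to have non-empty $C^{0}$-interior. That this superlevel set lies in $E_{A}(M,\omega)$ is the elementary comparison $\gamma\leq d_{H}$: for any Hamiltonian $H$ generating $\varphi$ one has $c(1,H)\leq\int_{0}^{1}\max_{M}H_{t}\,\dd t$ and, applying the same estimate to a generator $\overline{H}$ of $\varphi^{-1}$, $c(1,\varphi^{-1})\leq-\int_{0}^{1}\min_{M}H_{t}\,\dd t$; adding these and taking the infimum over all $H$ generating $\varphi$ gives $\gamma(\varphi)\leq d_{H}(\text{Id},\varphi)$. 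Non-emptiness of $\ens{\gamma>A}$ is, for every $A>0$, precisely the hypothesis $\diam_{\gamma}(M)=+\infty$, so the whole proof reduces to showing that $\ens{\gamma>A}$ is $C^{0}$-open.

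For this I would invoke the one genuinely nontrivial input: on a closed symplectically aspherical manifold the spectral norm $\gamma$ is continuous at the identity for the $C^{0}$ topology, i.e. for every $\varepsilon>0$ there is a $C^{0}$-neighborhood $U$ of $\text{Id}$ in $\Ham(M,\omega)$ with $\gamma|_{U}<\varepsilon$ (this is the analytic heart of \cite{BuHuSe21}). Granting it, fix $\varphi$ with $\gamma(\varphi)>A$, put $\varepsilon=\gamma(\varphi)-A>0$, and choose such a $U$. Then $\varphi U$ is a $C^{0}$-neighborhood of $\varphi$, because left translation by $\varphi$ is a $C^{0}$-homeomorphism of $\Ham(M,\omega)$ (here one uses that $\varphi$ is uniformly continuous on the compact manifold $M$). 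For $\psi=\varphi\comp u$ with $u\in U$, the triangle inequality for the bi-invariant distance $d_{\gamma}$ together with conjugation invariance of $\gamma$ yields
\[
\gamma(\varphi)\leq\gamma(\psi)+\gamma(\psi\comp\varphi^{-1})=\gamma(\psi)+\gamma(\varphi\comp u\comp\varphi^{-1})=\gamma(\psi)+\gamma(u)<\gamma(\psi)+\varepsilon,
\]
hence $\gamma(\psi)>A$. Therefore $\varphi U\subseteq\ens{\gamma>A}\subseteq E_{A}(M,\omega)$, and $E_{A}(M,\omega)$ has non-empty $C^{0}$-interior.

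I expect the only real obstacle to be the $C^{0}$-continuity of $\gamma$ at the identity: this is where the hypothesis of symplectic asphericity enters in an essential way, and it is a substantial theorem in $C^{0}$ symplectic topology rather than a formal manipulation. Everything else---the inequality $\gamma\leq d_{H}$, the bi-invariance and conjugation invariance of $\gamma$, and the fact that left and right translations are homeomorphisms of $\Ham(M,\omega)$ for the $C^{0}$ topology---is routine.
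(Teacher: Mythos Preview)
The paper does not prove this statement at all: Theorem~\ref{thm:BuHuSe21} is quoted verbatim from \cite{BuHuSe21} and used as a black box to derive Corollary~F. There is therefore no ``paper's own proof'' to compare your proposal against.

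That said, your sketch is a faithful reconstruction of how the statement follows from the main result of \cite{BuHuSe21}. You correctly isolate the one substantial ingredient---the $C^{0}$-continuity of $\gamma$ at the identity on closed symplectically aspherical manifolds---and correctly reduce everything else to it via the Hofer bound $\gamma\leq d_{H}$, bi-invariance, and conjugation invariance. The manipulation with the triangle inequality is fine (note that $\gamma(\psi\varphi^{-1})=\gamma(\varphi\psi^{-1})$ by symmetry, so either order works). Your honest acknowledgment that the $C^{0}$-continuity step is a deep theorem rather than a formality is exactly right: that theorem is the whole content of \cite{BuHuSe21}, and the present paper makes no attempt to reprove it.
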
 

Using Theorem \ref{thm:BuHuSe21} in conjunction with Corollary \ref{prop:Closed_unbounded},
we directly obtain the following answer to Question \ref{que:Question 1}
in the specific setting of Corollary \ref{prop:Closed_unbounded}.

\begin{customcor}{F}\label{prop:EA_product}
	Let $(M,\omega)$ be a closed symplectically aspherical manifold. Then, $E_A(M\times M,\omega\oplus-\omega)$ has a non-empty $C^0$-interior for all $A>0$.
\end{customcor}

\subsubsection*{Acknowledgements}

This research is a part of my PhD thesis at the Université de Montréal
under the supervision of Egor Shelukhin. I thank him for proposing
this project and outlining the approach used to carry it out in this
paper. I am deeply indebted to him for the countless valuable discussions
we had regarding spectral invariants and symplectic cohomology. I
would also like to thank Octav Cornea and François Lalonde for their
comments on an early draft of this project. I thank Leonid Polterovich, Felix Schlenk and Shira Tanny for their comments which helped improve the exposition. Finally, I am grateful to Marcelo Atallah, Filip Brocic, François Charette, Jean-Philippe Chassé, Dustin Connery-Grigg, Jonathan Godin, Jordan Payette and Dominique Rathel-Fournier for fruitful conversations. This research was partially
supported by Fondation Courtois.

\section{Liouville domains and admissible Hamiltonians}

In this subsection we recall the definition of Liouville domains,
specify the class of Hamiltonians we will restrict our attention to
and describe how their Floer trajectories behave at infinity.

\subsection{Completion of Liouville domains\label{subsec:Completion-of-Liouville}}

A Liouville domain $(D,\dd\lambda,Y)$ is an exact symplectic manifold
with boundary on which the vector field $Y$, defined by $Y\iprod\dd\lambda=\lambda$
and called the Liouville vector field, points outwards along $\partial D$.
Denote by $\hat{D}=D\cup[1,\infty)\times\partial D$ the completion
of $D$ and $(r,x)$ the coordinates on $[1,\infty)\times\partial D$.
Here, we glue $\partial D$ and $\ens{1}\times\partial D$ with respect
to the reparametrization $\psi_{Y}^{\ln r}$ of the Liouville flow
generated by $Y$. Given $\delta>0$, let
\begin{align*}
D^{\delta} & =\psi_{Y}^{\ln\delta}(D)=D\setminus(\delta,\infty)\times\partial D.
\end{align*}
We extend the Liouville form $\lambda$ to $\hat{D}$ by defining
$\hat{\lambda}:T\hat{D}\to\RR$ as 
\begin{align*}
\hat{\lambda}\mid_{D}\,=\lambda & \quad\text{and}\quad\hat{\lambda}\mid_{\hat{D}\setminus D}\,=r\alpha
\end{align*}
where $\alpha=\lambda|_{\partial D}$. The cylindrical portion $[1,\infty)\times\partial D$
of $\hat{D}$ is thus equipped with the symplectic form $\omega=\dd(r\alpha)$. 

\begin{figure}[h]
\begin{centering}
\includegraphics{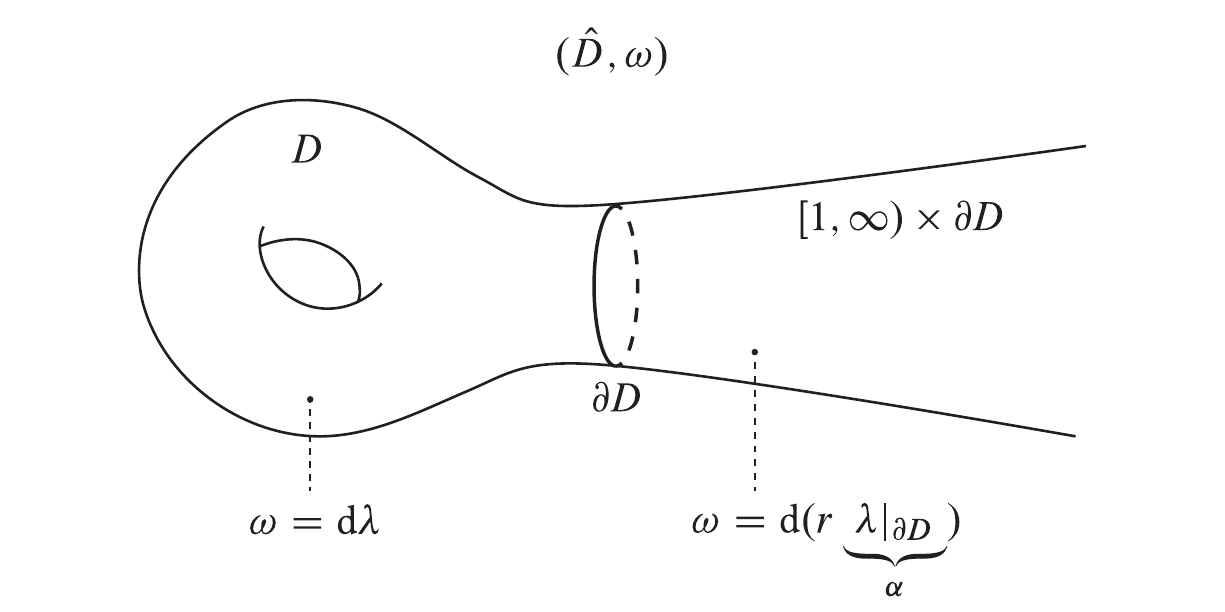}
\par\end{centering}
\caption{A Liouville domain.}
\end{figure}
\noindent The skeleton $\Sk(D)$ of $(D,\dd\lambda,Y)$ is defined
by 
\begin{align*}
\Sk(D)=\bigcap_{0<r<1}\psi_{Y}^{\ln r}(D).
\end{align*}
Denote by $R_{\alpha}$ the Reeb vector field on $\partial D$ associated
to $\alpha$, meaning
\begin{align*}
R_{\alpha}\iprod\dd\alpha=0,\quad & \alpha(R)=1.
\end{align*}
 We define $\Spec(\partial D,\alpha)$ to be the set of periods of
closed characteristics, the periodic orbits generated by $R_{\alpha}$,
on $\partial D$ and put 
\[
T_{0}=\min\Spec(\partial D,\lambda).
\]
As a subset of $\RR$, $\Spec(\partial D,\alpha)$ is known to be
closed and nowhere dense. For any $A\in\RR$, let $\eta_{A}$ denote
the distance between $A$ and $\Spec(\partial D,\lambda)$. 

\subsection{Admissible Hamiltonians and almost complex structures\label{subsec:Admissible-Hamiltonians-and}}

\subsubsection{Periodic orbits and action functional}

Given a Hamiltonian $H:S^{1}\times\hat{D}\to\RR$, one defines its
time-dependent Hamiltonian vector field $X_{H}^{t}:\hat{D}\to T\hat{D}$
by 
\begin{align*}
X_{H}^{t} & \iprod\omega=-dH_{t}
\end{align*}
where $H_{t}(p)=H(t,p)$. We denote by $\varphi_{H}^{t}:\hat{D}\to\hat{D}$
the flow generated by $X_{H}^{t}$. The set of all contractible 1-periodic
orbits of $\varphi_{H}^{t}$ is denoted by $\mathcal{P}(H)$. An orbit
$x\in\mathcal{P}(H)$ is said to be \emph{non-degenerate} if 
\begin{align*}
\det(\id-d_{x(0)}\varphi_{H}^{1})\neq0
\end{align*}
and \emph{transversally non-degenerate }if the eigenspace associated
to the eigenvalue 1 of the map $d_{x(0)}\varphi^{1}$ is of dimension
1. If all elements of $\mathcal{P}(H)$ are non-degenerate or transversally
non-degenerate, we say that $H$ is regular. 

Let $\mathcal{L}\hat{D}$ be the space of contractible loops in $\hat{D}.$
For a Hamiltonian $H:S^{1}\times\hat{D}\to\RR$ , the \emph{Hamiltonian
action functional} $\mathcal{A}_{H}:\mathcal{L}\hat{D}\to\RR$ associated
to $H$ is defined as 
\begin{align*}
\mathcal{A}_{H}(x)= & \int_{0}^{1}x^{\ast}\hat{\lambda}-\int_{0}^{1}H_{t}(x(t))\:\dd t.
\end{align*}
It is well known that the elements elements of $\mathcal{P}(H)$ correspond
to the critical points of $\mathcal{A}_{H}$, see \cite[section 6]{AuDa14}.
The image of $\mathcal{P}(H)$ under the Hamiltonian action functional
is called the \emph{action spectrum of $H$} and is denoted by $\Spec(H)$. 

\subsubsection{Admissible Hamiltonians}

The completion of a Liouville domain is obviously non-compact. We
thus need to control the behavior at infinity of Hamiltonians we use
in order for them to have finitely many 1-periodic contractible orbits. 
\begin{defn}
\label{def:Affine_hamiltonian}Let $r_{0}>1$. A Hamiltonian $H$
is $r_{0}$-admissible if 

\begin{itemize}
\item $H(t,x,r)=h(r)$ on $\hat{D}\setminus D$,
\item $h(r)$ is $C^2$-small on $(1,r_0)$,
\item $h(r) = \tau_H r-\tau_Hr_0$ on $(r_0,+\infty)$ for $\tau_H\in(0,\infty)\setminus\Spec(\del D, \alpha)$,
	\item $H$ is regular.
\end{itemize}We denote the set of such Hamiltonians $\mathcal{H}_{r_{0}}$. 

\begin{figure}[H]
\begin{centering}
\includegraphics{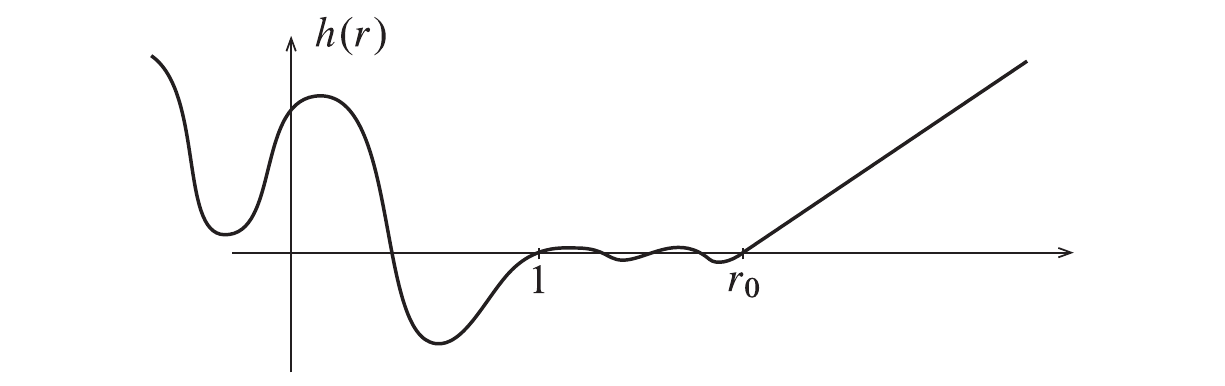}
\par\end{centering}
\caption{An $r_{0}$-admissible Hamiltonian.}
\end{figure}

\noindent We will also consider the set $\mathcal{H}_{r_{0}}^{0}\subset\mathcal{H}_{r_{0}}$
of $r_{0}$-admissibe Hamiltonians which are negative on $D$. In
some cases, it is not necessary to specify $r_{0}$ as long as it
is greater than $1$. For that purpose, we define 
\begin{align*}
\mathcal{H}=\bigcup_{r_{0}>1}\mathcal{H}_{r_{0}},\quad\mathcal{H}^{0}=\bigcup_{r_{0}>1}\mathcal{H}_{r_{0}}^{0}.
\end{align*}
\end{defn}

\begin{rem}
\label{rem:1-per_orb_S1_fam}Suppose $H\in\mathcal{H}$. If $x\in\mathcal{P}(H)\cap\hat{D}\setminus D$
is non constant, then it is necessarily transversally non-degenerate.
Indeed, since $H$ is time-independent there by definition, for any
$c\in\RR$, $x(t-c)$ is also a 1-periodic orbit of $H$. 
\end{rem}

\begin{lem}
\label{lem:Finite_periodic_orbits}If $H\in\mathcal{H}$, then $\abs{\mathcal{P}(H)}$
consists of a finite number of periodic orbits and $S^{1}$ families
of periodic orbits.
\end{lem}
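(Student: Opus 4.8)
The plan is to show that the 1-periodic orbits of an admissible Hamiltonian $H\in\mathcal{H}$ localize into two regions --- the compact core $D^{r_0}$ where $H$ is only $C^2$-small, and the cylindrical end $[r_0,\infty)\times\partial D$ where $H$ is exactly linear in $r$ --- and in each region the orbits come in finitely many points or $S^1$-families. First I would treat the end. On $(r_0,\infty)\times\partial D$ we have $H(t,x,r)=\tau_H r-\tau_H r_0$, so $X_H$ is a positive multiple (by $\tau_H$) of the Reeb vector field $R_\alpha$ on the slices $\{r\}\times\partial D$; hence a 1-periodic orbit here would be a closed Reeb orbit of period $\tau_H$ up to reparametrization, i.e.\ would force $\tau_H\in\Spec(\partial D,\alpha)$. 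Since admissibility requires $\tau_H\notin\Spec(\partial D,\alpha)$, there are \emph{no} 1-periodic orbits in $\{r>r_0\}$. (One must also note orbits cannot stray out to infinity: the flow preserves the slices $\{r=\text{const}\}$ on the linear part, and a contractible loop there projecting to a non-contractible, non-constant Reeb path is impossible, while a constant loop in $\{r>r_0\}$ is not a critical point since $h'(r)=\tau_H\neq0$.)

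Next I would handle the region $(1,r_0)\times\partial D$ where $H=h(r)$ with $h$ being $C^2$-small: here $X_H=h'(r)R_\alpha$, and by $C^2$-smallness $|h'(r)|<T_0=\min\Spec(\partial D,\alpha)$, so again any 1-periodic orbit would be a closed Reeb orbit of period $|h'(r)|<T_0$, which is impossible unless $h'(r)=0$; and $h'(r)=0$ gives a whole slice $\{r\}\times\partial D$ of fixed points, which for generic (regular) $h$ we can arrange to be an isolated Morse--Bott situation --- but more cleanly, one absorbs this into the collar of $D$ and uses regularity of $H$ to say these critical slices are transversally non-degenerate, hence (by compactness of $\{r_1\le r\le r_0\}\times\partial D$) there are only finitely many such $S^1$-families. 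For the core $D^{r_0}$ (which is compact), every 1-periodic orbit is either non-degenerate or, by Remark \ref{rem:1-per_orb_S1_fam}, transversally non-degenerate; the transversally non-degenerate ones lie in $S^1$-families (generated by the time-translation symmetry where $H$ is autonomous there), and each such family is isolated by the non-degeneracy condition. Then I would invoke a standard compactness argument: the union $\mathcal{P}(H)$ sits inside the compact set $D^{r_0}$ (after the above argument rules out the end), it is a closed set (critical points of the continuous functional / zeros of a smooth section), and each point or $S^1$-family is isolated by non/transversal-non-degeneracy, so $\mathcal{P}(H)$ consists of finitely many points and finitely many $S^1$-families.

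The main obstacle I expect is the careful bookkeeping at the interface $r=r_0$ and on the ``$C^2$-small'' piece $(1,r_0)$: one needs the precise meaning of ``$C^2$-small'' (small enough that $|h'|$ stays below $T_0$ and that regularity can be achieved by a small perturbation) and one must make sure that orbits do not jump between regions --- but since on the whole cylindrical part $\hat D\setminus D$ the Hamiltonian depends only on $r$, the $r$-coordinate is preserved by the flow there, so an orbit meeting $\{r\ge 1\}$ stays at a single value of $r$ and the slice-by-slice Reeb analysis applies uniformly. The only genuinely delicate point is ensuring regularity forces isolatedness of the $S^1$-families rather than a continuum of them; this follows because a transversally non-degenerate $S^1$-family is a non-degenerate critical submanifold of $\mathcal{A}_H$, hence isolated among critical points, and then compactness of $D^{r_0}$ finishes the count. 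I would also remark that constant orbits in $D^{r_0}$ are precisely the critical points of $H|_{D^{r_0}}$, which are finite and non-degenerate by regularity.
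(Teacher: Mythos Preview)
Your overall strategy---compute $X_H=h'(r)R_\alpha$ on the cylinder, rule out orbits on $(r_0,\infty)$ via $\tau_H\notin\Spec(\partial D,\alpha)$, and then invoke compactness plus regularity on $D^{r_0}$---is sound and reaches the same conclusion as the paper. The closing paragraph (orbits confined to the compact $D^{r_0}$, each isolated as a point or $S^1$-family by regularity, hence finitely many) is a valid self-contained argument.

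Where you diverge from the paper is in the treatment of $(1,r_0)\times\partial D$. You read ``$C^2$-small'' as forcing $|h'(r)|<T_0$ throughout $(1,r_0)$, hence no non-constant orbits there. The paper does \emph{not} argue this way: it allows $h'$ to sweep through the interval $(0,\tau_H)$ (the smooth transition to slope $\tau_H$ must happen somewhere), and each value $r$ with $h'(r)\in\Spec(\partial D,\alpha)$ produces a genuine $S^1$-family of non-constant orbits corresponding to a Reeb orbit of that period. The paper then appeals to discreteness of the spectrum in $(0,\tau_H)$ to get finitely many such families. These $S^1$-families are not incidental---they are exactly the generators later broken into pairs when building the Floer complex---so your reading, while consistent with one parsing of the definition, misses where the $S^1$-families in the lemma's statement actually live.

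Your handling of the case $h'(r)=0$ is also muddled: a slice $\{r\}\times\partial D$ of fixed points is a $(2n-1)$-dimensional family, not an $S^1$-family, and ``transversally non-degenerate'' does not apply to it. The clean observation is that such a slice would consist of degenerate constant orbits (the linearized return map is the identity), directly violating regularity; so $h'(r)\neq 0$ on $(1,r_0)$ and the issue evaporates. That said, since your final compactness-plus-regularity argument on $D^{r_0}$ does not depend on this intermediate analysis, the proof survives---it is just less explicit than the paper's about the geometric origin of the $S^1$-families.
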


\begin{proof}
Since $D$ is compact, there is a finite number of 1-periodic orbits
of $H$ inside it. 

Next, we look at the elements of $\mathcal{P}(H)$ which sit inside
$\hat{D}\setminus D$. On this subset of $\hat{D}$, we know that
$H=h(r)$ and $\omega=\dd\hat{\lambda}$. Therefore, on $\hat{D}\setminus D$
\begin{align*}
X_{H}\iprod\omega & =X_{H}\iprod(\dd r\wedge\alpha+r\dd\alpha)\\
 & =\dd r(X_{H})\alpha-\alpha(X_{H})\dd r+rX_{H}\iprod\dd\alpha
\end{align*}
and $\dd H=h'(r)\dd r$. Hamilton's equation thus yields 
\begin{align*}
\dd r(X_{H})=0=X_{H}\iprod\dd\alpha,\quad\alpha(X_{H})=h'(r).
\end{align*}
The three equations above imply the following two facts, 
\begin{itemize}
\item on $\hat{D}\setminus D$, $X_{H}=h'(r)R_{\alpha}$;
\item if $x\in\mathcal{P}(H)$ is such that $x\cap\hat{D}\setminus D\neq\varnothing$,
then $x\subset\ens{r}\times\partial D$ for some $r>1$.
\end{itemize}
We conclude that a 1-periodic orbit $x$ of $H$ which lies inside
$\{r\}\times\partial D$ corresponds to a Reeb orbit of period $h'(r)$.
Notice that since $\tau_{H}\notin(0,+\infty)\cap\Spec(\partial D,\alpha)$,
$\mathcal{P}(H)\cap(\hat{D}\setminus D)=\mathcal{P}(H)\cap(D^{r_{0}}\setminus D)$.
Moreover, $\Spec(\partial D,\alpha)$ being nowhere dense and closed
in $\RR$, we have that $\Spec(\partial D,\alpha)\cap(1,r_{0})$ is
a finite set. We can therefore conclude, since every non-constant
1-periodic orbit of $H$ in $\hat{D}\setminus D$ is transversally
non-degenerate, that $\mathcal{P}(H)\cap(\hat{D}\setminus D)$ consists
of a finite number of $S^{1}$ families of periodic orbits. 
\end{proof}
\begin{rem}
\label{rem:Action_formula_cylindrical_part}The fact that admissible
Hamiltonians are radial on the cylindrical part of $\hat{D}$ allows
us to express the action of the 1-periodic orbits inside $\hat{D}\setminus D$
in terms of that radial function. To see this, we fix $H\in\mathcal{H}$
and compute the action of a non constant orbit $x\in\mathcal{P}(H)\cap(\hat{D}\setminus D)$
which we suppose lies inside $\{r\}\times\partial D$ for $r>1$:
\begin{align*}
\mathcal{A}_{H}(x) & =\int_{0}^{1}x^{\ast}\hat{\lambda}-\int_{0}^{1}H\comp x\:\dd t\\
 & =\int_{0}^{1}r\alpha(X_{H})\dd t-\int_{0}^{1}h(r)\dd t=rh'(r)-h(r).
\end{align*}
The function $A_{H}(r)=rh'(r)-h(r)$ on the right hand side of the
above equation has a nice geometric interpretation. On the graph of
$h$, $A_{H}(r')$ corresponds to minus the $y$-coordinate of the
intersection of the tangent at the point $(r',h(r'))$ and the $y$-axis. 
\end{rem}

\begin{figure}[H]

\begin{centering}
\includegraphics{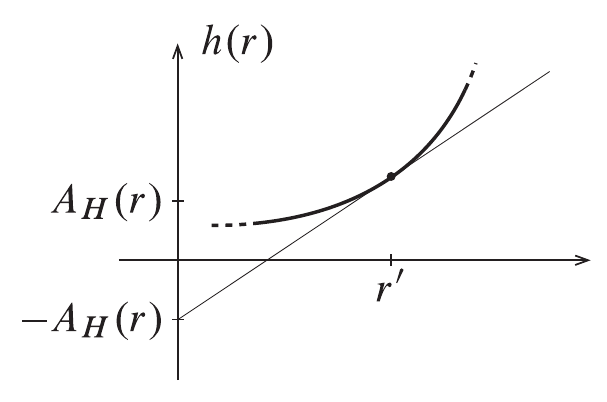}
\par\end{centering}
\caption{Action value of a periodic orbit contained in $\{r'\}\times\partial D$.}
\end{figure}

\subsubsection{Monotone homotopies}

We will need to also restrict the types of Hamiltonian homotopies
we consider to the following class. 
\begin{defn}
Let $H_{s}=\ens{H_{s}}_{s\in\RR}$ be a smooth homotopy from $H_{+}\in\mathcal{H}_{r_{0}}$
to $H_{-}\in\mathcal{H}_{r_{0}'}$ We say that $H_{s}$ is a \emph{monotone
homotopy }if the following conditions hold 

\begin{itemize}
	\item  $\exists S>0$ such that $H_{s'}=H_-$ for $s'<-S$ and $H_{s'}=H_+$ for $s'>S$, 
	\item $H_s = h_s(r)$ on $\hat{D}\setminus D$,
	\item for $R=\max \{r_0,r_0'\}$, $h_s(r) = \tau_sr+\eta_s$ on $(R,+\infty)$ for smooth functions $\tau_s$ and $\eta_s$ of $s$ and $\partial_s \tau_s\leq 0$.
\end{itemize}
\end{defn}

For $H_{+}\in\mathcal{H}_{r_{0}}$ and $H_{-}\in\mathcal{H}_{r_{0}'}$
with $\tau_{H_{+}}=\tau_{+}\leq\tau_{-}=\tau_{H_{-}}$, we can explicitly
construct a monotone homotopy in the following way. Fix a positive
constant $S>0$. Let $\beta:\RR\to[0,1]$ be a smooth function such
that $\beta(s)=0$ for $s\leq-S$, $\beta(s)=1$ for $s\geq S$ and
$\beta'(s)>0$ for all $s\in(-S,S)$. Define 
\begin{align*}
H_{s} & =H_{-}+\beta_{s}(H_{+}-H_{-}).
\end{align*}
For $R=\max\ens{r_{0},r_{0}'}$ we have, on $\hat{D}\setminus D^{r_{0}}$,
\begin{align*}
H_{s}(t,r,p) & =(\beta(s)(\tau_{+}-\tau_{-})+\tau_{-})r+\beta(s)(\eta_{+}-\eta_{-})+\eta_{-}
\end{align*}
and $\partial_{s}\tau_{s}\leq0$ as desired. 

\subsubsection{Admissible almost complex structures}

Let $J$ be an almost complex structure on $\hat{D}$. Recall that
$J$ is $\omega$-compatible if the map $g_{J}:TM\otimes TM\to\RR$
defined by 
\begin{align*}
g_{J}(v,w) & =\omega(v,Jw)
\end{align*}
is a Riemannian metric. To control the behavior of $\omega$-compatible
almost complex structures at infinity, we make the following definition. 
\begin{defn}
Let $J$ be an $\omega$-compatible almost complex structure on $\hat{D}$.
We say that $J$ is \emph{admissible} if $J_{1}=J|_{\hat{D}\setminus D}$
is of \emph{contact type}. More specifically, we ask that 
\begin{align*}
J_{1}^{\ast}\hat{\lambda} & =\dd r.
\end{align*}
We denote the set of such almost complex structures by $\mathcal{J}$.
A pair $(H,J)$ where $H\in\mathcal{H}_{r_{0}}$ and $J\in\mathcal{J}$
is called an \emph{$r_{0}$-admissible pair}. 
\end{defn}

\subsection{Floer trajectories and maximum principle. }

\noindent In this subsection, we recall some analytical aspects of
Floer theory on Liouville domains. Issues regarding transversality
will be dealt with in the next section. 

\subsubsection{Floer trajectories}

Consider an Hamiltonian $H:S^{1}\times\hat{D}\to\RR$ and two 1-periodic
orbits $x_{\pm}\in\mathcal{P}(H)$. Let $J$ be an $\omega$-compatible
almost complex structure on $\hat{D}.$ A \emph{Floer trajectory}
between $x_{-}$ and $x_{+}$ is a solution $u:\RR\times S^{1}\to\hat{D}$
to the \emph{Floer equation 
\begin{align*}
\partial_{s}u+J(\partial_{t}u-X_{H}) & =0
\end{align*}
}that converges uniformly in $t$ to $x_{-}$ and $x_{+}$ as $s\to\pm\infty$:
\begin{align*}
\lim_{s\to\pm\infty}u(s,t)=x_{\pm}.
\end{align*}
We denote the moduli space of such trajectories $\mathcal{M}'(x_{-},x_{+};H).$
We may reparametrize a solution $u\in\mathcal{M}'(x_{-},x_{+};H)$
in the $\RR$-coordinate by adding a constant. Thus, Floer trajectories
occur in $\RR$-families. The space of unparametrized solutions is
denoted by $\mathcal{M}(x_{-},x_{+};H)=\mathcal{M}'(x_{-},x_{+};H)/\RR$.
When the context is clear, we will drop $H$ from the notation and
simply write $\mathcal{M}(x_{-},x_{+})$. 

If we replace $H$ with a monotone homotopy $H_{\bullet}=\ens{H_{s}}_{s\in\mathbb{R}}$,
we can instead consider solutions $u:\RR\times S^{1}\to\hat{D}$ to
the $s$-\emph{dependent Floer equation 
\begin{align*}
\mathcal{\partial}_{s}u+J(\partial_{t}u-X_{H_{s}})=0
\end{align*}
}that converge uniformly in $t$ to $x_{\pm}\in\mathcal{P}(H_{\pm})$
as $s\to\pm\infty$. The moduli space of such trajectories is denoted
by $\mathcal{\mathcal{M}}(x_{-},x_{+};H_{\bullet})$. Unlike the $s$-independent
case, $\mathcal{M}(x_{-},x_{+};H_{\bullet})$ does not admit a free
$\RR$-action by which we can quotient. 

\subsubsection{Maximum principle}

To define Floer cohomology of $\hat{D}$, we need to control the behavior
of the Floer trajectories. In particular, we have to make sure they
do not escape to infinity. Admissible Hamiltonians and admissible
complex structures allow us to achieve that requirement. The first
result in that direction is the maximum principle for Floer trajectories.
In what follows we say that $v$ is a local Floer solution of $(H,J)$
in $\hat{D}\setminus D$ if 
\begin{align*}
v=u\big|_{u^{-1}(\im u\cap\hat{D}\setminus D)}:u^{-1}(\im u\cap\hat{D}\setminus D)\to\hat{D}\setminus D
\end{align*}
for some $u\in\mathcal{M}(x_{-},x_{+};H)$.
\begin{lem}[Generalized maximum principle \cite{Vi99}]
Let $(H,J)$ be an $r_{0}$-admissible pair on $\hat{D}$. Suppose
$v$ is a local Floer solution of $(H,J)$ in $\hat{D}\setminus D$.
Then, the $r$-coordinate $r\comp v$ of $v$ does not admit an interior
maximum unless $r\comp v$ is constant. 
\end{lem}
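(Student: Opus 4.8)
The plan is to show that $r\comp v$ is a subsolution of an elliptic equation, so that the classical strong maximum principle for second-order elliptic PDE applies. First I would work in local coordinates on $\hat{D}\setminus D$, where $H(t,x,r)=h(r)$, $\omega=\dd(r\alpha)$, the almost complex structure $J$ satisfies the contact type condition $J^{\ast}\hat{\lambda}=\dd r$, and, as computed in the proof of Lemma \ref{lem:Finite_periodic_orbits}, $X_{H}=h'(r)R_{\alpha}$. Write $u=(r\comp u, \text{rest})$ and set $\rho=r\comp u$. The strategy is to feed the Floer equation $\partial_{s}u+J(\partial_{t}u-X_{H})=0$ into the expression $\Delta\rho=\partial_{s}^{2}\rho+\partial_{t}^{2}\rho$ (Laplacian on the domain $\RR\times S^{1}$), expand using $\hat{\lambda}\circ J = -\dd r$ together with $\dd\hat{\lambda}=\omega$ and $g_{J}(\cdot,\cdot)=\omega(\cdot,J\cdot)$, and collect terms.

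The key computation is the standard one: using $\dd r = -\hat\lambda\circ J$, one has $\partial_{s}\rho = \dd r(\partial_{s}u) = -\hat\lambda(J\partial_{s}u) = -\hat\lambda(\partial_{t}u - X_{H})$ (substituting the Floer equation $J\partial_s u = -(\partial_t u - X_H)$), and similarly $\partial_{t}\rho = \dd r(\partial_{t}u) = -\hat\lambda(J\partial_{t}u)$. Differentiating these and using that $u$ solves the Floer equation, $\dd\hat\lambda=\omega$, and the contact type identity $J^*\hat\lambda=\dd r$ repeatedly, the second-order terms reorganize into $\Delta\rho = |\partial_{s}u|_{J}^{2} + (\text{terms involving } h'(r), h''(r))$. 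Because $h$ is affine with $h''\equiv 0$ for $r>r_{0}$ and convex ($h''\geq 0$, or at least $C^2$-small with the right sign) on $(1,r_{0})$ — here one uses the admissibility conditions on $h$ — the zeroth-order contribution has the correct sign, and one arrives at a differential inequality of the form
$$
\Delta\rho \;\geq\; b\cdot\nabla\rho
$$
for some bounded (locally) vector field $b$ on $\RR\times S^{1}$, i.e. $\rho$ is a subsolution of a linear second-order elliptic operator $L\rho = \Delta\rho - b\cdot\nabla\rho \geq 0$.

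Given this inequality, the strong maximum principle (e.g. Gilbarg–Trudinger) immediately yields the conclusion: if $\rho=r\comp v$ attains an interior maximum on its (open) domain $u^{-1}(\im u\cap \hat{D}\setminus D)$, then $\rho$ is locally constant, and a connectedness argument propagates this to show $r\comp v$ is constant on each component. The main obstacle I anticipate is bookkeeping: carefully expanding $\Delta(r\comp u)$ using the Floer equation and the contact-type condition to extract the precise sign of the zeroth-order term, and checking that the admissibility hypothesis on $h$ (affine of positive slope outside $r_0$, $C^2$-small inside) is exactly what makes that term nonnegative; one must be attentive to the fact that $h'(r)>0$ and that on $(1,r_0)$ the $C^2$-smallness is used to keep any $h''$ contribution controlled. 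The $\dd\alpha$ and $R_\alpha$ terms must also be shown to contribute only to the first-order part $b\cdot\nabla\rho$ and not to spoil the sign of $|\partial_s u|^2_J$. Once the differential inequality is in hand, invoking the strong maximum principle is routine.
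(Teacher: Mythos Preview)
The paper does not actually give its own proof of this lemma: it is stated with attribution to Viterbo \cite{Vi99} and then used as a black box (the immediately following remark and Corollary~\ref{cor:No_escape}). Your proposal outlines precisely the standard argument from that reference (and from later expositions such as Ritter's survey): rewrite $\partial_s\rho$ and $\partial_t\rho$ using the contact-type identity $J^*\hat\lambda=\dd r$ and the Floer equation, differentiate again, and collect to obtain a differential inequality $\Delta\rho \geq b\cdot\nabla\rho$, then invoke the strong maximum principle. So there is nothing to compare --- your plan \emph{is} the referenced proof.

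One remark on a point you flag yourself. The admissibility condition in Definition~\ref{def:Affine_hamiltonian} only says $h$ is $C^2$-small on $(1,r_0)$, not convex; so ``$h''\geq 0$'' is not literally available there, and ``$C^2$-small with the right sign'' is not quite a substitute --- smallness alone does not give a sign. In practice this is harmless: the only application in the paper is Corollary~\ref{cor:No_escape}, which bounds $\im u$ by $D^R$ with $R\geq r_0$, so the maximum principle is only ever invoked in the region $r\geq r_0$ where $h$ is affine and $h''\equiv 0$; there the computation gives $\Delta\rho \geq h''(\rho)\,\partial_t\rho = 0$ cleanly. If you want the lemma exactly as stated on all of $\hat D\setminus D$, you should either assume $h$ convex on $(1,r_0)$ (a harmless strengthening of admissibility, often adopted elsewhere in the literature) or note explicitly that the conclusion is only needed past $r_0$.
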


\begin{rem}
The generalized maximum principle still holds if we replace $H\in\mathcal{H}$
by a monotone homotopy $H_{s}$ between $H_{+}\in\mathcal{H}_{r_{0}}$
and $H_{-}\in\mathcal{H}_{r_{0}'}$ and if $v$ is a local solution
of the $s$-dependent Floer equation 
\begin{align*}
\partial_{s}v+J(\partial_{t}v-X_{H_{s}}) & =0
\end{align*}
inside $\hat{D}\setminus D$. 

From the maximum principle above, we immediately obtain the following
corollary which guarantees that Floer trajectories do not escape to
infinity. 
\end{rem}

\begin{cor}
\label{cor:No_escape}Let $(H,J)$ be an $r_{0}$-admissible pair
on $\hat{D}$ and let $x_{\pm}\in\mathcal{P}(H)$. If $u\in\mathcal{M}(x_{-},x_{+})$,
then
\begin{align*}
\im u\subset D^{R},\quad\text{for }R=\max\{r\comp x_{-},r\comp x_{+},r_{0}\}.
\end{align*}
If $H_{s}$ is a monotone homotopy between $H_{-}\in\mathcal{H}_{r_{0}}$
and $H_{+}\in\mathcal{H}_{r_{0}'}$ and $u$ is a solution to the
$s$-dependent Floer equation between $x_{-}\in\mathcal{P}(H_{-})$
and $x_{+}\in\mathcal{P}(H_{+})$, then 
\begin{align*}
\im u\subset D^{R},\quad\text{for }R=\max\{r\comp x_{-},r\comp x_{+},r_{0},r_{0}'\}.
\end{align*}
\end{cor}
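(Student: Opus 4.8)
The plan is to deduce the corollary from the generalized maximum principle by a standard open--closed argument. Extend the radial coordinate to a continuous function $r\colon\hat{D}\to[1,\infty)$ by setting $r\equiv1$ on $D$, so that $\im u\subset D^{R}$ is equivalent to $r\comp u\leq R$ everywhere. Fix $u\in\mathcal{M}(x_{-},x_{+})$, put $\rho=\sup_{\RR\times S^{1}}r\comp u$ and $R=\max\{r\comp x_{-},r\comp x_{+},r_{0}\}$, and suppose for contradiction that $\rho>R$. First I would show that $\rho$ is finite and attained: choosing $c$ with $R<c<\rho$ and using that $u(s,\cdot)\to x_{\pm}$ uniformly in $t$ as $s\to\pm\infty$ together with $r\comp x_{\pm}\leq R<c$, one gets $S_{0}>0$ with $r(u(s,t))<c$ for $|s|>S_{0}$; hence the superlevel set $K=\{(s,t):r(u(s,t))\geq c\}$ is a closed subset of the compact cylinder $[-S_{0},S_{0}]\times S^{1}$, so it is compact, and since $r\comp u<c<\rho$ off $K$ we have $\sup_{K}r\comp u=\rho$, attained at some $(s_{0},t_{0})\in K$ with $\rho=r(u(s_{0},t_{0}))>r_{0}>1$.

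Next I would invoke the maximum principle at $(s_{0},t_{0})$. Since $r(u(s_{0},t_{0}))>1$, a neighbourhood of $(s_{0},t_{0})$ in $\RR\times S^{1}$ is mapped by $u$ into $\hat{D}\setminus D$, so restricting $u$ to $u^{-1}(\im u\cap\hat{D}\setminus D)$ gives a local Floer solution $v$ of $(H,J)$ in $\hat{D}\setminus D$ whose $r$-coordinate has an interior maximum (equal to $\rho$) at $(s_{0},t_{0})$; the generalized maximum principle then forces $r\comp v$ to be constant near $(s_{0},t_{0})$, hence $r\comp u\equiv\rho$ on a neighbourhood of $(s_{0},t_{0})$. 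Now consider $E=\{(s,t):r(u(s,t))=\rho\}$. It is nonempty and closed; it is open because at each of its points $\rho>1$ is an interior maximum of the $r$-coordinate of the associated local solution, so the previous step applies verbatim. Since $\RR\times S^{1}$ is connected, $E=\RR\times S^{1}$, i.e. $r\comp u\equiv\rho$, contradicting $r(u(s,t))\to r\comp x_{\pm}\leq R<\rho$ as $s\to\pm\infty$. Hence $\rho\leq R$, which is the first assertion.

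Finally, for a monotone homotopy $H_{s}$ between $H_{+}\in\mathcal{H}_{r_{0}}$ and $H_{-}\in\mathcal{H}_{r_{0}'}$ and a solution $u$ of the $s$-dependent Floer equation, the argument is unchanged: the generalized maximum principle holds in the $s$-dependent setting by the remark following it, $H_{s}=H_{\pm}$ for $\pm s$ large so the superlevel set $K$ is again confined to a compact cylinder, and with $R=\max\{r\comp x_{-},r\comp x_{+},r_{0},r_{0}'\}$ one reaches the same contradiction. The only point requiring care is the compactness of the superlevel set in the first step --- this is precisely where the uniform convergence of $u$ to its asymptotic orbits and the strict inequality $r\comp x_{\pm}\leq R<c$ are used; the rest is the routine connectedness argument above applied to the maximum principle.
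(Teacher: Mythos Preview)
Your argument is correct and is exactly the standard way to deduce this corollary from the maximum principle; the paper itself gives no explicit proof, simply stating that the corollary follows immediately from the generalized maximum principle (and its $s$-dependent variant), so your open--closed argument is precisely the fleshed-out version of that one-line claim. The only cosmetic point is that when you invoke the maximum principle to get local constancy, it is cleanest to apply it on the connected component of $u^{-1}(\hat{D}\setminus D)$ containing the maximum --- this is what actually yields that $E$ is open --- but this is implicit in what you wrote.
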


\subsubsection{Energy}

An important quantity which is associated to a Floer trajectory is
its \emph{energy. }It is defined as
\begin{align*}
E(u) & =\frac{1}{2}\int_{\RR\times S^{1}}\left(\abs{\partial_{s}u}_{J}^{2}+\abs{\partial_{t}u-X_{H}}_{J}^{2}\right)\dd s\wedge\dd t
\end{align*}
where $\abs{\,\cdot\,}_{J}$ is the norm corresponding to $g_{J}$.
Using the Floer equation, we can write 
\begin{align*}
\abs{\partial_{t}u-X_{H}}_{J}^{2} & =\omega(J\partial_{s}u,-\partial_{s}u)=\omega(\partial_{s}u,J\partial_{s}u)=\abs{\partial_{s}u}_{J}^{2}.
\end{align*}
Thus, the energy can be written more compactly as 
\begin{align*}
E(u) & =\int_{\RR\times S^{1}}\abs{\partial_{s}u}_{J}^{2}\,\dd s\wedge\dd t.
\end{align*}

It is often useful to estimate the difference in Hamiltonian action
of the ends of a Floer trajectory in terms of the energy of that trajectory.
This can be achieved using the maximum principle and Stokes Theorem. 
\begin{lem}
\label{lem:energy_act_estimates}Let $(H,J)$ be an $r_{0}$-admissible
pair and let $u\in\mathcal{M}'(x_{-},x_{+};H)$ for $x_{\pm}\in\mathcal{P}(H)$.
Then, 
\begin{align*}
0\leq E(u)=\mathcal{A}_{H}(x_{+})-\mathcal{A}_{H}(x_{-}).
\end{align*}
If $H_{s}$ is a monotone homotopy between $H_{+}\in\mathcal{H}_{r_{0}}$
and $H_{-}\in\mathcal{H}_{r_{0}'}$ that is constant in the $s$-coordinate
for $s>\abs{S}$ then 
\begin{align*}
0\leq E(u)\leq\mathcal{A}_{H_{+}}(x_{+})-\mathcal{A}_{H_{-}}(x_{-})+\sup_{\substack{s\in[-S,S],\\
t\in S^{1},p\in D^{R}
}
}\partial_{s}H_{s}(t,p)
\end{align*}
where $R=\max\{r\comp x_{-},r\comp x_{+},r_{0},r_{0}'\}$.
\end{lem}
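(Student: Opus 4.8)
The plan is to prove both inequalities by the standard Floer-theoretic computation: differentiate the action along the trajectory and integrate, keeping careful track of the sign contributions, and use the generalized maximum principle (Corollary \ref{cor:No_escape}) only to guarantee that all integrals are taken over a compact region of $\hat D$ so that Stokes' theorem applies. For the autonomous case, fix $u\in\mathcal{M}'(x_-,x_+;H)$ and compute $\frac{d}{ds}\mathcal{A}_H(u(s,\cdot)) = \int_0^1 \omega(\partial_s u,\partial_t u - X_H)\,\dd t = \int_0^1 |\partial_s u|_J^2\,\dd t \geq 0$, where the first equality is the standard first-variation formula for $\mathcal{A}_H$ (valid because $\im u \subset D^R$ is compact, so the pullback of $\hat\lambda$ behaves as in the closed case) and the second uses the Floer equation together with the identity $|\partial_t u - X_H|_J^2 = |\partial_s u|_J^2$ already derived in the Energy subsection. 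Integrating over $s\in\RR$ and using $\lim_{s\to\pm\infty}u(s,\cdot) = x_\pm$ gives $E(u) = \mathcal{A}_H(x_+) - \mathcal{A}_H(x_-)$, and $E(u)\geq 0$ is immediate from its definition. This establishes the first display.

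For the homotopy case, repeat the computation with $H_s$ in place of $H$. The extra term comes from the explicit $s$-dependence: $\frac{d}{ds}\mathcal{A}_{H_s}(u(s,\cdot)) = \int_0^1 |\partial_s u|_J^2\,\dd t - \int_0^1 (\partial_s H_s)(t,u(s,t))\,\dd t$. Integrating over $s\in\RR$, the left side telescopes to $\mathcal{A}_{H_+}(x_+) - \mathcal{A}_{H_-}(x_-)$ (using that $H_s$ is $s$-constant outside $[-S,S]$), the first term on the right is $E(u)\geq 0$, and the second term is bounded below by $-2S\sup\partial_s H_s$ where the supremum ranges over $s\in[-S,S]$, $t\in S^1$, and $p\in D^R$ — here Corollary \ref{cor:No_escape} is exactly what pins the values $u(s,t)$ to the compact set $D^R$ so this sup is finite and the stated bound applies. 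Rearranging yields $0 \leq E(u) \leq \mathcal{A}_{H_+}(x_+) - \mathcal{A}_{H_-}(x_-) + \sup\partial_s H_s$ (absorbing the harmless $2S$ factor, or keeping it — matching whatever normalization the statement intends).

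I expect the only genuine subtlety to be \emph{justifying the first-variation formula} $\frac{d}{ds}\mathcal{A}_{H_s}(u(s,\cdot)) = \int_0^1\omega(\partial_s u,\partial_t u - X_{H_s})\,\dd t - \int_0^1(\partial_s H_s)\,\dd t$ on the non-compact manifold $\hat D$: the term $\frac{d}{ds}\int_0^1 (u(s,\cdot))^*\hat\lambda$ must be computed via Cartan's formula / Stokes on the loop $u(s,\cdot)$, and one needs the image to stay in a fixed compact piece $D^R$ (independent of $s$) to rule out any contribution ``escaping to infinity.'' This is precisely supplied by Corollary \ref{cor:No_escape}. Everything else is bookkeeping of signs. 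A secondary minor point is confirming that $\int_0^1 |\partial_s u|_J^2\,\dd t$ is integrable in $s$ — which follows a posteriori from the fact that the total integral equals the finite quantity $E(u)$, itself finite because $u$ has finite-action asymptotics.
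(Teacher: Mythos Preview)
Your proposal is correct and follows the standard route the paper has in mind: the paper does not spell out a proof but simply remarks that the estimate ``can be achieved using the maximum principle and Stokes Theorem,'' which is exactly what you do---differentiate $\mathcal{A}_{H_s}(u(s,\cdot))$, use the Floer equation to identify the first-variation with $\int_0^1|\partial_s u|_J^2\,\dd t$, and integrate in $s$, invoking Corollary~\ref{cor:No_escape} to confine $\im u$ to $D^R$ so that the supremum of $\partial_s H_s$ is taken over a compact set. Your observation about the missing $2S$ factor is also correct: the integral $\int_{-S}^{S}\int_0^1(\partial_s H_s)(t,u)\,\dd t\,\dd s$ is bounded by $2S\cdot\sup\partial_s H_s$, not by the sup alone, so the lemma as stated is slightly imprecise (harmlessly so, since only the qualitative consequence---that continuation maps decrease action when $\partial_s H_s\le 0$---is ever used downstream, cf.\ Section~\ref{subsec:Continuation-maps}).
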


\section{Filtered Floer and symplectic cohomology}

We present in this subsection a brief overview of Floer cohomology
for completions of Liouville domains and their symplectic cohomology.
For more details we refer the reader to \cite{CiFlHo95}, \cite{CiFlHoWy96},
\cite{Vi99}, \cite{We06}, \cite{CiFrOa10} and \cite{Ri13}.

\subsection{Filtered Floer Cohomology}

\subsubsection{The Floer cochain complex}

Let $(H,J)$ be an admissible pair. As mentioned in Remark \ref{rem:1-per_orb_S1_fam},
the 1-periodic orbits of $H$ on $\hat{D}\setminus D$ come in a finite
number of $S^{1}$-families which we denote by $\hat{x}_{i}$. To
break each $\hat{x}_{i}$ in a finite number of isolated periodic
orbits, we first choose an open neighborhoods $U_{i}$ of each $\hat{x}_{i}$
such that $U_{i}\cap U_{j}=\varnothing$ for $i\neq j$. Then, we
define on each $\hat{x}_{i}$ a Morse function $f_{i}$ having exactly
two critical points : one of index $0$ and another of index $1$.
We extend each $f_{i}$ to its corresponding $U_{i}$. When added
to $H$, these perturbations, which can be chosen as small as we want,
break each of the $S^{1}$-families into two critical points. In virtue
of the action formula derived in Remark \ref{rem:Action_formula_cylindrical_part},
the actions of the new critical points are as close as we want to
the action of their original $S^{1}$-family. We denote by $H_{1}$
the Hamiltonian resulting from this procedure. By abuse of notation
we will write $\mathcal{P}(H)$ for the set of 1-periodic orbits of
$H_{1}$. 

We define the \emph{Floer cochain group of $H$} as the $\mathbb{Z}_{2}$-vector
space \footnote{We use $\mathbb{Z}_2$ coefficients here for simplicity but the cohomological construction that follows  can be carried out with any coefficient ring.}
\begin{align*}
\CF^{\ast}(H) & =\bigoplus_{x\in\mathcal{P}(H)}\mathbb{Z}_{2}\expval{x}.
\end{align*}
As the notation above suggests, $\CF^{\ast}(H)$ is in fact a \emph{graded}
$\mathbb{Z}_{2}$-vector space. Assuming that the first Chern class
$c_{1}(\omega)\in\HH^{2}(\hat{D};\mathbb{Z})$ of $(T\hat{D},J)$
vanishes on $\pi_{2}(\hat{D})$, the Conley-Zehnder index $\CZ(x)\in\mathbb{\mathbb{Z}}$
of a 1-periodic orbit $x\in\mathcal{P}(H)$ is well defined \cite{SaZe92}.
We can therefore equip $\CF^{\ast}(H)$ with the degree 
\begin{align*}
\abs{x}=\frac{\dim\hat{D}}{2}-\CZ(x)
\end{align*}
and define 
\begin{align*}
\CF^{k}(H)=\bigoplus_{\substack{x\in\mathcal{P}(H)\\
\abs{x}=k
}
} & \mathbb{Z}_{2}\expval{x}.
\end{align*}
Here, $\CZ$ is normalized such that for a $C^{2}$-small time-independent
admissible Hamiltonian $F$, 
\begin{align*}
\CZ(x) & =\frac{\dim\hat{D}}{2}-\ind(x)
\end{align*}
where $\ind(x)$ corresponds to the Morse index of $x\in\Crit(F)=\mathcal{P}(F)$.
In particular, if $x$ is a local minimum of $F$, then $\abs{x}=0$.
This convention therefore ensures that the cohomological unit has
degree zero.

For a generic perturbation of $J$, the space $\mathcal{M}(x_{-},x_{+};H)$
is a smooth manifold of dimension 
\begin{align*}
\dim\mathcal{M}(x_{-},x_{+};H)=\CZ(x_{+})-\CZ(x_{-})-1.
\end{align*}
In the case where $|x_{-}|=|x_{+}|+1$, Corollary \ref{cor:No_escape}
and Lemma \ref{lem:energy_act_estimates} allow us to use the standard
compactness arguments, as in \cite[ Chapter 8]{AuDa14} to show that
$\mathcal{M}(x_{-},x_{+};H)$ is a compact manifold of dimension 0.
Knowing that, we define the co-boundary operator $\partial:\CF^{k}(H)\to\CF^{k+1}(H)$
by 
\begin{align*}
\partial x_{+}=\sum_{\abs{x_{-}}=k+1}\#_{2}\mathcal{M}(x_{-},x_{+};H)x_{-}
\end{align*}
where $\#_{2}\mathcal{M}(x_{-},x_{+};H)$ is the count modulo 2 of
components in $\mathcal{M}(x_{-},x_{+},H)$. 

\begin{figure}[H]

\begin{centering}
\includegraphics{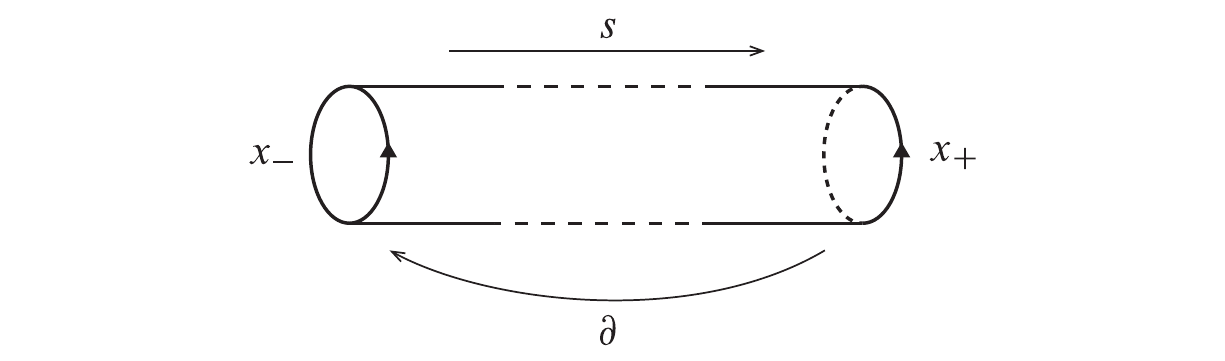}
\par\end{centering}
\caption{The differential in Floer cohomology goes from right to left.}
\end{figure}

\noindent Using once again Corollary \ref{cor:No_escape}, $\partial\comp\partial=0$
holds by standard arguments which appear in \cite[ Chapter 9]{AuDa14}.
The pair $(\CF^{\ast}(H),\partial)$ is thus a graded cochain complex
that we call the \emph{Floer cochain complex of $H$. }

\subsubsection{Filtered Floer cochain complex}

The Hamiltonian action functional induces a filtration on the Floer
cochain complex. For $a\in(\RR\cup\ens{\pm\infty})\setminus\Spec(H)$,
we define 
\begin{align*}
\CF_{<a}^{k}(H) & =\bigoplus_{\substack{x\in\mathcal{P}(H)\\
|x|=k,\ \mathcal{A}_{H}(x)<a
}
}\mathbb{Z}_{2}\expval{x}.
\end{align*}
By definition, we have $\CF^{\ast}(H)=\CF_{<+\infty}^{\ast}(H)$.
Lemma \ref{lem:energy_act_estimates} assures that $\partial$ decreases
the action. Thus, the restriction $\partial_{<a}:\CF_{<a}^{k}(H)\to\CF_{<a}^{k+1}(H)$
of the co-boundary operator is well defined and $(\CF_{<a}^{\ast}(H),\partial_{<a})$
is a sub-complex of $(\CF^{\ast}(H),\partial)$. Now, for $a,b\in(\RR\cup\ens{\pm\infty})\setminus\Spec(H)$
such that $a<b$, we can define the Floer cochain complex in the action
window $(a,b)$ as the quotient 
\begin{align*}
\CF_{(a,b)}^{\ast}(H) & =\frac{\CF_{<b}^{\ast}(H)}{\CF_{<a}^{\ast}(H)},
\end{align*}
on which we denote the projection of the co-boundary operator by 
\begin{align*}
\partial_{(a,b)} & :\CF_{(a,b)}^{k}(H)\to\CF_{(a,b)}^{k+1}(H).
\end{align*}
 Therefore, for $a,b,c\in(\RR\cup\ens{\pm\infty})\setminus\Spec(H)$
such that $a<b<c$, we have an inclusion and a projection 
\begin{align*}
\iota_{a,a}^{b,c} & :\CF_{(a,b)}^{\ast}(H)\to\CF_{(a,c)}^{\ast}(H),\quad\pi_{a,b}^{c,c}:\CF_{(a,c)}^{\ast}(H)\to\CF_{(b,c)}^{\ast}(H)
\end{align*}
 that produce the short exact sequence 

\[
\begin{tikzcd}[column sep = 30pt, row sep = 30pt]
	0 \arrow{r} &
	\CF^\ast_{(a,b)}(H) \arrow{r}{\iota_{a,a}^{b,c}} &
	\CF^\ast_{(a,c)}(H) \arrow{r}{\pi_{a,b}^{c,c}} &
	\CF^\ast_{(b, c)}(H) \arrow{r} &
	0.
\end{tikzcd}
\]For simplicity, we define $\iota^{<c}=\iota_{-\infty,-\infty}^{+\infty,c}$
and $\pi_{>b}=\pi_{-\infty,b}^{+\infty,+\infty}$.

\subsubsection{Filtered Floer cohomology\label{subsec:Filtered-Floer-cohomology}}

Let $a,b\in(\RR\cup\ens{\pm\infty})\setminus\Spec(H)$ such that $a<b$.\emph{
}The above filtered cochain complexes allow us to define the \emph{Floer
cohomology group of $H$ }in the action window\emph{ $(a,b)$ as 
\begin{align*}
\Hf_{(a,b)}^{\ast}(H) & =\frac{\ker\partial_{(a,b)}}{\im\partial_{(a,b)}}.
\end{align*}
}The full Floer cohomology group of $H$ is defined as $\Hf^{\ast}(H)=\Hf_{(-\infty,+\infty)}^{\ast}(H)$.
For $a,b,c\in(\RR\cup\ens{\pm\infty})\setminus\Spec(H)$ such that
$a<b<c$, the short exact sequence on the cochain level induces a
long exact sequence in cohomology: \[
\begin{tikzcd}[column sep = 25pt, row sep = 30pt]
	\Hf^\ast_{(a,b)}(H) \arrow{dr}{[\iota_{a,a}^{b,c}]} &	 
\\
&\Hf^\ast_{(a,c)}(H) \arrow{dl}{[\pi_{a,b}^{c,c}]}
\\
\Hf^\ast_{(b, c)}(H) \arrow{uu}{[+1]}
	&
\end{tikzcd}
\]

For $C^{2}$-small admissible Hamiltonians with small slope at infinity,
the Floer cohomology recovers the standard cohomology of $D$. 
\begin{lem}[{\cite[Section 15.2]{Ri13}}]
\label{lem:Standard_cohomology}Let $H\in\mathcal{H}$ be a $C^{2}$-small
Hamiltonian with $\tau_{H}<T_{0}$ for $T_{0}=\min\Spec(\partial D,\lambda)$.
Then, we have an isomorphism 
\begin{align*}
\Phi_{H}:\HH^{\ast}(D)\to\Hf^{\ast}(H).
\end{align*}
\end{lem}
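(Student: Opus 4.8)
The plan is to show that for a $C^2$-small autonomous $H$ with $\tau_H<T_0$ the Floer cochain complex $(\CF^\ast(H),\partial)$ coincides with the Morse cochain complex of $H|_D$, which in turn computes $\HH^\ast(D;\mathbb{Z}_2)$; the isomorphism $\Phi_H$ is then the composite of these two identifications (alternatively one may take $\Phi_H$ to be a PSS-type morphism and verify it is an isomorphism by the same reduction). First I would pin down the generators. On $\hat D\setminus D$, the computation in the proof of Lemma \ref{lem:Finite_periodic_orbits} shows that a non-constant $1$-periodic orbit lying in $\{r\}\times\partial D$ is a closed Reeb orbit of period $h'(r)$ and that $X_H=h'(r)R_\alpha$; since $H$ is $C^2$-small and $\tau_H<T_0=\min\Spec(\partial D,\lambda)$ we may assume $0<h'(r)<T_0$ for all $r\ge1$, so there are neither non-constant $1$-periodic orbits nor critical points of $H$ outside $D$. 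Inside $D$, the standard estimate for $C^2$-small autonomous Hamiltonians (Floer; Salamon--Zehnder; see also \cite[Section 15.2]{Ri13}) shows that every element of $\mathcal P(H)$ is a critical point of $H|_D$, and regularity of $H$ forces $H|_D$ to be Morse. With the normalisation of $\CZ$ fixed above, $\CZ(x)=\tfrac{\dim\hat D}{2}-\ind(x)$ for these orbits, hence $|x|=\ind(x)$, and $\CF^k(H)$ is canonically the degree-$k$ part of the Morse cochain complex of $H|_D$.

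Next I would match the differentials. For $x_\pm\in\mathcal P(H)\subset D$ and $u\in\mathcal M(x_-,x_+;H)$, Corollary \ref{cor:No_escape} bounds $\im u$ in some $D^R$, and the generalized maximum principle applied on the (relatively compact, with boundary value $r\circ u=1$) region where $r\circ u>1$ forces $r\circ u\le1$, i.e. $\im u\subset D$. So every Floer trajectory between critical points stays in the compact manifold $D$, where $H$ is $C^2$-small; for a generic admissible $J$ making $g_J|_D$ Morse--Smale for $H|_D$, the classical theorem on $C^2$-small Hamiltonians (Floer, Hofer--Salamon, Salamon--Zehnder) identifies the index-difference-one Floer trajectories with the negative $g_J$-gradient trajectories of $H|_D$ and shows these are regular. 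Hence $\partial$ is literally the Morse cochain differential of $H|_D$.

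Finally, arranging — as we may, up to a homotopy within the class of Hamiltonians of slope $<T_0$, which induces an isomorphism on Floer cohomology by a continuation argument — that $h$ is increasing near $r=1$, the field $\nabla_{g_J}H$ points outward along $\partial D$, so the Morse cochain complex of $H|_D$ computes $\HH^\ast(D;\mathbb{Z}_2)$; composing this with the Morse-theoretic identification above gives the desired $\Phi_H$. The main obstacle is the middle step: reducing the $C^2$-small Floer complex to a Morse complex is the one genuinely analytic input — it combines the a priori energy/$C^0$ estimate that rules out $t$-dependent solutions with the automatic regularity of gradient trajectories viewed as Floer trajectories — and the maximum-principle argument must be run carefully so that the resulting count takes place inside the compact $D$ rather than in $\hat D$.
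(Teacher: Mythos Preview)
The paper does not prove this lemma; it is stated with a citation to \cite[Section 15.2]{Ri13} and used as a black box. Your outline is exactly the standard argument one finds there: rule out orbits on the cylindrical end using $h'(r)<T_0$, invoke the $C^2$-small Floer/Morse correspondence on the compact part, and confine trajectories to $D$ via the maximum principle so that the Morse cochain complex of $H|_D$ computes $\HH^\ast(D)$. So your approach is correct and agrees with the source the paper defers to.

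One small point worth tightening: the hypothesis $H\in\mathcal H$ with $h$ ``$C^2$-small on $(1,r_0)$'' does not by itself force $h'>0$ there, so in principle constant orbits could sit in the collar. You handle this implicitly by writing ``we may assume $0<h'(r)<T_0$'', and later by invoking a homotopy within slopes $<T_0$; it would be cleaner to say up front that by Lemma \ref{lem:Same_Slope} the Floer cohomology depends only on the slope class, so one may replace $H$ by a Hamiltonian with $h$ strictly increasing on $[1,\infty)$ before running the Morse comparison. With that adjustment your argument is complete.
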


\begin{rem}
\label{rem:Unit}We can endow $\Hf^{\ast}(H)$ with a ring structure
\cite{Ri13} where the product is given by the pair of pants product.
The unit in $\Hf^{\ast}(H)$, which we denote $1_{H}$, coincides
with $\Phi_{H}(e_{D})$ where $e_{D}$ is the unit in $\HH^{\ast}(D)$
. 
\end{rem}

\subsubsection{Compactly supported Hamiltonians}

We can define the Floer cohomology of compactly supported Hamiltonians
on Liouville domains by first extending to affine functions on the
cylindrical portion of $\hat{D}$. 
\begin{defn}
Denote by $\mathcal{C}(D)$ the set of Hamiltonians with support in
$S^{1}\times(D\setminus\partial D)$. Let $H\in\mathcal{C}(D)$ .
For $\tau\in(0,\infty)\setminus\Spec(\partial D,\lambda)$, we define
the $\tau$-extension $H^{\tau}\in\mathcal{H}_{1}$ of $H$ as follows.
Fix $0<\varepsilon<1$, 

\begin{itemize}
	\item  $H^\tau = H$ on $D$,
	\item $H^\tau =  h_\varepsilon(r)$ on $\hat{D}\setminus D$,
	\item $h_\varepsilon(r)$ is convex for $r\in[1,1+\varepsilon]$ with $h_\varepsilon^{(k)}(1)=0$ for all $k\geq 0$, $h_\varepsilon'(1+\varepsilon) = \tau$ and $h_\varepsilon^{(\ell)}(1+\varepsilon) = 0$ for all $\ell >0$,
	\item $h_\varepsilon(r) = \tau(r-(1+\varepsilon/2))$ for $r\in[1+\varepsilon,+\infty)$.
\end{itemize}

\noindent The Floer cohomology of $H$ is defined as 
\begin{align*}
\Hf_{(a,b)}^{\ast}(H)=\Hf_{(a,b)}^{\ast}(H^{\tau})
\end{align*}
where $0<\tau<T_{0}$. 

\begin{figure}[H]
\begin{centering}
\includegraphics{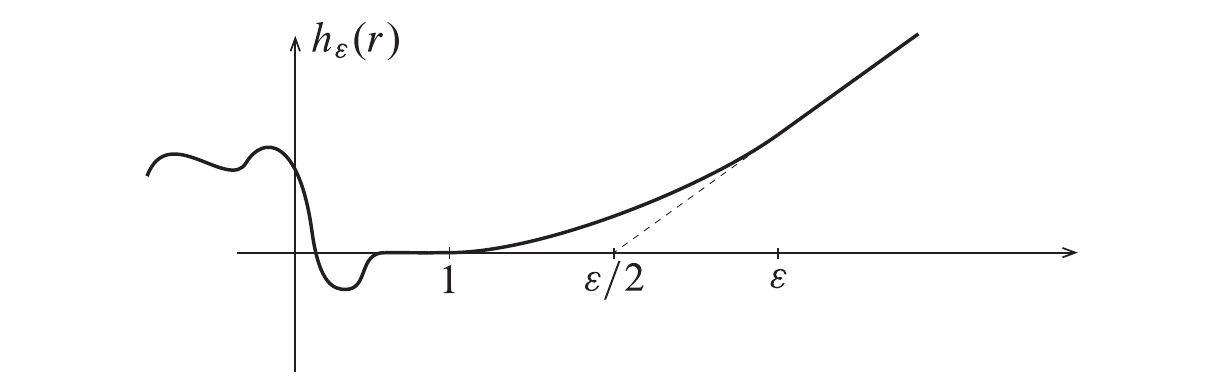}
\par\end{centering}
\caption{The $\tau$-extension of a compactly supported Hamiltonian.}
\end{figure}
\end{defn}

\noindent Since we take a slope $\tau$ smaller than the minimum
Reeb period to define $\Hf_{(a,b)}^{\ast}(H)$, the above definition
doesn't depend on the choice of $\tau$ as we will see in Lemma \ref{lem:Same_Slope}
below.

\subsubsection{Continuation maps\label{subsec:Continuation-maps}}

Let $K\in\mathcal{H}_{r_{0}}$ and $F\in\mathcal{H}_{r_{0}'}$ such
that $F\leq K$. Consider a monotone homotopy $H_{\bullet}$ from
$F$ to $K$. Then from Corollary \ref{cor:No_escape} and Lemma \ref{lem:energy_act_estimates}
in the case of homotopies, we can apply the techniques shown in \cite[ Chapter 11]{AuDa14}
to show that, for $x_{-}\in\mathcal{P}(K)$ and $x_{+}\in\mathcal{P}(F)$
with $|x_{-}|=|x_{+}|$, $\mathcal{M}(x_{-},x_{+};H_{\bullet})$ is
a smooth compact manifold of dimension zero. The \emph{continuation
map $\Phi^{H_{\bullet}}:\CF^{k}(F)\to\CF^{k}(K)$ induced by $H_{s}$
}on the cochain level is defined as 
\begin{align*}
\Phi^{H_{\bullet}}(x_{+})=\sum_{|x_{-}|=k}\#_{2}\mathcal{M}(x_{-},x_{+};H_{\bullet})x_{-}
\end{align*}
where $\#_{2}\mathcal{M}(x_{-},x_{+};H_{\bullet})$ is the count modulo
2 of components in $\mathcal{M}(x_{-},x_{+};H_{\bullet})$. The map
\begin{align*}
[\Phi^{H_{\bullet}}]:\Hf^{\ast}(F)\to\Hf^{\ast}(K)
\end{align*}
is independent of the chosen monotone homotopy and we can denote it
by $[\Phi^{K,F}]$. Consider the monotone homotopy 
\begin{align*}
H_{s}=K+\beta(s)(F-K)
\end{align*}
described in Section \ref{subsec:Admissible-Hamiltonians-and}. We
note that $\partial_{s}H_{s}\leq0$ since $F\leq K$ and $\beta'\geq0$.
Thus the action estimate given by Lemma \ref{lem:energy_act_estimates}
for homotopies yields 
\begin{align*}
\mathcal{A}_{K}(x_{-})\leq\mathcal{A}_{H}(x_{+})+\sup_{\substack{s\in[-S,S],\\
t\in S^{1},p\in D^{R}
}
}\partial_{s}H_{s}(t,p)\leq\mathcal{A}_{H}(x_{+})
\end{align*}
for $x_{-}\in\mathcal{P}(K)$ and $x_{+}\in\mathcal{P}(F)$. Therefore,
the continuation map decreases the action and hence induces maps 
\begin{align*}
[\Phi_{(a,b)}^{K,F}]:\Hf_{(a,b)}^{\ast}(F)\to\Hf_{(a,b)}^{\ast}(K)
\end{align*}
 that commute with the inclusion and restriction maps as follows \cite[Section 8]{Ri13}:

\begin{equation}
\label{cont_proj_res_comm}
\begin{tikzcd}[column sep = 25pt, row sep = 30pt]
	\cdots \arrow{r} &
	\Hf^\ast_{(a,b)}(F) \arrow{r}{[\iota_{a,a}^{b,c}]}\arrow{d}{[\Phi_{(a,b)}^{K,F}]} &
	\Hf^\ast_{(a,c)}(F) \arrow{r}{[\pi_{a,b}^{c,c}]}\arrow{d}{[\Phi_{(a,c)}^{K,F}]} &
	\Hf^\ast_{(b, c)}(F) \arrow{r}\arrow{d}{[\Phi_{(b,c)}^{K,F}]} &\cdots
\\
	\cdots \arrow{r} &
	\Hf^\ast_{(a,b)}(K) \arrow{r}{[\iota_{a,a}^{b,c}]} &
	\Hf^\ast_{(a,c)}(K) \arrow{r}{[\pi_{a,b}^{c,c}]} &
	\Hf^\ast_{(b, c)}(K) \arrow{r} &\cdots
\end{tikzcd}
\end{equation}Suppose we are given another Hamiltonian $H\geq K$, then we have
the commutative diagram 

\[
\begin{tikzcd}[column sep = 40pt, row sep = 30pt]
	\Hf^\ast_{(a,b)}(F)\arrow{r}{[\Phi^{K,F}_{(a,b)}]}\arrow[bend right=30,swap]{rr}{[\Phi^{H,F}_{(a,b)}]}&
	\Hf^\ast_{(a,b)}(K)\arrow{r}{[\Phi^{H,K}_{(a,b)}]}&
	\Hf^\ast_{(a,b)}(H)
\end{tikzcd}
\]As opposed to the closed case, for completion of Liouville domains,
continuation maps do not necessarily yield isomorphisms. One case
in which they do is when both Hamiltonians have the same slope.
\begin{lem}[{\cite[Section 2.1]{We06}}]
\label{lem:Same_Slope}Let $F,K\in\mathcal{H}$ and suppose $\tau_{F}$
and $\tau_{K}$ are both contained in an open interval that does not
intersect $\Spec(\del D,\alpha)$. Then, 
\begin{itemize}
\item if $\tau_{F}=\tau_{K}$, $\Phi^{K,F}\comp\Phi^{F,K}=\id=\Phi^{F,K}\comp\Phi^{K,F}$
and thus $\Hf^{\ast}(F)\cong\Hf^{\ast}(K)$.
\item if $\tau_{F}<\tau_{K}$, $[\Phi^{K,F}]:\Hf^{\ast}(F)\to\Hf^{\ast}(K)$
is an isomorphism.
\end{itemize}
Under these isomorphisms, $1_{F}$ and $1_{K}$ are identified. 
\end{lem}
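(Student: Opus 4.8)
The plan is to treat the two cases separately, reducing each to the behaviour of continuation maps when all slopes involved are confined to a single gap of $\Spec(\del D,\alpha)$, combined with the standard ``homotopy of homotopies'' principle.

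\emph{The equal-slope case $\tau_F=\tau_K=:\tau$.} Since $\Spec(\del D,\alpha)$ is closed and $\tau\notin\Spec(\del D,\alpha)$, there is an interval around $\tau$ disjoint from it. I would take the linear interpolation $H_s=(1-\beta_s)F+\beta_s K$, with $\beta$ as in Section \ref{subsec:Continuation-maps}: this is a homotopy through admissible Hamiltonians, all of slope $\tau$ at infinity, and after a compactly supported generic perturbation it is regular. Its slope function is constant in $s$, so in particular $\del_s\tau_s\le 0$; hence Corollary \ref{cor:No_escape} and the generalized maximum principle apply, and it induces a continuation map, while the reversed homotopy induces one in the opposite direction. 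Concatenating the two homotopies gives a homotopy from $F$ to $F$ of constant slope $\tau$; by a two-parameter version of the no-escape estimate it is homotopic rel endpoints to the constant homotopy at $F$ (shrink the excursion through $K$ via $F\rightsquigarrow((1-\lambda)K+\lambda F)\rightsquigarrow F$, all of slope $\tau$), whose continuation map is the identity. The usual chain-homotopy argument then yields $\Phi^{K,F}\comp\Phi^{F,K}=\id$ and $\Phi^{F,K}\comp\Phi^{K,F}=\id$, hence $\Hf^\ast(F)\cong\Hf^\ast(K)$.

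\emph{The case $\tau_F<\tau_K$.} Now both slopes lie in an open interval $(a,b)$ with $(a,b)\cap\Spec(\del D,\alpha)=\varnothing$, so $[\tau_F,\tau_K]\subset(a,b)$. The forward map $[\Phi^{K,F}]\colon\Hf^\ast(F)\to\Hf^\ast(K)$ exists by the monotone construction of Section \ref{subsec:Continuation-maps}, since a monotone homotopy can always be built once $\tau_F\le\tau_K$. The obstacle is to produce an inverse: a homotopy realizing a map $\Hf^\ast(K)\to\Hf^\ast(F)$ necessarily runs from $F$ to $K$ with \emph{increasing} slope and so is not monotone, and the maximum principle as stated does not apply to it. I would get around this with an \emph{integrated} maximum principle: for a homotopy $G_\bullet$ from $F$ to $K$ whose slopes $\tau_s$ all lie in $[\tau_F,\tau_K]$, a Floer solution of the $s$-dependent equation cannot cross the radius $R$ beyond which every $G_s$ is linear of slope $\tau_s\notin\Spec(\del D,\alpha)$, so it remains in a fixed compact set regardless of monotonicity; this is proved by applying Stokes' theorem on $u^{-1}(\ens{r\ge R})$ and using that $\tau_s\notin\Spec(\del D,\alpha)$ forces the boundary term to have a sign. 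Granting this, $G_\bullet$ induces a well-defined map $[\Phi^{F,K}]$. Concatenating $G_\bullet$ with the monotone homotopy defining $[\Phi^{K,F}]$ gives a homotopy from $F$ to $F$ (respectively $K$ to $K$) with slopes confined to $(a,b)$; by the two-parameter no-escape estimate it is homotopic rel endpoints to a constant-slope homotopy, so $[\Phi^{F,K}]\comp[\Phi^{K,F}]=\id$ and $[\Phi^{K,F}]\comp[\Phi^{F,K}]=\id$. Hence $[\Phi^{K,F}]$ is an isomorphism.

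\emph{Units.} Continuation maps between admissible Hamiltonians are unital homomorphisms for the pair-of-pants product (see Remark \ref{rem:Unit}), so they carry $1_F$ to $1_K$; equivalently, for a sufficiently small auxiliary slope the identifications $\Phi_F(e_D)=1_F$ and $\Phi_K(e_D)=1_K$ of Lemma \ref{lem:Standard_cohomology} and Remark \ref{rem:Unit} are compatible with the continuation maps. I expect the genuine technical weight of the proof to lie in the integrated maximum principle in the non-monotone setting, together with its two-parameter analogue used in the homotopy-of-homotopies steps; once those a priori $C^0$-bounds are in place, the rest is the formal calculus of continuation maps already recalled in Section \ref{subsec:Continuation-maps}.
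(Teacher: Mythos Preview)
The paper does not give its own proof of this lemma; it is stated with a bare citation to \cite[Section 2.1]{We06} and used as a black box thereafter. So there is nothing in the paper to compare your argument against directly.

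That said, your sketch is essentially the standard argument one finds in the literature. You have correctly identified the one nontrivial point: in the case $\tau_F<\tau_K$, constructing the inverse map $[\Phi^{F,K}]$ requires a homotopy whose slope is increasing in $s$, so the monotonicity hypothesis $\partial_s\tau_s\le 0$ of the generalized maximum principle fails. The remedy you propose---an integrated no-escape estimate valid whenever the slope function $\tau_s$ stays in a single connected component of $\mathbb{R}\setminus\Spec(\partial D,\alpha)$---is exactly the right ingredient, and is what appears (in one form or another) in Weber's paper and in Ritter's survey. Once that $C^0$-bound is in hand, the homotopy-of-homotopies argument you outline goes through verbatim, and the unit statement follows from compatibility of continuation maps with the pair-of-pants product as you say.

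One small remark: the identities $\Phi^{K,F}\comp\Phi^{F,K}=\id$ in the equal-slope case should be read on cohomology (the chain-level compositions are only chain-homotopic to the identity, as your own argument shows); the paper's notation here is slightly loose, dropping the brackets it uses elsewhere.
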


In action windows, we have the following isomorphisms. 
\begin{lem}[{\cite[ Proposition 1.1]{Vi99}}]
\label{lem:Homotopy_window} Let $H_{\bullet}$ be a monotone homotopy
between $H_{\pm}\in\mathcal{H}$ that is constant in the $s$-coordinate
for $|s|>S>0$. Suppose $a_{s},b_{s}:\RR\to\RR$ are functions which
are constant outside $[-S,S]$ and $a_{s},b_{s}\notin\Spec(H_{s})$
for all $s$. Then, 

\[
	\begin{tikzcd}
		{[\Phi^{H_-, H_+}]}:\Hf_{(a_{+},b_{+})}^{\ast}(H_{+})\arrow{r}{\cong}&\Hf_{(a_{-},b_{-})}^{\ast}(H_{-})
	\end{tikzcd}
\]for $a_{\pm}=\lim_{s\to\pm\infty}a_{s}$ and $b_{\pm}=\lim_{s\to\pm\infty}b_{s}$. 
\end{lem}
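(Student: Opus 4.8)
The plan is to establish the statement in two steps: (i) that the continuation map $\Phi^{H_\bullet}$ respects the \emph{moving} filtration and therefore induces a map $[\Phi^{H_-,H_+}]\colon\Hf^\ast_{(a_+,b_+)}(H_+)\to\Hf^\ast_{(a_-,b_-)}(H_-)$ between the windowed cohomologies, and (ii) that this induced map is an isomorphism. Both steps I would reduce to a local statement about short sub-homotopies: fix a partition $-S=\sigma_0<\sigma_1<\dots<\sigma_N=S$ fine enough that each sub-homotopy $H_{[\sigma_i,\sigma_{i+1}]}$ is $C^2$-small, and treat it piece by piece. The guiding principle throughout is that, since $a_s,b_s\notin\Spec(H_s)$ for every $s\in[-S,S]$, the boundary levels of the window keep a uniform positive distance to the relevant action values, so along the homotopy no $1$-periodic orbit is ever dragged across $a_s$ or $b_s$; the generators inside the window can at most be created or destroyed in canceling pairs strictly inside (or strictly outside) the window, which does not change cohomology. (Note that the \emph{unwindowed} map $[\Phi^{H_-,H_+}]$ is in general not an isomorphism — it is the moving window that forces the statement.)

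For step (i), on each sub-homotopy $H_{[\sigma_i,\sigma_{i+1}]}$ the energy--action estimate of Lemma~\ref{lem:energy_act_estimates} (homotopy version) together with the no-escape Corollary~\ref{cor:No_escape} (which uses $\partial_s\tau_s\le 0$) shows that a continuation trajectory changes the action by at most an amount controlled by the $C^2$-size of the sub-homotopy. Choosing the partition finer than the positive margin guaranteed by $a_s,b_s\notin\Spec(H_s)$, each sub-continuation map therefore sends $\CF^\ast_{<b_{\sigma_{i+1}}}$ into $\CF^\ast_{<b_{\sigma_i}}$ and $\CF^\ast_{<a_{\sigma_{i+1}}}$ into $\CF^\ast_{<a_{\sigma_i}}$, hence descends to a chain map between the subquotients $\CF^\ast_{(a_{\sigma_i},b_{\sigma_i})}$. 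Composing over the partition shows that $\Phi^{H_\bullet}$ descends to a chain map $\CF^\ast_{(a_+,b_+)}(H_+)\to\CF^\ast_{(a_-,b_-)}(H_-)$, compatible with the inclusion and projection maps as in diagram~\eqref{cont_proj_res_comm}; on cohomology this is $[\Phi^{H_-,H_+}]$.

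For step (ii), each $C^2$-small monotone sub-homotopy induces an isomorphism $\Hf^\ast_{(a_{\sigma_{i+1}},b_{\sigma_{i+1}})}(H_{\sigma_{i+1}})\to\Hf^\ast_{(a_{\sigma_i},b_{\sigma_i})}(H_{\sigma_i})$. When the slopes of the two ends coincide this is the argument of Lemma~\ref{lem:Same_Slope} carried out in action windows: the reverse sub-homotopy, also monotone, provides an inverse, and both compositions are chain homotopic to the identity by filtration-preserving homotopies because no orbit crosses the window. In general one arranges the slope to change within a single piece and runs the same ``small homotopy, no crossing'' argument there. Composing these isomorphisms over the partition and invoking the standard composition property of continuation maps (a concatenation of homotopies induces the composite of the continuation maps, and that concatenation is homotopic rel endpoints to $H_\bullet$) identifies the composite with $[\Phi^{H_-,H_+}]$, which is therefore an isomorphism.

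The main obstacle is that the intermediate Hamiltonians $H_s$ need not be admissible: as $\tau_s$ moves from $\tau_{H_-}$ to $\tau_{H_+}$ it may pass through $\Spec(\partial D,\alpha)$, at which moments $H_s$ carries an entire cylinder of Reeb-type $1$-periodic orbits at infinity and $\Hf^\ast(H_s)$ is not defined. What makes the argument go through is that all of these orbits share a single action value (equal to $-\eta_s$, by the computation of Remark~\ref{rem:Action_formula_cylindrical_part} on the affine part of $h_s$), so that once this value is clear of $[a_s,b_s]$ — which is forced, together with regularity of the endpoints $H_\pm$, by the hypothesis $a_s,b_s\notin\Spec(H_s)$ — the windowed complex stays finite-dimensional, can be computed after a small admissible perturbation, and varies continuously in $s$. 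The remaining delicate point is the purely local claim that a sufficiently $C^2$-small monotone sub-homotopy between (perturbed) admissible Hamiltonians, whose moving window avoids the relevant action spectra throughout, induces a filtered isomorphism; this is a standard but careful compactness and crossing-energy estimate, and everything else is bookkeeping.
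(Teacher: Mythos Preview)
The paper does not prove this lemma; it is simply cited from \cite[Proposition~1.1]{Vi99}. Your outline is the standard route --- partition the homotopy into $C^2$-small pieces, show each piece respects the moving filtration, show each piece induces a filtered isomorphism, and compose --- and is essentially Viterbo's original argument.

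One point deserves care. In step~(ii) you write that ``the reverse sub-homotopy, also monotone, provides an inverse''. This is correct only when the slope is constant on that piece: for any piece on which $\tau_s$ strictly decreases, the reverse homotopy has $\partial_s\tau_s>0$ and is \emph{not} monotone in the sense of the paper, so Corollary~\ref{cor:No_escape} does not apply and the reverse continuation map is not a priori defined. Your phrase ``runs the same `small homotopy, no crossing' argument there'' glosses over exactly this. The substantive content of Viterbo's proposition is precisely the local claim you isolate in your final paragraph: a sufficiently $C^2$-small monotone sub-homotopy whose moving window avoids the spectrum induces a filtered isomorphism. That is proved either by a direct bifurcation analysis (generators are created or destroyed only in cancelling pairs strictly inside or strictly outside the window), or by observing that when the slope stays within a single component of $(0,\infty)\setminus\Spec(\partial D,\alpha)$ the reverse homotopy, though not monotone, still enjoys a no-escape property and hence defines an inverse. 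Your overall structure is sound, but the sentence invoking the reverse as monotone should be replaced by an explicit appeal to that local lemma rather than presented as a consequence of Lemma~\ref{lem:Same_Slope}.
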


\subsection{Filtered Symplectic cohomology\label{subsec:Filtered-Symplectic-cohomology}}

Equip the set of admissible Hamiltonians $\mathcal{H}^{0}$ negative
on $D$ with the partial order 
\begin{align*}
H\preceq K\iff H(t,p)\leq K(t,p)\quad & \forall(t,p)\in S^{1}\times\hat{D}.
\end{align*}
Let $\{H_{i}\}_{i\in I}\subset\mathcal{H}^{0}$ be a cofinal sequence
with respect to $\preceq$. We define the \emph{symplectic cohomology
of $D$ }as the direct limit 
\begin{align*}
\SH_{(a,b)}^{\ast}(D)=\lim_{\substack{\longrightarrow\\
H_{i}
}
}\Hf_{(a,b)}^{\ast}(H_{i})
\end{align*}
taken with respect to the continuation maps 
\begin{align*}
[\Phi_{(a,b)}^{H_{j},H_{i}}] & :\Hf_{(a,b)}^{\ast}(H_{i})\to\Hf_{(a,b)}^{\ast}(H_{j})
\end{align*}
for $i<j$. We denote $\SH^{\ast}(D)=\SH_{(-\infty,+\infty)}^{\ast}(D)$.
The long exact sequence on Floer cohomology carries through the direct
limit and we also have a long exact sequence on symplectic cohomology
\[
\begin{tikzcd}[column sep = 25pt, row sep = 30pt]
	\SH^\ast_{(a,b)}(D) \arrow{dr}{[\iota_{a,a}^{b,c}]} 
&	
\\
&\SH^\ast_{(a,c)}(D) \arrow{dl}{[\pi_{a,b}^{c,c}]} 
\\
	\SH^\ast_{(b, c)}(D) \arrow{uu}{[+1]}
	&
\end{tikzcd}
\]

\noindent\textbf{The Viterbo map.--}Let $F\in\mathcal{H}$ and consider
$H\in\mathcal{H}^{0}$ with $\tau_{H}=\tau_{F}$. Then, by Lemma \ref{lem:Same_Slope},
we have $\Hf^{\ast}(F)\cong\Hf^{\ast}(H)$ and there exist, by definition
of symplectic cohomology, a map 
\begin{align}
j_{F}:\Hf^{\ast}(F)\cong\Hf^{\ast}(H)\to\SH^{\ast}(D)\label{eq:j}
\end{align}
sending each element of $\Hf^{\ast}(H)$ to its equivalence class.
Now, for $H\in\mathcal{H}^{0}$ with slope $\tau_{H}<T_{0}$ we can
define, by Lemma \ref{lem:Standard_cohomology} the map $v^{\ast}:\HH^{\ast}(D)\to\SH^{\ast}(D)$
first introduced in \cite{Vi99} by \[
	\begin{tikzcd}
	\label{Diag:viterbo}
		\HH^\ast(D)\arrow{r}{\cong}\arrow[bend right= 25, swap]{rr}{v^\ast}&
		\Hf^\ast(H)\arrow{r}{j_F}&
		\SH^\ast(D)
	\end{tikzcd}
\]This map induces a unit on symplectic cohomology. Recall that $1_{H}$
denotes the unit in $\Hf^{\ast}(H)$ (see Remark \ref{rem:Unit}). 
\begin{thm}[\cite{Ri13}]
\label{thm:The-ring-structure}The ring structure on $\Hf^{\ast}(H)$
induces a ring structure on $\SH^{\ast}(D)$. The unit on $\SH^{\ast}(D)$
is given by the image of the unit $e_{D}\in\HH^{\ast}(D)$ under the
map $v^{\ast}$. Moreover, 
\begin{align*}
v^{\ast}(e_{D})=[\iota_{-\infty,-\infty}^{\varepsilon,+\infty}](1_{H}).
\end{align*}
\end{thm}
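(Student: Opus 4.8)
The plan is to reduce everything to the pair-of-pants product on Floer cohomology and then transport it through the colimit defining $\SH^{\ast}(D)$, following Ritter. First I would recall the construction of the product: given admissible $H_{1},H_{2}$, fix an admissible $H_{3}$ with $\tau_{H_{3}}\geq\tau_{H_{1}}+\tau_{H_{2}}$ together with a domain-dependent Hamiltonian perturbation on the thrice-punctured sphere agreeing with $H_{1},H_{2}$ near the two input punctures and with $H_{3}$ near the output puncture, radial outside a compact set, and monotone in the surface coordinate. The generalized maximum principle applied to this radial datum — exactly as in Corollary~\ref{cor:No_escape} — confines the relevant moduli spaces to a compact part of $\hat D$, so the usual transversality and compactness package produces a chain map $\CF^{\ast}(H_{1})\otimes\CF^{\ast}(H_{2})\to\CF^{\ast}(H_{3})$. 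On cohomology, comparing different slopes via Lemma~\ref{lem:Same_Slope}, this yields a product $\Hf^{\ast}(H_{1})\otimes\Hf^{\ast}(H_{2})\to\Hf^{\ast}(H_{3})$, and homotopies of the surface data show it is associative, graded-commutative, and strictly compatible with the continuation maps $[\Phi^{K,F}]$.

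With that compatibility in hand, the product descends to $\SH^{\ast}(D)$: for a cofinal sequence $\{H_{i}\}$, classes $a\in\Hf^{\ast}(H_{i})$ and $b\in\Hf^{\ast}(H_{j})$ can be multiplied inside $\Hf^{\ast}(H_{k})$ for $k$ large (choosing $\tau_{H_{k}}\geq\tau_{H_{i}}+\tau_{H_{j}}$), and since the product commutes with the structure maps of the direct system the resulting element of the colimit is well defined, with associativity and commutativity inherited. For the unit, on each level $\Hf^{\ast}(H)\cong\HH^{\ast}(D)$ (small slope) the class $1_{H}=\Phi_{H}(e_{D})$ is the product unit by the standard cap argument, where one input puncture of the pair of pants is capped off by a disk carrying the fundamental class; since $j_{F}$ intertwines products, $v^{\ast}(e_{D})=j_{F}(1_{H})$ is a unit for the product on $\SH^{\ast}(D)$, and uniqueness of a two-sided unit identifies it with the ring unit.

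For the filtered identity $v^{\ast}(e_{D})=[\iota_{-\infty,-\infty}^{\varepsilon,+\infty}](1_{H})$ I would take $H\in\mathcal{H}^{0}$ that is $C^{2}$-small with slope $\tau_{H}<T_{0}$. Then, since no Reeb orbit has period below $T_{0}$, $\mathcal{P}(H)$ consists only of the constant orbits at the critical points of $H$ in $D$, whose actions $-H(p)$ all lie in $(-\eta,\varepsilon)$ for $\eta,\varepsilon$ small. Hence $\CF^{\ast}_{<\varepsilon}(H)=\CF^{\ast}(H)$, so $\iota_{-\infty,-\infty}^{\varepsilon,+\infty}$ is an isomorphism at this level, the isomorphism $\Phi_{H}$ of Lemma~\ref{lem:Standard_cohomology} survives truncation, and $1_{H}$ already represents a class in $\Hf^{\ast}_{(-\infty,\varepsilon)}(H)$. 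Because the direct limits and the inclusion maps commute (diagram \eqref{cont_proj_res_comm}), passing to the colimit shows $v^{\ast}$ factors as $\HH^{\ast}(D)\xrightarrow{\cong}\Hf^{\ast}_{(-\infty,\varepsilon)}(H)\to\SH^{\ast}_{(-\infty,\varepsilon)}(D)\xrightarrow{[\iota_{-\infty,-\infty}^{\varepsilon,+\infty}]}\SH^{\ast}(D)$, which is precisely the asserted formula (here $1_{H}$ denotes the image in $\SH^{\ast}_{(-\infty,\varepsilon)}(D)$ of the unit of such an $H$).

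The main obstacle lies entirely in the first two steps: establishing the pair-of-pants product as a genuine, choice-independent operation that is \emph{strictly} compatible with continuation maps and unital. This needs domain-dependent transversality, the maximum-principle compactness for surface-dependent radial Hamiltonians, and the gluing arguments on moduli of pairs of pants and cylinders yielding associativity and the cap relation. All of this is carried out in detail in \cite{Ri13}, so in the write-up I would cite it for the structural statements and only spell out the short filtered refinement of the last paragraph.
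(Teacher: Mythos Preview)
The paper does not supply its own proof of this theorem; it is stated with attribution to \cite{Ri13} and used as a black box. Your proposal is therefore strictly more than what the paper does: you sketch Ritter's construction of the pair-of-pants product, its compatibility with continuation maps, and the descent to the colimit, and you correctly conclude that the heavy analytic work should simply be cited from \cite{Ri13}. That is exactly the paper's stance.

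Your additional paragraph deriving the filtered identity $v^{\ast}(e_{D})=[\iota_{-\infty,-\infty}^{\varepsilon,+\infty}](1_{H})$ from the observation that, for a $C^{2}$-small $H\in\mathcal{H}^{0}$ with slope below $T_{0}$, all generators have action in $(-\eta,\varepsilon)$ so the truncation map is an isomorphism at that level, is correct and is not spelled out in the paper either. So there is nothing to compare against, and no gap in your outline.
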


\section{Spectral invariants and spectral norm\label{sec:Spectral-invariants}}

\subsection{Spectral invariants}

Denote by $\Ham_{c}(D,\dd\lambda)$ the group of compactly supported
Hamiltonian diffeomorphisms of $(D,\dd\lambda)$ and by $\Symp_{c}(D,\dd\lambda)$
the group of compactly supported symplectomorphisms of $(D,\dd\lambda)$.
The Hofer norm of a compactly supported Hamiltonian $H\in\mathcal{C}(D)$
is defined as 
\begin{align*}
\norm{H} & =\int_{0}^{1}\left(\sup_{p\in D}H(t,p)-\inf_{p\in D}H(t,p)\right)\,\dd t.
\end{align*}
Using the Hofer norm, we can define a bi-invariant metric \cite{Ho90,LaMc95}
on $\Ham_{c}(D,\dd\lambda)$ by 
\begin{align*}
d_{H}(\varphi,\psi)=d_{H}(\varphi\psi^{-1},\id),\quad d_{H}(\varphi,\id)=\inf\{\norm{H}\mid\varphi=\varphi_{H}\}.
\end{align*}

Recall that $\mathcal{C}(D)$ forms a group under the multiplication
\begin{align*}
H\#K(t,p)= & H(t,p)+K(t,(\varphi_{H}^{t})^{-1}(p))
\end{align*}
with the inverse of some $H\in\mathcal{C}(D)$ given by $\overline{H}(t,p)=-H(t,\varphi_{H}^{t}(p))$. 

From Lemma \ref{lem:Standard_cohomology} and by definition of $\Hf^{\ast}(H)$
for $H\in\mathcal{C}(D)$, we know that $\Hf^{\ast}(H)\cong\HH^{\ast}(D)$.
For $\beta\in\HH^{\ast}(D)$, we define, following \cite{Sc00}, the
\emph{spectral invariant of} $H$ \emph{relative to $\beta$ as}
\begin{align*}
c(\beta,H)=\inf\{\ell\in\RR\mid[\pi_{-\infty,\ell}^{+\infty,+\infty}]\comp[\iota_{-\infty,-\infty}^{\ell,+\infty}](\beta)=0\}.
\end{align*}
The following proposition gathers all the properties of spectral invariants
we need for the rest of the text. Proofs of these properties can be
found\footnote{Note that the signs for continuity and monoticity differ from \cite[ Section 5]{FrSc07}
because of differences in sign conventions.} in \cite[ Section 5]{FrSc07}.
\begin{prop}
\label{prop:Spectral_invariants}Let $\beta,\eta\in H^{\ast}(D)$
and let $H,K\in\mathcal{C}(D)$. Then, 
\begin{itemize}
\item \emph{{[}Continuity{]} }
\begin{align*}
\int_{0}^{1}\min_{x\in D}(K-H)\dd t\leq c(\beta,H)-c(\beta,K)\leq\int_{0}^{1}\max_{x\in D}(K-H)\dd t
\end{align*}
\item \emph{{[}Spectrality{]}} $c(\beta,H)\in\Spec(H)$.
\item \emph{{[}Triangle inequality{]}} $c(\beta\smile\eta,H\#K)\leq c(\beta,H)+c(\eta,K).$
\item \emph{{[}Monotonicity{]} If $H(t,x)\leq K(t,x)$ for all $(t,x)\in[0,1]\times D$,
then $c(\beta,H)\geq c(\beta,K)$. }
\end{itemize}
\end{prop}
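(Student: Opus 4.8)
The plan is to establish the four properties in the order in which they feed into one another: first finiteness and spectrality, then monotonicity, then continuity (as a corollary of monotonicity together with a Hamiltonian-shift identity), and finally the triangle inequality, which is the only part requiring genuinely new analysis. Throughout I would use the description of the invariant from the introduction, $c(\beta,H)=\inf\{\ell\in\RR\mid\Phi_H(\beta)\in\im\,\iota^{<\ell}\}$ with $\iota^{<\ell}\colon\Hf^\ast_{(-\infty,\ell)}(H)\to\Hf^\ast(H)$ induced by inclusion of subcomplexes; note that $S_\beta(H)=\{\ell:\Phi_H(\beta)\in\im\,\iota^{<\ell}\}$ is an up-set in $\RR$ with infimum $c(\beta,H)$, and that by the long exact sequence $\ell\in S_\beta(H)$ iff $\Phi_H(\beta)$ maps to $0$ in $\Hf^\ast_{(\ell,+\infty)}(H)$.

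For \emph{finiteness and spectrality}: using the small-slope extension $H^\tau$ ($0<\tau<T_0$) to compute $\Hf^\ast(H)$, all $1$-periodic orbits of $H^\tau$ lie in a fixed compact subset of $\hat D$, so $\Spec(H)$ is bounded. When $\ell>\max\Spec(H)$ the map $\iota^{<\ell}$ is an isomorphism, hence $\ell\in S_\beta(H)$ and $c(\beta,H)\le\max\Spec(H)<+\infty$; when $\ell\ll0$ we have $\Hf^\ast_{(-\infty,\ell)}(H)=0$, so $\ell\notin S_\beta(H)$ for $\beta\ne0$ and $c(\beta,H)>-\infty$. Finally, on each connected component of $\RR\setminus\Spec(H)$ the complex $\CF^\ast_{(-\infty,\ell)}(H^\tau)$ and the map $\iota^{<\ell}$ are constant, so $S_\beta(H)$ is open; since $\Spec(H)$ is closed and nowhere dense (Section~\ref{subsec:Completion-of-Liouville}), $\inf S_\beta(H)$ cannot lie in the complement, hence $c(\beta,H)\in\Spec(H)$.

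For \emph{monotonicity}, suppose $H\le K$ on $S^1\times D$. Extend both with the \emph{same} slope $0<\tau<T_0$; since the extension outside $D$ depends only on the slope, $H^\tau\le K^\tau$ holds pointwise on all of $\hat D$, so there is a monotone homotopy from $H^\tau$ to $K^\tau$. By Lemma~\ref{lem:Same_Slope} the continuation map $[\Phi^{K^\tau,H^\tau}]\colon\Hf^\ast(H)\to\Hf^\ast(K)$ is an isomorphism carrying $\Phi_H(\beta)$ to $\Phi_K(\beta)$ (both being images of $\beta$ under the identifications with $\HH^\ast(D)$), and as recalled in Section~\ref{subsec:Continuation-maps} a monotone continuation map decreases the action, so it restricts to filtered maps commuting with the inclusions $\iota^{<\ell}$. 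Hence $S_\beta(H)\subseteq S_\beta(K)$, and taking infima gives $c(\beta,K)\le c(\beta,H)$. For \emph{continuity} I would combine this with the shift identity $c(\beta,H+c(t))=c(\beta,H)-\int_0^1 c(t)\,dt$ for $c\in C^\infty(S^1)$ — immediate since $\mathcal A_{H+c(t)}=\mathcal A_H-\int_0^1 c$ on orbits while the Hamiltonian vector field and Floer data are unchanged — applied with $c(t)=\min_{x\in D}(K-H)(t,x)$ and with $c(t)=\max_{x\in D}(K-H)(t,x)$; bracketing $K$ between $H+\min_x(K-H)$ and $H+\max_x(K-H)$ and invoking monotonicity then yields exactly the claimed two-sided estimate.

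For the \emph{triangle inequality} I would construct the pair-of-pants product at the filtered cochain level: a solution of the relevant three-punctured-sphere equation, with input orbits $x,y$ and output orbit $z$, satisfies an action inequality of the form $\mathcal A(z)\le\mathcal A(x)+\mathcal A(y)$ up to a curvature term that can be arranged nonpositive, so the product restricts to maps $\Hf^\ast_{(-\infty,a)}(H)\otimes\Hf^\ast_{(-\infty,b)}(K)\to\Hf^\ast_{(-\infty,a+b)}(H\#K)$ commuting with the inclusions $\iota^{<\bullet}$; under $\Hf^\ast\cong\HH^\ast(D)$ this product is identified with the cup product, so $\Phi_H(\beta)\in\im\,\iota^{<a}$ and $\Phi_K(\eta)\in\im\,\iota^{<b}$ force $\Phi_{H\#K}(\beta\smile\eta)\in\im\,\iota^{<a+b}$, whence $c(\beta\smile\eta,H\#K)\le c(\beta,H)+c(\eta,K)$. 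The main obstacle is precisely this last point: one must set up the moduli spaces of pair-of-pants solutions on the non-compact completion $\hat D$, establish transversality, and — the crucial step — prove compactness by adapting the maximum principle and the energy--action bound of Lemma~\ref{lem:energy_act_estimates} so these curves stay in a compact region; once that is in place the filtration behaviour and the remainder of the statement are formal. Full details are in \cite[Section~5]{FrSc07}.
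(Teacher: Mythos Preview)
The paper does not actually prove this proposition; it simply refers the reader to \cite[Section~5]{FrSc07}, which is also the reference you close with. Your outline is correct and follows the standard route found there. Two small remarks. First, your citation of Section~\ref{subsec:Completion-of-Liouville} for ``$\Spec(H)$ closed and nowhere dense'' is misplaced: that section concerns the Reeb spectrum $\Spec(\partial D,\alpha)$, not the Hamiltonian action spectrum; for a non-degenerate extension $H^\tau$ the set $\Spec(H)$ is in any case finite, which is all your spectrality argument uses. Second, in the continuity step the function $H+c(t)$ is no longer in $\mathcal C(D)$, so the monotonicity statement as you have phrased it does not literally apply. The fix is to run the comparison at the level of the admissible extensions: since $H$ and $K$ are compactly supported in $D\setminus\partial D$ one has $\min_{x\in D}(K-H)(t,x)\le 0\le \max_{x\in D}(K-H)(t,x)$, and because $H^\tau=K^\tau$ outside $D$ this gives $H^\tau+c(t)\le K^\tau$ (respectively $K^\tau\le H^\tau+c'(t)$) on all of $\hat D$; your monotonicity argument via Lemma~\ref{lem:Same_Slope} and action-decreasing continuation then applies to these admissible Hamiltonians, and the shift identity for adding a function of $t$ alone holds for the same reason you gave.
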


\begin{rem}
The continuity property of Proposition \ref{prop:Spectral_invariants}
allows us to define spectral invariants of compactly supported continuous
Hamiltonians $H\in C_{c}^{0}([0,1]\times D)$. They satisfy continuity,
the triangle inequality and monotonicity. 
\end{rem}

\subsubsection{Additional properties of $c$}

The following lemma assures us that spectral invariants are well defined
on $\Ham_{c}(D,\dd\lambda)$. The proof relies on the spectrality
and the triangle inequality.
\begin{lem}
\label{lem:spectral_inv_only_depend}Let $H,K\in\mathcal{C}(D)$ such
that $\varphi_{H}=\varphi_{K}$ and let $\beta\in\HH^{\ast}(D)$.
Then, 
\begin{align*}
c(\beta,H)=c(\beta,K)
\end{align*}
\end{lem}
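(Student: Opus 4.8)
The statement to prove is Lemma~\ref{lem:spectral_inv_only_depend}: if $H,K\in\mathcal{C}(D)$ generate the same time-one map $\varphi_H=\varphi_K$, then $c(\beta,H)=c(\beta,K)$ for every $\beta\in\HH^\ast(D)$. The plan is to follow the classical argument of Schwarz (adapted to the Liouville setting by Frauenfelder--Schlenk), which exploits only the spectrality and triangle inequality from Proposition~\ref{prop:Spectral_invariants} together with a connectedness/continuity trick in the Hamiltonian.

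First I would reduce to the case where $H$ and $K$ are homotopic through compactly supported Hamiltonians generating loops based at $\id$. Since $\varphi_H=\varphi_K$, the Hamiltonian $G=K\#\overline{H}$ generates the loop $\varphi_K^t\circ(\varphi_H^t)^{-1}\cdot(\text{correction})$; more precisely one arranges that $\varphi_G^1=\id$. The triangle inequality gives $c(\beta,K)=c(\beta\smile 1, G\# H)\le c(1,G)+c(\beta,H)$ and symmetrically $c(\beta,H)\le c(1,\overline{G})+c(\beta,K)$, so it suffices to show $c(1,G)=0$ (and $c(1,\overline G)=0$) whenever $\varphi_G^1=\id$; one then needs $c(1,G)+c(1,\overline G)\le 0$, hence both are $\le 0$ after combining, but the cleanest route is to prove directly that $c(1,G)=0$ for $G$ generating a loop.

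To show $c(1,G)=0$ for $G\in\mathcal{C}(D)$ with $\varphi_G^1=\id$, I would use the standard rescaling homotopy: set $G_s=sG$ (or a suitable reparametrisation $G^{(s)}(t,p)=sG(st,p)$ adjusted to stay compactly supported and to still generate a loop for each $s\in[0,1]$), which interpolates between $G_0=0$ and $G_1=G$ through Hamiltonians all generating loops. By the continuity property, $s\mapsto c(1,G_s)$ is continuous. By spectrality, $c(1,G_s)\in\Spec(G_s)$, and for a Hamiltonian generating a loop the action spectrum is the set of symplectic areas of the capping of the loops, which (in the exact setting of a Liouville domain, using $\mathbb{Z}_2$ coefficients, and with $\pi_2(\hat D)$ controlled) is a fixed countable nowhere-dense set independent of $s$ up to the action shift coming from rescaling — in fact for the exact manifold the relevant action values of the constant orbits are all $0$ since $G_s$ is compactly supported, so $\Spec(G_s)$ contains $0$ and near $s=0$ the value $c(1,G_s)$ is forced to lie in a totally disconnected set while varying continuously from $c(1,0)=0$; hence it is constantly $0$. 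This is the only delicate point.

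The main obstacle is precisely this spectrality-plus-connectedness step: one must verify that the action spectrum $\Spec(G_s)$ of a loop-generating compactly supported Hamiltonian on $\hat D$ is a measure-zero (or at least totally disconnected) subset of $\RR$ that does not jump around as $s$ varies, so that a continuous selection inside it is constant. In the exact/aspherical Liouville setting this follows because all contractible capped orbits of a Hamiltonian generating a loop have actions in a fixed discrete set (here, since $G_s$ is compactly supported and the symplectic form is exact with no sphere bubbling obstruction, the spectrum is even just $\{0\}$ together with finitely many critical values that degenerate as $s\to 0$), but making this rigorous requires carefully tracking the action of $1$-periodic orbits under the rescaling, using Remark~\ref{rem:Action_formula_cylindrical_part} and Lemma~\ref{lem:Finite_periodic_orbits} to ensure finiteness and boundedness of $\Spec(G_s)$ on compact $s$-intervals. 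Once that is in hand, continuity forces $c(1,G_s)\equiv c(1,0)=0$ (using $c(1,0)=0$, which follows from $\Phi_H$ sending the unit to the unit and the definition of $c$), and feeding this back into the triangle inequalities above yields $c(\beta,H)=c(\beta,K)$.
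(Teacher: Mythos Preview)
Your reduction to showing $c(1,G)=0$ for a compactly supported $G$ with $\varphi_G^1=\id$, via the triangle inequality and spectrality, is exactly the paper's strategy. The difference lies in how you prove $c(1,G)=0$, and here your argument has a real gap.

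The paper simply asserts that $\Spec(H\#\overline{K})=\{0\}$ and then applies spectrality directly. This assertion is the key point you are missing, and it is elementary on an exact symplectic manifold: since $\varphi_G^1=\id$, every point $p$ gives a $1$-periodic orbit $x_p(t)=\varphi_G^t(p)$, and the function $p\mapsto \mathcal{A}_G(x_p)=\int_0^1\lambda(\dot x_p)\,dt-\int_0^1 G_t(x_p)\,dt$ is smooth with vanishing differential (differentiating under the integral and using $\dot x_p=X_G$ together with $x_p(0)=x_p(1)$ kills both terms). Hence $\mathcal{A}_G(x_p)$ is constant, and for $p$ outside the support it equals $0$. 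So $\Spec(G)=\{0\}$, and spectrality gives $c(1,G)=0$ in one line. No homotopy is needed.

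Your proposed homotopy route, by contrast, does not go through as written. Neither $G_s=sG$ nor the reparametrisation $G^{(s)}(t,p)=sG(st,p)$ generates a loop for $s\in(0,1)$: the first gives $\varphi_{sG}^1$, which is generically not $\id$, and the second gives $\varphi_G^s$, which is $\id$ only at $s=0,1$. Producing a genuine homotopy through loop-generating Hamiltonians is tantamount to showing the loop $t\mapsto\varphi_G^t$ is contractible in $\Ham_c(D)$, a nontrivial statement you do not establish. Without it, your continuity-plus-nowhere-dense-spectrum argument cannot start: for intermediate $s$ the spectrum $\Spec(G_s)$ need not be contained in any fixed totally disconnected set, so a continuous selection can drift. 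You yourself flag this as the main obstacle; the paper's approach removes the obstacle entirely by computing $\Spec(G)$ directly.
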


\begin{proof}
We have $\varphi_{H\#\overline{K}}=\varphi_{0}=\id$ and in that case
$\Spec(H\#\overline{K})=\{0\}$. Now, by spectrality of spectral invariants,
$c(\beta,H\#\overline{K})=0$. Thus, the triangle inequality yields
\begin{align*}
c(\beta,H)=c(\beta,H\#\overline{K}\#K)\leq c(\beta,H\#\overline{K})+c(\beta,K)=c(\beta,K).
\end{align*}
Repeating the same argument with $K\#\overline{H}$ instead of $H\#\overline{K}$,
we obtain $c(\beta,K)\leq c(\beta,H)$ which concludes the proof. 
\end{proof}
The spectral invariant with respect to the cohomological unit admits
an implicit definition which depends on the spectral invariants with
respect to all other cohomology classes in $\HH^{\ast}(D)$. This
follows directly from the triangle inequality. 
\begin{lem}
\label{lem:Def_implicite_c(1,H)}Let $H\in\mathcal{C}(D)$. Then,
\begin{align*}
c(1,H)=\max_{\beta\in\HH^{\ast}(D)}c(\beta,H).
\end{align*}
\end{lem}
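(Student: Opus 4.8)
The plan is to read the identity off directly from the triangle inequality, using that $1\in\HH^\ast(D)$ is the unit for the cup product.

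First I would record the elementary fact that $c(\beta,0)=0$ for every $\beta\in\HH^\ast(D)$, where $0\in\mathcal{C}(D)$ denotes the zero Hamiltonian. Since $\varphi_0=\id$, the very same reasoning used in the proof of Lemma \ref{lem:spectral_inv_only_depend} applies: the action spectrum of the zero Hamiltonian is $\{0\}$, so spectrality from Proposition \ref{prop:Spectral_invariants} forces $c(\beta,0)=0$.

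Next I would apply the triangle inequality of Proposition \ref{prop:Spectral_invariants} with the decompositions $\beta = 1\smile\beta$ on the cohomology side and $H = H\#0$ on the Hamiltonian side, which gives
\[
c(\beta,H)=c(1\smile\beta,\,H\#0)\;\le\;c(1,H)+c(\beta,0)=c(1,H)
\]
for every $\beta\in\HH^\ast(D)$. Since $\HH^\ast(D)$ is a finite-dimensional $\mathbb{Z}_2$-vector space, the maximum $\max_{\beta}c(\beta,H)$ is attained, and the displayed inequality shows it is bounded above by $c(1,H)$. For the reverse inequality there is nothing to do: $1$ is itself one of the classes over which the maximum is taken, so $c(1,H)\le\max_{\beta}c(\beta,H)$ automatically. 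Combining the two inequalities yields the claim.

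I do not expect a genuine obstacle here; the content is entirely formal. The only points that deserve a word of justification are the bookkeeping identities $H\#0=H$ and $1\smile\beta=\beta$, and the normalization $c(\beta,0)=0$, all of which are immediate from the definitions and the properties already established.
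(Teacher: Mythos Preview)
Your proof is correct and follows essentially the same route as the paper: both arguments use the normalization $c(\beta,0)=0$ together with the triangle inequality applied to the decomposition of $\beta$ as a cup product with the unit and of $H$ as a concatenation with the zero Hamiltonian. The only cosmetic difference is that the paper writes $\beta=\beta\smile 1$ and $H=0\#H$ whereas you use $\beta=1\smile\beta$ and $H=H\#0$; you also make the trivial reverse inequality explicit, which the paper leaves implicit.
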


\begin{proof}
Let $\beta\in H^{\ast}(D)$. By definition of the unit and the concatenation
of Hamiltonians, we have 
\begin{align*}
c(\beta,H)=c(\beta\smile1,H)=c(\beta\smile1,0\#H).
\end{align*}
Then, since $c(\beta,0)=0$, the triangle inequality guaranties that
\begin{align*}
c(\beta,H)=c(\beta\smile1,0\#H)\leq c(\beta,0)+c(1,H)=c(1,H).
\end{align*}
The choice of $\beta$ being arbitrary, this concludes the proof.
\end{proof}

\subsubsection{The symplectic contraction principle}

We conclude this section by recalling the symplectic contraction technique
introduced by Polterovich \cite[Section 5.4]{Po14}. This principle
allows one to describe the effect of the Liouville flow $\{\psi_{Y}^{\log r}\}_{0<r<1}$
on spectral invariants. 

First, we need to describe how the Liouville flow acts on the symplectic
form $\omega$ of $D$ and on compactly supported Hamiltonians on
$D$. Since $L_{Y}\omega=\omega$, we have that the Liouville flow
contracts the symplectic form :
\begin{align*}
\left(\psi_{Y}^{\log r}\right)^{\ast}\omega=r\omega
\end{align*}
Now, consider a Hamiltonian $H\in\mathcal{C}(D)$ supported in $U\subset D$.
For fixed $0<r<1$ define the Hamiltonian 
\begin{align}
H_{r}(t,x)=\begin{cases}
rH\left(t,\left(\psi_{Y}^{\log r}\right)^{-1}(x)\right) & \text{if }x\in\psi_{Y}^{\log r}(U),\\
0 & \text{if }x\notin\psi_{Y}^{\log r}(U).
\end{cases}\label{eq:Hr}
\end{align}
It then follows from the two previous equations that $\Spec(H_{r})=r\Spec(H)$.
This allows one to prove
\begin{lem}[\cite{Po14}]
\label{lem:Contraction_principle}Suppose $H\in\mathcal{C}(D)$ and
let $H_{r}\in\mathcal{C}(D)$ be as in Equation \ref{eq:Hr}. Then,
\begin{align*}
c(1,H_{r})=rc(1,H).
\end{align*}
\end{lem}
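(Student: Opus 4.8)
The plan is to deduce the claim from the scaling identity $\Spec(H_r)=r\Spec(H)$ recorded just above, using only the continuity and spectrality properties of Proposition~\ref{prop:Spectral_invariants} together with a genericity reduction. First I would promote $H_r$ to a family: for $s\in(0,1]$ let $H_s\in\mathcal{C}(D)$ be given by the same formula~(\ref{eq:Hr}), so that $H_1=H$. Written on all of $\hat D$ (extending $H$ by zero), $H_s=s\cdot(H\circ\psi_Y^{-\log s})$, which is smooth in $(s,x)$; hence $s\mapsto H_s$ is a smooth, in particular $C^0$-continuous, path in $\mathcal{C}(D)$. The same computation that yields $\Spec(H_r)=r\Spec(H)$ applies verbatim to each $H_s$: from $(\psi_Y^{\log s})^\ast\hat\lambda=s\hat\lambda$ and $(\psi_Y^{\log s})^\ast\omega=s\omega$ one gets $X_{H_s}=(\psi_Y^{\log s})_\ast X_H$, so $\varphi_{H_s}^t=\psi_Y^{\log s}\circ\varphi_H^t\circ\psi_Y^{-\log s}$, hence $\mathcal{P}(H_s)=\psi_Y^{\log s}(\mathcal{P}(H))$ and $\mathcal{A}_{H_s}(\psi_Y^{\log s}\circ x)=s\,\mathcal{A}_H(x)$; therefore $\Spec(H_s)=s\Spec(H)$ for every $s\in(0,1]$.

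Next I would combine spectrality with continuity. By the spectrality property, $c(1,H_s)\in\Spec(H_s)=s\Spec(H)$, so the function $f(s):=c(1,H_s)/s$ takes values in $\Spec(H)$ for every $s\in(0,1]$. By the continuity property of Proposition~\ref{prop:Spectral_invariants}, for all $s,s'\in(0,1]$
\[
\abs{c(1,H_s)-c(1,H_{s'})}\le\int_0^1\max_{x\in D}\abs{H_s(t,x)-H_{s'}(t,x)}\,\dd t ,
\]
and the right-hand side tends to $0$ as $s\to s'$ because $s\mapsto H_s$ is $C^0$-continuous; hence $f$ is continuous on the interval $(0,1]$.

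Now I would first treat the case where the $1$-periodic orbits of $X_H$ are isolated, equivalently where every fixed point of $\varphi_H^1$ is nondegenerate, which holds for a $C^\infty$-dense set of Hamiltonians in $\mathcal{C}(D)$. For such $H$ the set $\mathcal{P}(H)$ is finite, so $\Spec(H)$ is a finite subset of $\RR$; being a continuous function on the connected set $(0,1]$ with values in a finite set, $f$ must be constant, so $f(r)=f(1)=c(1,H)$, i.e.\ $c(1,H_r)=r\,c(1,H)$. For a general $H\in\mathcal{C}(D)$ I would pick $H_n\to H$ in $C^0$ with each $H_n$ of the above type; from~(\ref{eq:Hr}) one reads off the exact identity $\norm{(H_n)_r-H_r}_{C^0}=r\norm{H_n-H}_{C^0}$ (both differences equal $r\,(H_n-H)\circ\psi_Y^{-\log r}$, and $\psi_Y^{-\log r}$ is a bijection of $\hat D$), so $C^0$-continuity of $c(1,\cdot)$ gives $c(1,H_r)=\lim_n c(1,(H_n)_r)=\lim_n r\,c(1,H_n)=r\,c(1,H)$.

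The only step that is not a mechanical manipulation is the passage from ``$f$ continuous and $\Spec(H)$ finite'' to ``$f$ constant'': this is where spectrality does real work, upgrading the soft continuity bound to exact linear scaling, and it is what forces the preliminary genericity reduction. Everything else is bookkeeping with~(\ref{eq:Hr}) and the flow of $Y$. Alternatively, one could argue directly on the chain level, using $\psi_Y^{\log s}$ to identify the filtered Floer complex of $H_s^\tau$ with that of $H^{\tau'}$ with the action window scaled by $s$, after the corresponding rescaling of the admissible almost complex structure and of the affine slope, and checking that the unit is carried to the unit; but keeping track of the two extensions to $\hat D$ makes this route heavier than the one above.
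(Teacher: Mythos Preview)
Your argument is correct and is precisely the standard continuity-plus-spectrality proof of the contraction principle; the paper does not give its own proof of this lemma but simply cites \cite{Po14}, where the same argument appears. One very minor quibble: ``isolated'' and ``nondegenerate'' are not literally equivalent, but since you only use that nondegenerate Hamiltonians are $C^\infty$-dense and have finite spectrum, this does not affect the argument; alternatively, you could skip the genericity reduction entirely by invoking the general fact that $\Spec(H)$ is a measure-zero subset of $\RR$ for every $H\in\mathcal{C}(D)$, so the image of the continuous map $f:(0,1]\to\Spec(H)$ is a sub-interval of a measure-zero set and hence a point.
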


\subsection{Spectral norm}

We define the \emph{spectral norm $\gamma(H)$ of }$H\in\mathcal{C}(D)$
as 
\begin{align*}
\gamma(H)=c(1,H)+c(1,\overline{H}).
\end{align*}
For $\varphi\in\Ham_{c}(D,\dd\lambda)$ such that $\varphi=\varphi_{H}$,
define
\begin{align*}
\gamma(\varphi)=\gamma(H)
\end{align*}
In virtue of Lemma \ref{lem:spectral_inv_only_depend}, this is well
defined. 

From \cite[ Section 7]{FrSc07}, we have the following theorem which
justifies calling $\gamma$ a norm.
\begin{thm}
Let $\varphi,\psi\in\Ham_{c}(D,\dd\lambda)$ and let $\chi\in\Symp_{c}(D,\dd\lambda)$.
Then,
\begin{itemize}
\item \emph{{[}Non-degeneracy{]}} $\gamma(\id)=0$ and $\gamma(\varphi)>0$
if $\gamma\neq\id$,
\item \emph{{[}Triangle inequality{]}} $\gamma(\varphi\psi)\leq\gamma(\varphi)+\gamma(\psi)$,
\item \emph{{[}Symplectic invariance{]}} $\gamma(\chi\comp\varphi\comp\chi^{-1})=\gamma(\varphi)$,
\item \emph{{[}Symmetry{]}} $\gamma(\varphi)=\gamma(\varphi^{-1})$,
\item \emph{{[}Hofer bound{]}} $\gamma(\varphi)\leq d_{H}(\varphi,\id)$. 
\end{itemize}
\end{thm}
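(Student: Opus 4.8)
The plan is to derive all five properties from the properties of the spectral invariants $c(\beta,\cdot)$ collected in Proposition~\ref{prop:Spectral_invariants}, together with Lemma~\ref{lem:spectral_inv_only_depend}, which ensures that $\gamma$ descends to $\Ham_c(D,\dd\lambda)$; this is essentially the argument of \cite[Section~7]{FrSc07}. Write $\varphi=\varphi_H$ and $\psi=\varphi_K$ for $H,K\in\mathcal C(D)$. I will repeatedly use that $\overline H$ generates $\varphi^{-1}$, that $H\#K$ generates $\varphi\psi$, that $H\#\overline H\equiv 0$ generates $\id$, and that $\overline{H\#K}$ and $\overline K\#\overline H$ both generate $(\varphi\psi)^{-1}$; by Lemma~\ref{lem:spectral_inv_only_depend} all the spectral invariants below depend only on the indicated time-one map.

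Four of the five properties reduce to bookkeeping with Proposition~\ref{prop:Spectral_invariants}. Since $\Spec(0)=\{0\}$, spectrality forces $c(1,0)=0$, hence $\gamma(\id)=0$; and the general inequality $\gamma(\varphi)\ge 0$ follows from the triangle inequality applied to $H\#\overline H$, namely $0=c(1,0)=c(1\smile 1,H\#\overline H)\le c(1,H)+c(1,\overline H)=\gamma(\varphi)$. For symmetry, $\varphi^{-1}=\varphi_{\overline H}$ while $\overline{\overline H}$ generates $\varphi$, so $\gamma(\varphi^{-1})=c(1,\overline H)+c(1,\overline{\overline H})=c(1,\overline H)+c(1,H)=\gamma(\varphi)$. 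For the triangle inequality, apply the triangle inequality of Proposition~\ref{prop:Spectral_invariants} with $\beta=\eta=1$ to $H\#K$ and to $\overline K\#\overline H$ and sum, obtaining $\gamma(\varphi\psi)\le\bigl(c(1,H)+c(1,K)\bigr)+\bigl(c(1,\overline K)+c(1,\overline H)\bigr)=\gamma(\varphi)+\gamma(\psi)$. For the Hofer bound, fix $H$ with $\varphi=\varphi_H$; the continuity estimate with the zero Hamiltonian, together with $c(1,0)=0$, gives $c(1,H)\le-\int_0^1\min_D H_t\,\dd t$, and since $\overline H_t=-H_t\comp\varphi_H^t$ has $\min_D\overline H_t=-\max_D H_t$, also $c(1,\overline H)\le\int_0^1\max_D H_t\,\dd t$; adding these yields $\gamma(\varphi)\le\norm{H}$, and taking the infimum over all such $H$ gives $\gamma(\varphi)\le d_H(\varphi,\id)$.

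For the symplectic invariance I would use that $\chi\varphi_H\chi^{-1}$ is generated by the compactly supported Hamiltonian $H\comp\chi^{-1}$, and that the Hamiltonian action of a contractible loop $x$ can be written using $\omega$ alone, as $\mathcal A_H(x)=\int u^\ast\omega-\int_0^1 H_t(x(t))\,\dd t$ for any capping disk $u$ of $x$ (by Stokes, since $\hat\lambda$ is a primitive of $\omega$ and $\omega$ is exact on $\hat D$). Because $\chi$ is symplectic, the correspondence $x\mapsto\chi\comp x$, $u\mapsto\chi\comp u$ then preserves the action, conjugates the Floer equation for $(H,J)$ into the one for $(H\comp\chi^{-1},\chi_\ast J)$, and intertwines the isomorphisms of Lemma~\ref{lem:Standard_cohomology}, in particular matching the cohomological units (the ring automorphism of $\HH^\ast(D)$ induced by $\chi$ fixes $1$). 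Hence $c(1,H\comp\chi^{-1})=c(1,H)$, and likewise for the inverse map, so $\gamma(\chi\varphi\chi^{-1})=\gamma(\varphi)$.

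The genuine difficulty is the strict positivity, $\gamma(\varphi)>0$ when $\varphi\ne\id$, for which I would invoke the standard displacement argument of \cite[Section~7]{FrSc07} (going back to \cite{Sc00,Oh05} in the closed case). One chooses $p\in D\setminus\del D$ with $\varphi(p)\ne p$ and a ball $B\ni p$ displaced by $\varphi$, takes a suitable autonomous $G\ge 0$ supported in $B$ whose filtered Floer theory gives $c(1,G)>0$ (hence $\gamma(\varphi_G)>0$, using $c(1,\overline G)\ge 0$ from the bound already obtained), and then applies a spectral energy--capacity inequality: since $\varphi$ displaces $\operatorname{supp}(G)$, one has $\gamma(\varphi_G)\le 2\gamma(\varphi)$, whence $\gamma(\varphi)>0$. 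Establishing this energy--capacity inequality --- one direction being immediate from the triangle inequality and symplectic invariance applied to the commutator $[\varphi_G,\varphi]$, the other resting on a locality property of $\gamma$ for disjointly supported maps --- is the only step I expect to require real work; the remaining four properties are routine manipulations with Proposition~\ref{prop:Spectral_invariants}.
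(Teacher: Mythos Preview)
The paper does not give its own proof of this theorem; it simply cites \cite[Section~7]{FrSc07}. Your proposal correctly reconstructs the standard argument from that source: four of the five properties follow by routine bookkeeping with Proposition~\ref{prop:Spectral_invariants} and Lemma~\ref{lem:spectral_inv_only_depend}, and you rightly flag the strict positivity in the non-degeneracy clause as the only step requiring real work (the displacement/energy--capacity argument). One minor sign slip in that sketch: with the paper's conventions monotonicity reverses inequalities, so a bump $G\ge 0$ yields $c(1,G)\le c(1,0)=0$; you want $G\le 0$ supported in the displaced ball to force $c(1,G)>0$. Otherwise your outline matches the cited reference.
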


\section{Cohomological barricades on Liouville domains}

In \cite{GaTa20} Ganor and Tanny introduced a particular perturbation
of Hamiltonians compactly supported inside \emph{contact incompressible
boundary }domains (CIB) of closed aspherical symplectic manifolds.
For instance, if $U\subset M$ is an incompressible open set which
is a Liouville domain, then $U$ is a CIB. In Floer homology, the
aforementioned Hamiltonian perturbation, which is called a barricade,
prohibits the existence of Floer trajectories exiting and entering
the CIB. We consider barricades in the particular case of Liouville
domains and adapt them to Floer cohomology. 
\begin{defn}
\label{def:Baricade}Let $r_{0}>1$ and $0<\varepsilon<r_{0}-1$.
Define $B_{r_{0},\varepsilon}=D^{r_{0}-\varepsilon}\setminus D$ where,
for $\rho>0$, $D^{\rho}=\Psi_{Y}^{\log\rho}(D)$. Suppose $(F_{\bullet},J)$
is a pair of a monotone homotopy $F_{\bullet}$ from $F_{+}$ to $F_{-}$
and an admissible almost complex structure $J$. We say that $(F_{\bullet},J)$
admits a barricade on $B_{r_{0},\varepsilon}$ if for every $x_{\pm}\in\mathcal{\mathcal{P}}(F_{\pm})$
and every Floer trajectory $u:\RR\times S^{1}\to\hat{D}$ connecting
$x_{\pm}$, we have, for $\Db:=D^{r_{0}-\varepsilon}=D\cup B_{r_{0},\varepsilon}$
\begin{enumerate}
\item If $x_{-}\in D$, then $\im(u)\subset D$,
\item If $x_{+}\in D_{b}$, then $\im(u)\subset\Db$. 
\end{enumerate}
\end{defn}

\begin{figure}[H]
\begin{centering}
\includegraphics{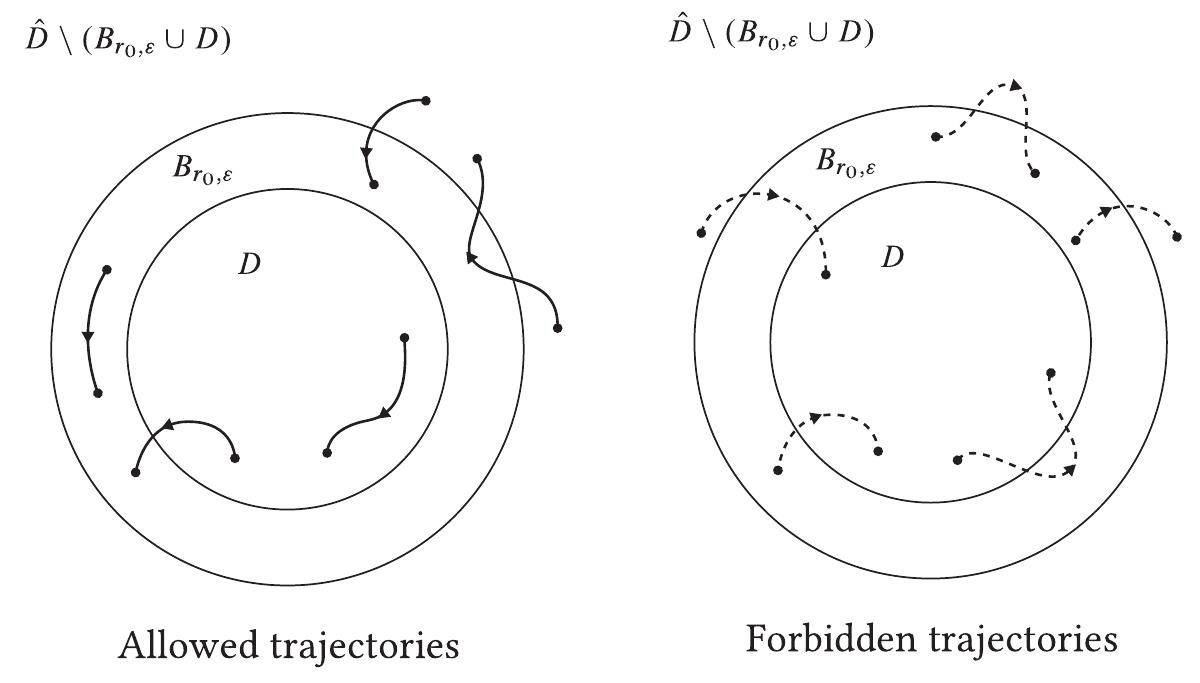}
\par\end{centering}
\caption{Floer cylinders in a barricade.}
\end{figure}

\begin{rem}
In the language of \cite{GaTa20}, a barricade on $B_{r_{0},\varepsilon}$
as described above would be called a barricade in $D^{r_{0}-\varepsilon}$
around $D$. 
\end{rem}

\subsection{How to construct barricades}

To construct barricades, we consider the following class of pairs. 
\begin{defn}
Let $r_{0}>1$, $\sigma\in(0,+\infty)\setminus\Spec(\partial D,\lambda)$
and $0<\varepsilon<r_{0}-1$. The pair $(F_{\bullet},J)$ admits a
cylindrical bump of slope $\sigma$ on $B_{r_{0},\varepsilon}$ if 
\begin{itemize}
\item $F=0$ on $\partial B_{r_{0},\varepsilon}\times S^{1}\times\RR$,
\item $JY=R_{\alpha}$, for $Y$ the Liouville vector field on $D$, on
a neighborhood of $\partial B_{r_{0},\varepsilon}$, i.e. $J$ is
cylindrical near $\partial B_{r_{0},\varepsilon}=\partial D\sqcup(\{r_{0}-\varepsilon\}\times\partial D)$.
\item $\nabla_{J}F=\sigma Y$ near $(\{1\}\times\partial D)\times S^{1}\times\RR$
and $\nabla_{J}F=-\sigma Y$ near $(\{r_{0}-\varepsilon\}\times\partial D)\times S^{1}\times\RR$.
Here, $\nabla_{J}$ denotes the gradient induced by the metric $g_{J}$.
\item All 1-periodic orbits of $F_{\pm}$ contained in $B_{r_{0},\varepsilon}$
are critical points with values in the interval $(-\sigma,\sigma)$.
(In particular, $\sigma<T_{0}$.) 
\end{itemize}
A cohomological adaptation of Lemma 3.3 in \cite{GaTa20} yields the
following action estimates for pairs with cylindrical bumps. 
\end{defn}

\begin{lem}
\label{lem:act_estimates}Suppose that $(F,J)$ admits a cylindrical
bump of slope $\sigma$ on $B_{r_{0},\varepsilon}$. For every finite
energy solution $u$ connecting $x_{\pm}\in\mathcal{P}(F_{\pm})$,
then 
\begin{itemize}
\item $x_{-}\subset D$ and $x_{+}\subset\hat{D}\setminus D$ $\implies$
$\mathcal{A}_{F_{+}}(x_{+})>\sigma$,
\item $x_{+}\subset D$ and $x_{-}\subset\hat{D}\setminus D$ $\implies$
$\mathcal{A}_{F_{-}}(x_{-})>\sigma$, 
\item $x_{-}\subset\Db$ and $x_{+}\subset\hat{D}\setminus\Db$ $\implies$
$\mathcal{A}_{F_{+}}(x_{+})<-\sigma$,
\item $x_{+}\subset\Db$ and $x_{-}\subset\hat{D}\setminus\Db$ $\implies$
$\mathcal{A}_{F_{-}}(x_{-})<-\sigma$. 
\end{itemize}
\end{lem}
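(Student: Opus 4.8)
\emph{Proof strategy.} The statement is the cohomological counterpart of \cite[Lemma 3.3]{GaTa20}, and the plan is to transcribe that argument into our conventions — in which, by Lemma~\ref{lem:energy_act_estimates}, the action \emph{increases} along Floer and $s$-dependent Floer cylinders. Two ingredients are needed. The first is a \emph{normal form near the two walls.} From $g_J(\nabla_J F_s,\cdot)=dF_s(\cdot)$ together with $X_{F_s}\iprod\omega=-dF_s$ one gets $X_{F_s}=J\nabla_J F_s$; on the one-sided collar of $\{1\}\times\partial D$ on which $\nabla_J F_s=\sigma Y$ and $J$ is cylindrical ($JY=R_\alpha$) this reads $X_{F_s}=\sigma R_\alpha$, and since $F_s$ vanishes on $\partial D$, integrating in the Liouville direction gives $F_s(r,x)=\sigma(r-1)$ there, for every $s$. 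Symmetrically, on the collar of $\{r_0-\varepsilon\}\times\partial D$ one finds $F_s(r,x)=\sigma\bigl((r_0-\varepsilon)-r\bigr)$, again with $J$ of contact type. Thus near each wall $\Sigma$ — either $\partial D$, viewed as the boundary of the Liouville subdomain $D$, or $\partial\Db$ — the pair $(F_s,J)$ is an honest admissible pair of slope $\pm\sigma$ with $F_s|_\Sigma\equiv 0$, and since $\sigma\notin\Spec(\partial D,\lambda)$ the $F_s$ have no $1$-periodic orbit in the open collars.

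\emph{The second ingredient is an integrated maximum principle.} Fix a finite-energy solution $u$ connecting $x_\pm\in\mathcal{P}(F_\pm)$; in each of the four cases exactly one orbit lies ``inside'' a region ($D$ in the first two bullets, $\Db$ in the last two) and the other ``outside'', so $u$ must cross the relevant wall $\Sigma$. Let $\Omega\subset\RR\times S^1$ be the $u$-preimage of the component of $\hat D\setminus\Sigma$ containing the outside orbit; by the asymptotics $\Omega$ contains a half-tube. The generalized maximum principle of \cite{Vi99} recalled above — valid for monotone homotopies by the Remark following it — shows that $r\circ u$ has no interior extremum of the forbidden type in the collar, so that $\partial\Omega\subset u^{-1}(\Sigma)$ is a compact $1$-manifold crossed transversally by $u$. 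One then runs Stokes' theorem on $\Omega$ with the primitive $\hat\lambda$ of $\omega$: the Floer equation rewrites the bulk term as the energy $\int_\Omega\abs{\partial_s u}_J^2\ge 0$ of $u|_\Omega$, plus the orbit term $\pm\int_0^1 F_\pm\!\circ x_\pm\,dt$ (the vanishing $F_s|_\Sigma\equiv 0$ kills all the intermediate contributions of the telescoping sum, leaving only the asymptotic end), plus the homotopy term $-\int_\Omega(\partial_s F_s)\circ u$; combining with the energy/monotonicity estimate of Lemma~\ref{lem:energy_act_estimates} one is reduced to the \emph{boundary integral} $\int_{u^{-1}(\Sigma)}u^*\hat\lambda$. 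This is controlled pointwise: on $\Sigma$ one has $X_{F_s}=\pm\sigma R_\alpha$ and $J$ is of contact type, so the Floer equation forces $\alpha(\partial_t u)-(\pm\sigma)$ to keep a fixed sign along $u^{-1}(\Sigma)$ — the sign dictated by which side of $\Sigma$ the region $\Omega$ lies on — whence the boundary integral equals, up to the correct sign, at least $\sigma$ times the (positive) number of crossings; strictness comes from the maximum principle, which excludes $\alpha(\partial_t u)\equiv\pm\sigma$. Assembling the pieces gives $\mathcal{A}_{F_+}(x_+)>\sigma$ in the first two bullets and $\mathcal{A}_{F_+}(x_+)<-\sigma$ (resp.\ $\mathcal{A}_{F_-}(x_-)<-\sigma$) in the last two. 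Finally, since $1$-periodic orbits of $F_\pm$ inside $B_{r_0,\varepsilon}$ are critical points whose action lies in $(-\sigma,\sigma)$, no finite-energy solution can connect $D$ (resp.\ the complement of $\Db$) to such an orbit, so the estimates above are never vacuously violated.

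\emph{The main obstacle} is the pointwise control of the boundary integral $\int_{u^{-1}(\Sigma)}u^*\hat\lambda$: this is the integrated maximum principle proper, the place where the contact-type condition on $J$, the exact radial form of $F_s$ near $\Sigma$, the vanishing $F_s|_\Sigma=0$ and the hypothesis $\sigma\notin\Spec(\partial D,\lambda)$ are used simultaneously. The second delicate point is pure bookkeeping: because our conventions reverse the direction of the action filtration relative to \cite{GaTa20}, one must orient $\Omega$, its boundary and the half-tube carefully so that the inequalities come out consistently in all four cases — in particular in the two bullets where the ``outside'' orbit is the $s\to-\infty$ limit, where the signs of the energy and homotopy terms enter differently.
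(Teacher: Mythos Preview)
Your approach is exactly the one the paper intends: the paper does not give a proof but simply says the lemma is ``a cohomological adaptation of Lemma~3.3 in \cite{GaTa20}'', and your sketch is precisely that adaptation --- the normal form $F_s=\pm\sigma(r-r_\ast)$ on the two collars, followed by the integrated maximum principle (Stokes on $\Omega=u^{-1}(\text{outside region})$, with the contact-type condition on $J$ controlling the boundary integral $\int_{u^{-1}(\Sigma)}u^*\hat\lambda$).

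One small slip to fix: you write ``Assembling the pieces gives $\mathcal{A}_{F_+}(x_+)>\sigma$ in the first two bullets'', but in the second bullet the \emph{outside} orbit is $x_-$, so the conclusion there is $\mathcal{A}_{F_-}(x_-)>\sigma$, exactly as you handle bullets~3/4 with your ``resp.'' --- this is only the bookkeeping you yourself flag as delicate, not a gap in the argument.
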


Lemma \ref{lem:act_estimates} and the maximum principle are all we
need to prove that every pair with a cylindrical bump admits a barricade.
More precisely, we have the 
\begin{prop}
\label{prop:Cylindrical_barricade}Let $(F,J)$ be a pair with a cylindrical
bump of slope $\sigma$ on $B_{r_{0},\varepsilon}$. Then, $(F,J)$
admits a barricade on $B_{r_{0},\varepsilon}$.
\end{prop}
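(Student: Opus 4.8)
The plan is to show that a cylindrical bump forces every Floer trajectory between orbits of $F_\pm$ to respect the two nested regions $D$ and $\Db = D^{r_0-\varepsilon}$, using the two ingredients advertised: the maximum principle (Lemma on the generalized maximum principle, in its homotopy version) and the action estimates of Lemma \ref{lem:act_estimates}. The argument is a case analysis on where $u$ can travel, and the cylindrical bump is engineered so that ``crossing'' a boundary component costs too much action to be compatible with the energy being finite and non-negative.

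\medskip

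\textbf{Step 1: Reduce statement (1) to an action obstruction.} Suppose $x_- \in D$ and, for contradiction, $\im(u) \not\subset D$. Then $u$ enters $\hat D \setminus D$. Since $u$ converges to $x_-$ as $s\to -\infty$ and $x_-\subset D$, and since the asymptotic orbit $x_+$ either lies in $D$ or in $\hat D \setminus D$, I would split into the two subcases. If $x_+\subset D$: then $u$ both leaves $D$ and returns to $D$, so the $r$-coordinate $r\circ v$ of the portion $v$ of $u$ lying in $\hat D\setminus D$ attains an interior maximum $>1$; the generalized maximum principle (homotopy version) forces $r\circ v$ to be constant, which contradicts the fact that $v$ has endpoints on $\{r=1\}=\partial D$ and an interior point with $r>1$. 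So the trajectory cannot leave and come back. If $x_+\subset \hat D\setminus D$: then $u$ passes from $D$ to $\hat D\setminus D$, so the third bullet — rather, the \emph{first} bullet — of Lemma \ref{lem:act_estimates} applies and gives $\mathcal{A}_{F_+}(x_+) > \sigma$. But the last bullet of the cylindrical-bump definition requires every $1$-periodic orbit of $F_\pm$ contained in $B_{r_0,\varepsilon}$ to be a critical point with value in $(-\sigma,\sigma)$, and orbits in $\hat D\setminus \Db$ are constrained by the admissibility/slope conditions; I need to rule out $\mathcal{A}_{F_+}(x_+)>\sigma$ for the relevant $x_+$. Here is the subtlety: $x_+$ might live in $\hat D\setminus \Db$ rather than in $B_{r_0,\varepsilon}$. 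I would handle this by noting that if $x_+\subset \hat D\setminus \Db$, then $u$ \emph{also} crosses the outer boundary $\{r_0-\varepsilon\}\times\partial D$, so the \emph{third} bullet of Lemma \ref{lem:act_estimates} gives $\mathcal{A}_{F_+}(x_+) < -\sigma$, contradicting $\mathcal{A}_{F_+}(x_+)>\sigma$ from the first bullet. Thus in all cases we reach a contradiction, establishing (1).

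\medskip

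\textbf{Step 2: Statement (2) by the same dichotomy applied at the outer wall.} Now suppose $x_+\subset \Db$ and, for contradiction, $\im(u)\not\subset\Db$, i.e.\ $u$ reaches $\hat D\setminus \Db$. Converging to $x_+\subset\Db$ as $s\to+\infty$, the trajectory must come back into $\Db$ from outside; if it also stays eventually inside $\Db$ near $s=-\infty$ (i.e.\ $x_-\subset\Db$), then $r\circ v$ on the outer cylindrical region $\hat D\setminus\Db$ attains an interior maximum, again contradicting the maximum principle. If instead $x_-\subset \hat D\setminus\Db$, then $u$ travels from $\hat D\setminus\Db$ into $\Db$, so the \emph{fourth} bullet of Lemma \ref{lem:act_estimates} applies: $\mathcal{A}_{F_-}(x_-)<-\sigma$. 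I then need to derive a contradiction; since $x_-\subset\hat D\setminus\Db$ and $F$ has the prescribed affine/convex shape there, the action $\mathcal{A}_{F_-}(x_-)=A_{F_-}(r\circ x_-)$ computed as in Remark \ref{rem:Action_formula_cylindrical_part} is nonnegative (the ``$y$-intercept'' of the tangent line is $\le 0$ for a convex increasing $h$ with $h(1)=0$), contradicting $<-\sigma<0$. This closes (2).

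\medskip

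\textbf{Main obstacle.} The genuinely delicate point is bookkeeping the case analysis so that \emph{every} way a trajectory could violate (1) or (2) is caught either by the maximum principle (when the trajectory exits a region and returns) or by one of the four action estimates (when it makes a net crossing between $x_-$ and $x_+$ on opposite sides of a wall), and then checking that each action estimate collides with an a priori bound on $\mathcal{A}_{F_\pm}(x_\pm)$ coming from where $x_\pm$ sits — either the $(-\sigma,\sigma)$ constraint for orbits inside $B_{r_0,\varepsilon}$, the positivity of the action formula $A_F(r)=rh'(r)-h(r)$ for orbits in the affine cylindrical end, or a second, opposite-sign action estimate forced by a double crossing. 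I expect the write-up to essentially mirror the proof of the corresponding statement in \cite[Section 3]{GaTa20}, with the only real work being to transpose the action inequalities from the homological to the cohomological convention (which reverses the direction of the filtration) and to confirm that nothing in that transposition breaks the sign pattern in Lemma \ref{lem:act_estimates}.
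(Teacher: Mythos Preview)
Your proof is correct and uses the same two ingredients as the paper's (Lemma \ref{lem:act_estimates} plus the maximum principle), but the paper's organization is tighter: rather than assuming $\im u \not\subset D$ and splitting on where $x_+$ lies, it first uses the action estimate alone to force $x_+\in D$ (resp.\ $x_-\in\Db$), and only then applies the maximum principle once, via Corollary \ref{cor:No_escape}, to conclude $\im u\subset D$ (resp.\ $\Db$)---this absorbs your separate ``leave and return'' subcase entirely. On the other hand, you are more careful than the paper about the possibility that the offending endpoint lies in $\hat D\setminus\Db$: the paper simply asserts that all orbits on $\hat D\setminus D$ have action in $(-\sigma,\sigma)$ ``by construction of the cylindrical bump,'' whereas you supply an explicit double-crossing argument in Step~1 (combining the first and third bullets of Lemma \ref{lem:act_estimates} to get the contradictory pair $\mathcal{A}_{F_+}(x_+)>\sigma$ and $<-\sigma$) and an action-formula argument in Step~2. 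Note that your Step~2 argument tacitly assumes a convex increasing radial profile for $F_-$ on $\hat D\setminus\Db$, which is not literally part of the cylindrical-bump definition but does hold for the admissible Hamiltonians actually in play.
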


\begin{proof}
Suppose $u:\RR\times S^{1}\to\hat{D}$ is a Floer trajectory between
$x_{\pm}\in\mathcal{P}(F_{\pm})$. We only need to study the case
where $x_{-}\in D$ and the case where $x_{+}\in\Db$. 

Suppose that $x_{-}\in D$. We first establish that $x_{+}$ must
lie inside $D$. Indeed, if $x_{+}\in\hat{D}\setminus D$, Lemma \ref{lem:act_estimates}
assures us that $\mathcal{A}_{F_{+}}(x_{+})>\sigma$ which contradicts
the fact that orbits on $\hat{D}\setminus D$ must have action in
the interval $(-\sigma,\sigma)$ by the construction of the cylindrical
bump. Therefore, $x_{+}\in D$ as desired. Now, since $x_{\pm}\in D$,
the maximum principle guarantees that $\im u\subset D$. 

To finish the proof, we look at the case where $x_{+}\in\Db$. Similarly
to the previous case, we prove that $x_{-}$ also lies inside $\Db$.
If $x_{-}\in\hat{D}\setminus\Db$, Lemma \ref{lem:act_estimates}
imposes $\mathcal{A}_{F_{-}}(x_{-})<-\sigma$, which is again impossible
by construction of the cylindrical bump. Therefore, $x_{-}\in\Db$
and the maximum principle implies $\im u\subset\Db$. 
\end{proof}
Given a pair $(F,J)$ and $\sigma>0$ small, we can add to $F$ a
$\mathcal{C}^{\infty}$-small radial bump function $\chi$ with support
inside $B_{r_{0},\varepsilon}$ such that $(F+\chi,J)$ has a cylindrical
bump of slope $\sigma$ on $B_{r_{0},\varepsilon}$. By Proposition
\ref{prop:Cylindrical_barricade}, the perturbed pair will also admit
a barricade on $B_{r_{0},\varepsilon}$. A second perturbation of
the Hamiltonian term at its ends, under which the barricade survives,
allows us to achieve Floer regularity for the pair. This procedure
is carried out carefully in \cite[section 9]{GaTa20} and yields the
following. 
\begin{thm}[\cite{GaTa20}]
Let $F_{\bullet}$ be a monotone homotopy. Then, there exists a $\mathcal{{C}}^{\infty}$-small
perturbation $f_{\bullet}$ of $F_{\bullet}$ and an almost complex
structure $J$ such that the pairs $(f_{\bullet},J)$ and $(f_{\pm},J)$
are Floer-regular and have a barricade on $B_{r_{0},\varepsilon}$.
\end{thm}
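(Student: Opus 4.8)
The strategy is the one indicated in the discussion preceding the statement: install a cylindrical bump by a first $C^\infty$-small perturbation, apply Proposition \ref{prop:Cylindrical_barricade} to obtain a barricade, and then perturb a second time, away from the bump region, to gain Floer regularity without destroying the barricade. Let me indicate how I would make each step precise.

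First I would fix a slope $\sigma\in(0,T_0)\setminus\Spec(\partial D,\alpha)$ small enough that near the two boundary components of $B_{r_0,\varepsilon}=D^{r_0-\varepsilon}\setminus D$ the hypotheses of the cylindrical bump definition can be arranged. Near $\partial B_{r_0,\varepsilon}=\partial D\sqcup(\{r_0-\varepsilon\}\times\partial D)$ I would take $J$ cylindrical, i.e. $JY=R_\alpha$: on the outer component this is automatic for an admissible $J$, and near the inner component one modifies $J$ in a collar of $\partial D$, which is harmless. Then I would add to $F_\bullet$ a $C^\infty$-small radial function $\chi$ supported in the interior of $B_{r_0,\varepsilon}$, chosen so that $\nabla_J(F_\pm+\chi)=\sigma Y$ near $\{1\}\times\partial D$, $\nabla_J(F_\pm+\chi)=-\sigma Y$ near $\{r_0-\varepsilon\}\times\partial D$, and $F_\bullet+\chi$ vanishes on $\partial B_{r_0,\varepsilon}\times S^1\times\RR$. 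Since $\chi$ is $C^\infty$-small and $\sigma<T_0$, every $1$-periodic orbit of $F_\pm+\chi$ inside $B_{r_0,\varepsilon}$ is a critical point with value in $(-\sigma,\sigma)$. Hence $(F_\bullet+\chi,J)$ and its ends $(F_\pm+\chi,J)$ admit cylindrical bumps of slope $\sigma$ on $B_{r_0,\varepsilon}$, and Proposition \ref{prop:Cylindrical_barricade} gives each of them a barricade there.

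Second, I would upgrade to a Floer-regular pair by a further $C^\infty$-small perturbation. The key observation is that the barricade is robust: the conclusion of Proposition \ref{prop:Cylindrical_barricade} rests only on the action estimates of Lemma \ref{lem:act_estimates} and the generalized maximum principle, and both remain valid as long as the four conditions defining the cylindrical bump survive — in particular under any $C^\infty$-small perturbation of the Hamiltonian supported away from a neighborhood of $\partial B_{r_0,\varepsilon}$ and any perturbation of $J$ that keeps it cylindrical near $\partial B_{r_0,\varepsilon}$ with $\nabla_J(F_\pm+\chi)=\pm\sigma Y$ near the two boundary components. So I would run the standard Floer--Hofer--Salamon transversality scheme, as in \cite[Chapter 11]{AuDa14}, with the regularizing perturbations constrained to that region, first for the two ends — producing $f_\pm$ — and then for an interpolating homotopy $f_\bullet$ limiting to $f_\pm$, so that transversality holds for all the $0$- and $1$-dimensional moduli spaces entering the differentials of $\CF^\ast(f_\pm)$ and the continuation map induced by $f_\bullet$. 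Compactness of these spaces is unaffected, since Corollary \ref{cor:No_escape} still confines every trajectory to a fixed $D^R$, and the barricade persists by the robustness just noted.

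The step I expect to be the real obstacle is this last one: one must check that abstract transversality can still be reached once the support of the regularizing perturbations is restricted to the complement of a neighborhood of $\partial B_{r_0,\varepsilon}$, and that this can be carried out compatibly for $f_\bullet$, $f_+$ and $f_-$ at once while leaving the cylindrical bump in place. This is precisely the content of the careful construction in \cite[Section 9]{GaTa20}, which transfers verbatim to the cohomological conventions used here; I would invoke it rather than reprove it.
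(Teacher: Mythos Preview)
Your proposal is correct and follows exactly the approach the paper itself takes: the paper does not prove this theorem but sketches the two-step procedure (add a $C^\infty$-small bump to obtain a cylindrical bump and hence a barricade via Proposition \ref{prop:Cylindrical_barricade}, then perturb for Floer regularity while preserving the barricade) and defers the delicate second step to \cite[Section 9]{GaTa20}. Your write-up fleshes out this sketch faithfully and, like the paper, invokes \cite{GaTa20} for the transversality-with-constraints argument rather than reproving it.
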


\subsection{Decomposition of the Floer cochain complex}

\label{subsec:Decomp_CF}

Let us investigate what structure barricades impose on the Floer co-chain
complex. Let $H\in\mathcal{H}_{r_{0}}$ and suppose the pair $(H,J)$
admits a barricade on $B_{r_{0},\varepsilon}$. For an open subset
$U\subset\hat{D}$, denote by $\CO^{\ast}(U,H)$ the set of $1$-periodic
orbits of $H$ in $U$. By definition of the differential $\partial$
on Floer cohomology, $\CO^{\ast}(\Db,H)$ is closed under $\partial$
and it therefore forms a sub-complex of $\CF^{\ast}(H)$. Moreover,
for $\Dc=\hat{D}\setminus\Db$, we also have that 
\begin{align*}
\CO^{\ast}(\Dc,H) & =\frac{\CF^{\ast}(H)}{\CO^{\ast}(\Db,H)}
\end{align*}
is a well defined cochain complex.

\subsubsection{Continuation maps}

Let $(F_{\bullet},J)$ be a pair that admits a barricade on $B_{r_{0},\varepsilon}$
where $F_{\bullet}$ is a monotone homotopy from $F_{+}$ to $F_{-}$.
Then, since the continuation map $\Phi_{F_{\bullet}}:\CF^{\ast}(F_{+})\to\CF^{\ast}(F_{-})$
counts Floer trajectories of $F$ connecting 1-periodic orbits of
$F_{+}$ to 1-periodic orbits of $F_{-}$, it restricts, due to the
barricade, to a chain map

\begin{align*}
\Phib_{F}:\CO^{\ast}(\Db,F_{+})\to\CO^{\ast}(\Db,F_{-}) & .
\end{align*}
Moreover, in virtue of Lemma \ref{lem:map_quotient} below, $\Phi_{F}$
projects to a chain map 
\begin{align*}
\Phic_{F} & :\CO^{\ast}(\Dc,F_{+})\to\CO^{\ast}(\Dc,F_{-})
\end{align*}
 such that the following diagram commutes 

\[
\begin{tikzcd}[column sep = 30pt, row sep = 30pt]
	\CF^\ast(F_+)
		\arrow{r}{\Phi_{F_\bullet}}
		\arrow{d}[swap]{\pi_+^{\text{b}}}
	&
	\CF^\ast(F_-)
		\arrow{d}{\pi_-^{\text{b}}}
	\\
	\CO^\ast(\Dc, F_+) \arrow{r}[swap]{\Phic_{F_\bullet}}
	&
	\CO^\ast(\Dc, F_-)
\end{tikzcd}
\]for $\pi_{+}^{\text{b}}$ and $\pi_{-}^{\text{b}}$ the canonical
projections. 

\subsubsection{Chain homotopies}

For $F_{\pm}\in\mathcal{H}_{r_{0}}$ , consider the linear homotopy
\begin{align*}
F_{s} & =F_{-}+\beta(s)(F_{+}-F_{-})
\end{align*}
where $\beta:\RR\to[0,1]$ is a smooth function such that $\beta(s)=0$
for $s\leq-1$, $\beta(s)=1$ for $s\geq1$ and $\beta'(s)>0$ for
all $s\in(-1,1)$. Denote by $\overline{F}_{\bullet}$ the inverse
homotopy defined by $\overline{F}_{s}=F_{-s}$. For $\rho>1$ large,
we define the concatenation $F\#\overline{F}_{\bullet}$ as 
\begin{align*}
(F\#\overline{F})_{s} & =\begin{cases}
F_{s+\rho} & \text{for }s\leq0\\
\overline{F}_{s-\rho} & \text{for }s\geq0
\end{cases}.
\end{align*}
Using the definition of $F_{\bullet}$ and $\overline{F}_{\bullet}$,
we can simply write 
\begin{align*}
(F\#\overline{F})_{s} & =F_{-}+\beta_{\rho}(s)(F_{+}-F_{-})
\end{align*}
for $\beta_{\rho}(s)=\beta(-|s|+\rho)$. The homotopy $F\#\overline{F}_{\bullet}$
generates the composition of continuation homomorphisms $\Phi_{F}\comp\Phi_{\overline{F}}:\CF^{\ast}(F_{-})\to\CF^{\ast}(F_{-})$
which is chain homotopic to the identity on $\CF^{\ast}(F_{-})$,
\begin{align*}
\Phi_{F_{\bullet}}\comp\Phi_{\overline{F}_{\bullet}}-\id_{-} & =\partial_{-}\comp\Psi_{-}-\Psi_{-}\comp\partial_{-}
\end{align*}
for $\Psi_{-}:\CF^{\ast}(F_{-})\to\CF^{\ast-1}(F_{-})$ and $\partial_{-}$
the differential on $\CF^{\ast}(F_{-})$. The chain homotopy $\Psi_{-}$
is built by counting Floer solutions of the homotopy $\ens{\Gamma^{\kappa}}_{\kappa\in[0,1]}$
between $F\#\overline{F}_{\bullet}$ and the constant homotopy $F_{-}$
which is defined by 
\begin{align*}
\Gamma_{s}^{\kappa}=F_{-}+\kappa\beta_{\rho}(s)(F_{+}-F_{-}).
\end{align*}
For $x\in\mathcal{P}(F_{-})$ and $y\in\mathcal{P}(F_{+})$, define
\begin{align*}
\mathcal{M}^{\Gamma}(x,y)=\ens{(\kappa,u)\mid\kappa\in[0,1],\ u\in\mathcal{M}(x,y;\Gamma_{\bullet}^{k})}.
\end{align*}
We can perturb $\Gamma$ with a $C^{\infty}$-small function in order
to make it regular \cite[ Chapter 11]{AuDa14}. Now, if $F_{-}$ and
$F_{+}$ admit barricades on $B_{r_{0},\varepsilon}$, solutions to
the parametric Floer equation for $\Gamma^{\kappa}$ also admit barricades
on $B_{r_{0},\varepsilon}$. The same holds with its regular perturbation. 
\begin{lem}
\label{lem:barricade_homotopy2}Let $F_{-},F_{+}\in\mathcal{H}_{r_{0}}$
and suppose they both admit a barricade on $B_{r_{0},\varepsilon}$.
Then, for any $C^{\infty}$-small perturbation $\Gamma'$ of $\Gamma$
which satisfies $\mathcal{P}(F'_{\pm})=\mathcal{P}(F_{\pm})$, Floer
trajectories in $\mathcal{M}^{\Gamma^{\kappa}}$ follow the rules
of the barricade on $B_{r_{0},\varepsilon}$. 
\end{lem}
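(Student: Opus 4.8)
The plan is to deduce the statement from Proposition~\ref{prop:Cylindrical_barricade}, applied for each fixed value of the parameter $\kappa$, once one checks that the cylindrical bump structure producing the barricades on $F_{\pm}$ is inherited by the entire parametric homotopy. Recall that, by construction, each pair $(F_{\pm},J)$ carries a cylindrical bump of a common slope $\sigma$ on $B_{r_{0},\varepsilon}$; fix this data. The key elementary observation is that, for every $\kappa\in[0,1]$ and every $s\in\RR$, the coefficient $t:=\kappa\beta_{\rho}(s)$ lies in $[0,1]$, so
\[
\Gamma^{\kappa}_{s}=(1-t)\,F_{-}+t\,F_{+}
\]
is a genuine convex combination of $F_{-}$ and $F_{+}$. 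The three geometric conditions in the definition of a cylindrical bump --- vanishing on $\partial B_{r_{0},\varepsilon}$, the identity $\nabla_{J}F=\sigma Y$ near the inner component of $\partial B_{r_{0},\varepsilon}$, and $\nabla_{J}F=-\sigma Y$ near the outer one --- are all linear in the Hamiltonian, hence remain valid for each $\Gamma^{\kappa}_{s}$; moreover, since $\beta_{\rho}(s)\to 0$ as $s\to\pm\infty$, both ends of the homotopy $\Gamma^{\kappa}_{\bullet}$ are equal to $F_{-}$, whose $1$-periodic orbits contained in $B_{r_{0},\varepsilon}$ are, by hypothesis, critical points with values in $(-\sigma,\sigma)$. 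Thus $(\Gamma^{\kappa}_{\bullet},J)$ admits a cylindrical bump of slope $\sigma$ on $B_{r_{0},\varepsilon}$, uniformly in $\kappa\in[0,1]$.

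Given this, the argument in the proof of Proposition~\ref{prop:Cylindrical_barricade} applies verbatim to each $(\Gamma^{\kappa}_{\bullet},J)$. Indeed, if a finite-energy solution $u$ of the $s$-dependent Floer equation for $\Gamma^{\kappa}_{\bullet}$ has its negative asymptote in $D$, then a crossing of $B_{r_{0},\varepsilon}$ would, by Lemma~\ref{lem:act_estimates} applied with both end Hamiltonians equal to $F_{-}$, force an asymptotic orbit of $F_{-}$ lying in $\hat{D}\setminus D$ with action outside $(-\sigma,\sigma)$, which is impossible; hence both asymptotes lie in $D$, and the generalized maximum principle --- valid here because $\Gamma^{\kappa}_{s}$ is radial on $\hat{D}\setminus D$ for every $s$, together with the no-escape estimate of Corollary~\ref{cor:No_escape} --- forces $\im u\subset D$. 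The symmetric argument, with $\Db$ in place of $D$, yields the second clause of Definition~\ref{def:Baricade}. Consequently every trajectory occurring in $\mathcal{M}^{\Gamma^{\kappa}}$, for each $\kappa\in[0,1]$, obeys the rules of the barricade on $B_{r_{0},\varepsilon}$.

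It remains to pass to a $C^{\infty}$-small regular perturbation $\Gamma'$. One chooses it, following \cite[Section~9]{GaTa20}, so that ${\Gamma'}^{\kappa}_{s}$ is still radial on $\hat{D}\setminus D$ and agrees with $\Gamma$ on a neighbourhood of $\partial B_{r_{0},\varepsilon}$; then the cylindrical bump structure is left intact, so the maximum principle and the action inequalities of Lemma~\ref{lem:act_estimates} persist, and since the hypothesis $\mathcal{P}(F'_{\pm})=\mathcal{P}(F_{\pm})$ keeps the asymptotic orbit sets --- and hence the range of their actions --- unchanged, the argument of the previous paragraph goes through for $\Gamma'$ as well. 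I expect this last point to be the real obstacle: one has to produce a single perturbation that is simultaneously Floer-regular for the whole family $\{{\Gamma'}^{\kappa}_{\bullet}\}_{\kappa\in[0,1]}$ and for its two ends, preserves the periodic-orbit data at the ends, and does not disturb the bump region. This is precisely the transversality construction carried out carefully in \cite[Section~9]{GaTa20}, which we invoke.
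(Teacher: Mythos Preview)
Your approach is genuinely different from the paper's. The paper argues by Gromov compactness, following \cite[Proposition~9.21]{GaTa20}: one supposes a sequence of regular perturbations $\Gamma_n\to\Gamma$ carries barricade-violating trajectories, extracts a broken Gromov limit, and derives a contradiction from the barricade hypotheses on the ends $F_{\pm}$. Your route is instead to verify directly that each $(\Gamma^{\kappa}_{\bullet},J)$ inherits a cylindrical bump and then invoke Proposition~\ref{prop:Cylindrical_barricade}. The convex-combination observation is clean and correct, and it buys you a more transparent argument for the unperturbed family.

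Two points deserve attention. First, the lemma as stated only assumes that $(F_{\pm},J)$ \emph{admit a barricade}, not that they carry a cylindrical bump; you are importing the latter ``by construction''. This is defensible in the paper's context, since barricades are always produced from bumps in \cite[Section~9]{GaTa20} and the regularising perturbation is supported away from $\partial B_{r_{0},\varepsilon}$, but strictly speaking you are proving a slightly different statement than the one written. Second, and more substantively, your appeal to the generalized maximum principle and Corollary~\ref{cor:No_escape} is delicate: both are formulated in the paper for \emph{monotone} homotopies, whereas $\Gamma^{\kappa}_{\bullet}$ is not monotone (its slope at infinity equals $\tau_{-}+\kappa\beta_{\rho}(s)(\tau_{+}-\tau_{-})$, which decreases for $s<0$ and increases for $s>0$). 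Radial dependence alone does not suffice for the no-escape conclusion. You can repair this by observing that on the bump region $B_{r_{0},\varepsilon}$ the Hamiltonian is $s$-independent, so the local maximum principle there is unconditional; combined with Lemma~\ref{lem:act_estimates} this traps the trajectory, but the argument is not quite ``verbatim'' Proposition~\ref{prop:Cylindrical_barricade} and you should spell it out. The paper's compactness route sidesteps this monotonicity issue by never invoking the maximum principle for $\Gamma^{\kappa}$ directly.
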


\begin{proof}
The proof follows the same ideas as the proof of Proposition 9.21
in \cite{GaTa20}. By Gromov compactness, any sequence $(\kappa_{n},u_{n})\in\mathcal{M}^{\Gamma}(x_{-},y_{+})$
of solutions to the parametric Floer equation converges, up to taking
a subsequence, to a broken trajectory $(\kappa,\bar{v})$ where $\bar{v}=(v_{1},\ldots,v_{k},w,v_{1}',\ldots,v_{\ell}')$
connects two orbits $x_{\pm}\in\mathcal{P}(F_{\pm})$ . The fact that
$F_{\pm}$ both admit a barricade on $B_{r_{0},\varepsilon}$ assures
us that 
\begin{itemize}
\item $x_{-}\in D$ $\implies$ $\bar{v}\subset D$
\item $x_{+}\in D$ $\implies$ $\bar{v}\subset\Db$.
\end{itemize}
Now, consider a sequence of regular homotopies $\ens{\Gamma_{n}}_{n}$
with ends $\lim_{s\to\pm\infty}\Gamma_{s,n}=F_{n\pm}$ converging
to $\Gamma$ such that $\mathcal{P}(F_{n\pm})=\mathcal{P}(F_{\pm})$
for all $n$. Then, the above two implications regarding broken trajectories
imply that every trajectory $(\kappa_{n},u_{n}')\in\mathcal{M}^{\Gamma}(x_{-},x_{+})$,
for $x_{\pm}\in\mathcal{P}(F_{\pm})$, obeys to the rules of the barricade. 
\end{proof}
Thus, $\Psi_{-}$ restricts to a map $\Psi_{-}^{b}:\CO^{\ast}(\Db,F_{-})\to\CO^{\ast-1}(\Db,F_{-})$
and by Lemma \ref{lem:chain_homotopy} below, we can define its projection
$\Psi_{-}^{c}:\CO^{\ast}(\Dc,F_{-})\to\CO^{\ast-1}(\Dc,F_{-})$.

\emph{Technical lemmas.-- }When adapting computations from homology
to cohomology, we often have to rely on quotient complexes instead
of sub-complexes. Here are a few simple results from homological algebra
which will be useful in that regard. Let $(A,d_{A})$ and $(C,d_{C})$
be cochain complexes and let $B\subset A$ and $D\subset C$ be sub-complexes.
\begin{lem}
\label{lem:map_quotient}Suppose $f:(A,B)\to(C,D)$ is a chain map.
Then, there exists a unique chain map $\bar{f}:A/B\to C/D$ such that
the following diagram commutes

\[
\begin{tikzcd}[column sep = 30pt, row sep = 30pt]
	A
		\arrow{r}{f}
		\arrow{d}[swap]{\pi_B}
	&
	C
		\arrow{d}{\pi_D}
	\\
	A/B \arrow{r}[swap]{\bar{f}}
	&
	C/D
\end{tikzcd}
\]for $\pi_{B}$ and $\pi_{D}$ the canonical projections. It follows
that, on cohomology, we have the following commutative diagram. 

\[
\begin{tikzcd}[column sep = 30pt, row sep = 30pt]
	\HH^\ast (A)
		\arrow{r}{[f]}
		\arrow{d}[swap]{[\pi_B]}
	&
	\HH^\ast(C)
		\arrow{d}{[\pi_D]}
	\\
	\HH^\ast(A/B) \arrow{r}[swap]{[\bar{f}]}
	&
	\HH^\ast(C/D)
\end{tikzcd}
\]
\end{lem}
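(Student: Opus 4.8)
The plan is to construct $\bar{f}$ explicitly on cosets, check that it is well defined and a chain map, deduce uniqueness from the surjectivity of $\pi_B$, and then obtain the cohomology-level square by functoriality. There is no real geometry here; everything is formal homological algebra, so I would simply be careful about which hypotheses are used where.

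First I would set $\bar{f}(a+B)=f(a)+D$ for $a\in A$. The only point that requires a hypothesis is well-definedness: if $a-a'\in B$, then since $f$ is a chain map of pairs it sends the subcomplex $B$ into $D$, so $f(a)-f(a')=f(a-a')\in D$ and the two values agree in $C/D$. Linearity of $\bar{f}$ is inherited from that of $f$, and by construction $\pi_D\comp f=\bar{f}\comp\pi_B$, which is exactly the commutativity of the first square. Next I would verify that $\bar{f}$ is a chain map, using that the differentials $d_{A/B}$ and $d_{C/D}$ are the unique maps with $d_{A/B}\comp\pi_B=\pi_B\comp d_A$ and $d_{C/D}\comp\pi_D=\pi_D\comp d_C$ (these exist precisely because $B,D$ are subcomplexes):
\begin{align*}
d_{C/D}\comp\bar{f}\comp\pi_B
&=d_{C/D}\comp\pi_D\comp f=\pi_D\comp d_C\comp f=\pi_D\comp f\comp d_A\\
&=\bar{f}\comp\pi_B\comp d_A=\bar{f}\comp d_{A/B}\comp\pi_B,
\end{align*}
and cancelling the surjection $\pi_B$ on the right yields $d_{C/D}\comp\bar{f}=\bar{f}\comp d_{A/B}$. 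Uniqueness is the same cancellation: any chain map $g$ with $g\comp\pi_B=\pi_D\comp f$ satisfies $g(a+B)=\pi_D(f(a))=\bar{f}(a+B)$ for every $a$, hence $g=\bar{f}$. The second diagram then follows by applying $\HH^\ast(-)$ to the commuting square $\pi_D\comp f=\bar{f}\comp\pi_B$ of chain maps, which gives $[\pi_D]\comp[f]=[\bar{f}]\comp[\pi_B]$.

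The main obstacle is essentially that there is none: the only spot where an assumption genuinely enters is the well-definedness of $\bar{f}$ on cosets, which is where the notion of a chain map \emph{of pairs} (i.e.\ $f(B)\subseteq D$, not merely a chain map $A\to C$) is needed. Everything else is bookkeeping with the canonical projections and the naturality of passing to cohomology, so the proof should be short.
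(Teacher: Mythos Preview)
Your proof is correct and follows the same approach as the paper: define $\bar{f}$ on cosets by $\bar{f}(\pi_B(x))=\pi_D(f(x))$, check well-definedness using $f(B)\subseteq D$, and deduce uniqueness from surjectivity of $\pi_B$. In fact you are slightly more thorough than the paper, which omits the explicit verification that $\bar{f}$ commutes with the quotient differentials.
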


\begin{proof}
Define, for all $x\in A$, 
\begin{align*}
\bar{f}(\pi_{B}(x)) & =\pi_{D}(f(x)).
\end{align*}
We first need to show that $\bar{f}$ is well defined. Suppose $x'=x+b$
for $x\in A$ and $b\in B$. Then, since $f$ restricts to a map from
$B$ to $D$, there exists $d\in D$ such that $f(b)=d$ and we have
\begin{align*}
\bar{f}(\pi_{B}(x')) & =\pi_{D}(f(x+b))=\pi_{D}(f(x)+d)=\pi_{D}(f(x)).
\end{align*}
Thus, $\bar{f}$ is well defined. 

To prove uniqueness, we simply use the definition of $\bar{f}$. Suppose
we have another map $\bar{g}:A/B\to C/D$ which makes the above diagram
commute as well. Then, for all $x\in A$,
\begin{align*}
\bar{f}(\pi_{B}(x))-\bar{g}(\pi_{B}(x)) & =\pi_{D}(f(x))-\pi_{D}(f(x))=0.
\end{align*}
\end{proof}
\begin{lem}
\label{lem:chain_homotopy}Suppose $f:(A,B)\to(C,D)$ and $g:(C,D)\to(A,B)$
are chain maps such that $f\comp g$ is chain homotopic to the identity
\begin{align*}
f\comp g-\id_{C} & =d_{C}\comp\psi-\psi\comp d_{C}
\end{align*}
where the chain homotopy is a map $\psi:(C,D)\to(C,D)$. Then, $\bar{f}\comp\bar{g}:C/D\to C/D$
is also chain homotopic to the identity. 
\end{lem}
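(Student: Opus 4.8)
The plan is to descend the chain homotopy $\psi$ to the quotient and check by hand that the descended map witnesses the desired chain homotopy. Since by hypothesis $\psi$ maps the subcomplex $D$ into itself, the verbatim well-definedness argument from the proof of Lemma~\ref{lem:map_quotient} produces a (degree $-1$) linear map $\bar{\psi}\colon C/D\to C/D$ determined by $\bar{\psi}\comp\pi_D=\pi_D\comp\psi$. Note that $\bar{\psi}$ is \emph{not} claimed to be a chain map, and need not be; this is harmless, since the conclusion only asks for the chain-homotopy relation.

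The one thing to set up is the formal behaviour of this ``descent'' operation $\phi\mapsto\bar{\phi}$, defined on any linear map $\phi\colon C\to C$ (of any degree) with $\phi(D)\subset D$. Directly from the defining relation $\bar{\phi}\comp\pi_D=\pi_D\comp\phi$ together with surjectivity of $\pi_D$ one obtains: the descent is additive, $\overline{\phi_1+\phi_2}=\bar{\phi}_1+\bar{\phi}_2$; it is multiplicative, $\overline{\phi_1\comp\phi_2}=\bar{\phi}_1\comp\bar{\phi}_2$; it sends $\id_C$ to $\id_{C/D}$; and it sends $d_C$ to $d_{C/D}$ (here using that $D$ is a subcomplex, i.e. $d_C(D)\subset D$). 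The same computation, now using the maps $\bar{f}\colon A/B\to C/D$ and $\bar{g}\colon C/D\to A/B$ produced by Lemma~\ref{lem:map_quotient} and the inclusions $g(D)\subset B$, $f(B)\subset D$, gives $\overline{f\comp g}=\bar{f}\comp\bar{g}$. With these identities in hand I would simply apply the descent to both sides of the hypothesis $f\comp g-\id_C=d_C\comp\psi-\psi\comp d_C$, viewed as an equality of linear maps $C\to C$ each of which preserves $D$: the left-hand side becomes $\bar{f}\comp\bar{g}-\id_{C/D}$ and the right-hand side becomes $d_{C/D}\comp\bar{\psi}-\bar{\psi}\comp d_{C/D}$, which is exactly the statement that $\bar{f}\comp\bar{g}$ is chain homotopic to the identity on $C/D$ via $\bar{\psi}$.

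There is no real obstacle here; the argument is pure homological algebra and could equally well be carried out by chasing a single element $\pi_D(c)\in C/D$ through $\pi_D$ applied to the hypothesis. The only point deserving a line of care is the bookkeeping that every map occurring on either side of the identity $f\comp g-\id_C=d_C\comp\psi-\psi\comp d_C$ really does send $D$ into $D$ — for $f\comp g$ this is the composition $g(D)\subset B$, $f(B)\subset D$; for $\id_C$ it is trivial; and for $d_C\comp\psi$ and $\psi\comp d_C$ it follows from $d_C(D)\subset D$ together with $\psi(D)\subset D$ — so that the descent may legitimately be applied termwise.
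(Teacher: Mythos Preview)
Your proof is correct and is essentially the same argument as the paper's: both descend $\psi$ to $\bar{\psi}$ on $C/D$ and then verify the chain-homotopy identity on the quotient, the paper by chasing a single element $\pi_D(y)$ and you by packaging the same computation as functoriality of the descent operation $\phi\mapsto\bar{\phi}$. Your remark that $\bar{\psi}$ need not be a chain map is in fact slightly more careful than the paper's invocation of Lemma~\ref{lem:map_quotient} for $\psi$, since $\psi$ has degree $-1$ and does not commute with the differential.
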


\begin{proof}
Since the chain homotopy $\psi:(C,D)\to(C,D)$ is a chain map of pairs,
Lemma \ref{lem:map_quotient} allows us to define its projection $\bar{\psi}:C/D\to C/D$.
Thus, for all $y\in C$,
\begin{align*}
\bar{f}\comp\bar{g}(\pi_{D}(y))-\id_{C/D}(\pi_{D}(y)) & =\bar{f}\comp\pi_{B}(g(y))-\pi_{D}(\id_{C}(y))\\
 & =\pi_{D}(f\comp g(y))-\pi_{D}(\id_{C}(y))\\
 & =\pi_{D}((d_{C}\comp\psi-\psi\comp d_{C})(y))\\
 & =(d_{C/D}\comp\pi_{D}\comp\psi-\pi_{D}\comp\psi\comp d_{C})(y)\\
 & =d_{C/D}\comp\bar{\psi}(\pi_{D}(y))-\bar{\psi}\comp d_{C/D}(\pi_{D}(y))
\end{align*}
which proves that $\bar{f}\comp\bar{g}$ is chain homotopic to the
identity on $C/D$ since any $z\in C/D$ is of the form $z=\pi_{D}(y)$.
\end{proof}

\section{Proofs of main results}

\subsection{Proof of Theorem \ref{thm:finite_diameter}}

\noindent Fix $A\in(0,\infty)\setminus\Spec(\partial D,\lambda)$.
The idea of the proof is to construct a special admissible Hamiltonian
for which $c(1,\cdot)$ is bounded from below by $A-\varepsilon$
for $\varepsilon$ a small constant which depends on $A$. This construction
is inspired by \cite[Proposition 2.5]{CiFrOa10}. Then, we use the
fact that $c(1,\cdot)\geq0$ to conclude. 

\subsubsection{Construction of the Hamiltonian}

For any $\delta\in(0,1)$ and $\sigma\in(0,T_{0})$, we define the
Hamiltonian $H_{\delta,A}\in\mathcal{H}_{r_{0}}$ as follows:
\begin{itemize}
\item $H_{\delta,A}$ is the constant function $A(\delta-1)$ on $D^{\delta}$,
\item $H_{\delta,A}(r,x)=A(r-1)$ on $D\setminus D^{\delta}$,
\item $H_{\delta,A}(r,x)=0$ on $D^{r_{0}}\setminus D$
\item $H_{\delta,A}(r,x)=\sigma(r-r_{0})$ on $\hat{D}\setminus D^{r_{0}}$.
\end{itemize}
\begin{figure}[h]
\begin{centering}
\includegraphics{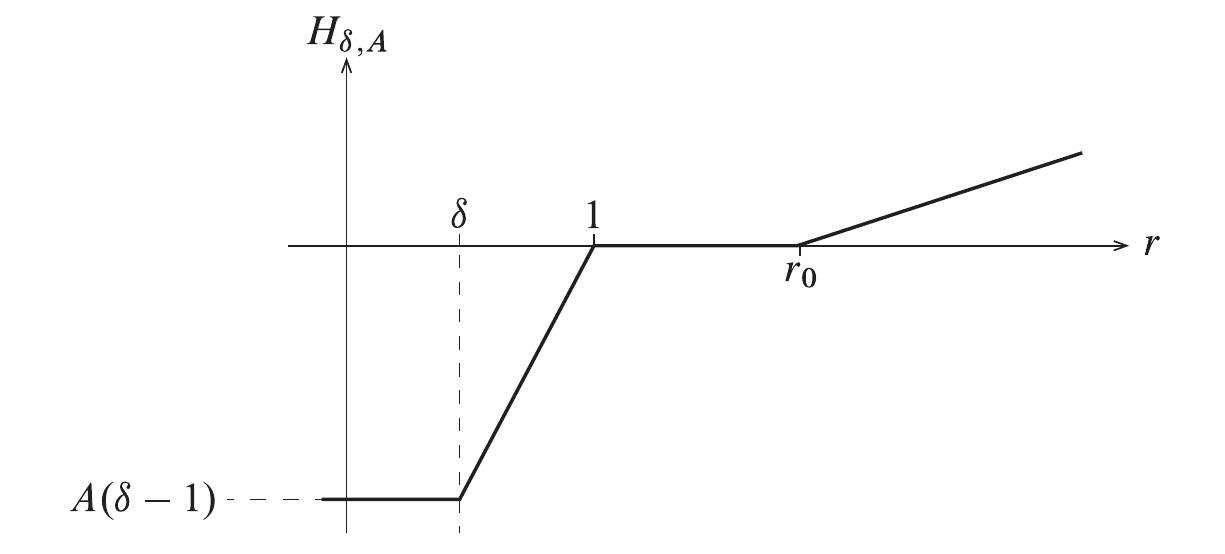}
\par\end{centering}
\caption{Radial portion of the Hamiltonian $H_{\delta,A}$.}
\end{figure}
We add a small perturbation to $H_{\delta,A}$ so that it lies in
$\mathcal{H}_{r_{0}}$. Denote by $h_{\delta,A}$ the restriction
of $H_{\delta,A}$ to $\hat{D}\setminus D$. If $\gamma$ is a 1-periodic
orbit of $h_{\delta,A}$ inside the level set $\ens{r}\times\partial D,$
its action can be written as 

\[
\Ac_{H_{\delta,A}}(\gamma)=\Ac_{H_{\delta,A}}(r)=rh'_{\delta,A}(r)-h_{\delta,A}(r).
\]
The 1-periodic orbits of $H_{\delta,A}$ can be classified in three
different categories. Recall that $\eta_{A}$ denotes the distance
between $A$ and $\Spec(\partial D,\alpha)$. 

\begin{itemize}
	\item[(I)] Critical points in $D^\delta$ with action close to $r_{\text{I}}=(1-\delta)A$
	\item[(II)] Non-constant 1-periodic orbits near $\ens{\delta}\times \del D$ with action in a small neighborhood of the interval 
	\[
		I_{\text{II}}=[\delta T_0 + (1 - \delta)A,\, A-\delta\eta_A].
	\]
	\item[(III)] Non-constant 1-periodic orbits near $\ens{1}\times \del D$ with action in a small neighborhood of the interval
	\[
		I_{\text{III}} = [T_0,\,A-\eta_A].
	\]
	\item[(IV)] Critical points in $D^{r_0}\setminus D$ with action close to $r_{\text{IV}} = 0$.
\end{itemize}Note that there are no non-constant 1-periodic orbits near $\ens{r_{0}}\times\partial D$,
since the slope of the Hamiltonian there ranges from $0$ to $\sigma$
which is less than $T_{0}$ by assumption. 

We now want to construct a Floer complex $\CO_{\text{{I,II}}}^{\ast}$
which will contain the orbits of type (I) and (II) and another complex
$\CO_{\text{{III,IV}}}^{\ast}$ containing orbits of type (III) and
$(\text{IV})$. To that end, pick $0<\delta<1$ small enough so that
$\delta A<\eta_{A}$. Now choose $\varepsilon>0$ such that 
\[
\delta A<\varepsilon<\eta_{A}.
\]
Then, we have the following inequalities :
\[
r_{\text{IV}}<I_{\text{III}}<A-\varepsilon<r_{\text{I}}<I{}_{\text{II}}.
\]

\begin{figure}[h]
\begin{centering}
\includegraphics{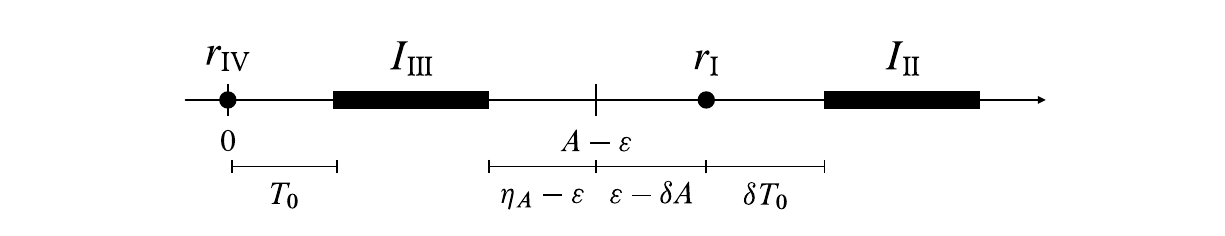}
\par\end{centering}
\caption{Distances that separate the action windows under consideration.}
\label{action_windows}
\end{figure}
\noindent As shown in Figure \ref{action_windows}, $r_{\text{\text{I}}}$,
$I_{\text{II}}$, $I_{\text{III}}$ and $r_{\text{IV}}$ are all separated
by distances which depend only on $T_{0}$, $A$, $\eta_{A}$, $\delta$
and $\varepsilon$. Thus, we can choose the perturbation we add to
$H_{\delta,A}$ to be small enough so that, in terms of action, we
have 

\[
(\text{IV})<(\text{{III}})<A-\varepsilon<(\text{{I}})<(\text{{II}}).
\]
Therefore, since the Floer differential decreases the action, we can
define the Floer co-chain complexes as
\[
\CO_{\text{{III,IV}}}^{\ast}=\CF_{<A-\varepsilon}^{\ast}(H_{\delta,A}),\quad\CO_{\text{{I,II}}}^{\ast}=\frac{{\CF^{\ast}(H_{\delta,A})}}{\CO_{\text{{III,IV}}}^{\ast}}=\CF_{(A-\varepsilon,\infty)}^{\ast}(H_{\delta,A})
\]
and they yield the Floer cohomology groups 
\begin{align*}
\HH^{\ast}(\CO_{\text{III,IV}}^{\ast})=\Hf_{(-\infty,A-\varepsilon)}^{\ast}(H_{\delta,A}),\quad\HH^{\ast}(\CO_{\text{I,II}}^{\ast})=\Hf_{(A-\varepsilon,\infty)}^{\ast}(H_{\delta,A}).
\end{align*}

A quick look at the action windows under consideration informs us
that the above complexes fit into the following short exact sequence 

\[
\begin{tikzcd}[column sep = 40pt, row sep = 30pt]
	0\arrow{r}
	&
	\CO_{\text{III,IV}}^\ast
	\arrow{r}{\iota_{-\infty,-\infty}^{A-\varepsilon,+\infty}}
	&
	\CF^\ast(H_{\delta, A})
	\arrow{r}{\pi_{-\infty, A-\varepsilon}^{+\infty,+\infty}}
	&
	\CO_{\text{I, II}}^\ast
	\arrow{r}
	&
	0
\end{tikzcd}
\]which in turn yields an exact triangle in cohomology

\[
\begin{tikzcd}
\HH^\ast(\CO_{\text{III,IV}}^\ast)
	\arrow{rr}{[\iota_{-\infty,-\infty}^{A-\varepsilon,+\infty}]}
&&
\Hf^\ast(H_{\delta,A})
	\arrow{dl}{[\pi_{-\infty, A-\varepsilon}^{+\infty,+\infty}]}
\\
&
\HH^\ast(\CO_{\text{I,II}}^\ast)
	\arrow{ul}{{[+1]}}
&
\end{tikzcd}
\]

\subsubsection{Factoring a map to $\SH^{\ast}(D)$}

We now build maps $\Psi$ and $\Psi_{\text{{I,II}}}$ such that the
diagram

\begin{equation}
\label{triangle}
\begin{tikzcd}[row sep=40pt, column sep=50pt]
\Hf^\ast(H_{\delta,A})
	\arrow{r}{[\pi_{-\infty, A-\varepsilon}^{\infty,\infty}]}
	\arrow{dr}[swap]{\Psi}
&
\HH^\ast(\CO^\ast_{\text{I,II}})
	\arrow{d}{\Psi_{\text{I,II}}}
\\
&
\SH^\ast(D)
\end{tikzcd}
\end{equation}commutes. We need to construct $\Psi$ so that it coincides with the
map $j_{H_{\delta,A}}:\Hf^{\ast}(H_{\delta,A})\to\SH^{\ast}(D)$ (see
Equation \ref{eq:j}). In virtue of Theorem \ref{thm:The-ring-structure},
this assures us that $\Psi$ is a map of unital algebras.

First, we construct $\Psi_{\text{{I,II}}}$ in three steps.

STEP 1. $[\Phi_{1}]:\HH^{\ast}(\CO_{\text{{I,II}}}^{\ast})\cong\Hf_{(\delta A-\varepsilon,\infty)}^{\ast}(H_{\delta,A}+A(1-\delta)).$
This isomorphism follows from a simple shift of $A(1-\delta)$ in
the Hamiltonian term which translates to a shift of $A(\delta-1)$
in action (see Figure \ref{Fig:homotopy_shift}). In what follows,
we denote $\hat{H}_{\delta,A}:=H_{\delta,A}+A(1-\delta)$.

\begin{figure}[H]
\begin{centering}
\includegraphics{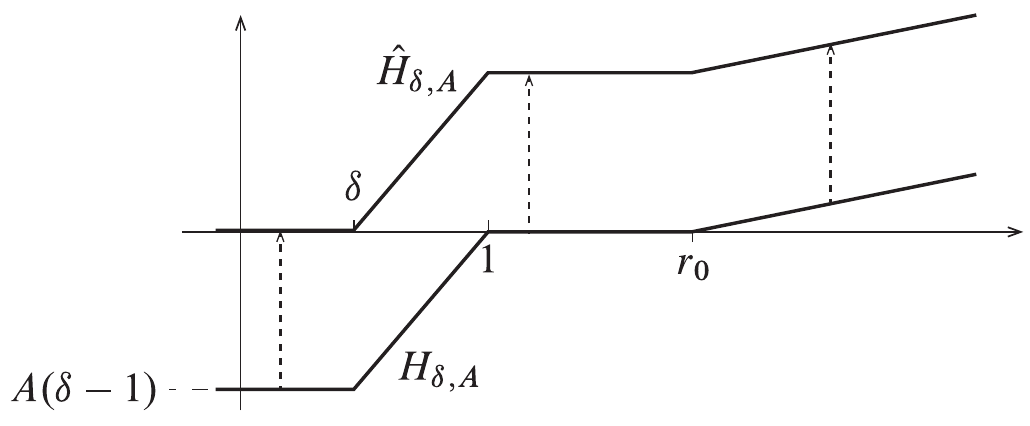}
\par\end{centering}
\caption{Homotopy from $H_{\delta,A}$ to $\hat{H}_{\delta,A}.$}
\label{Fig:homotopy_shift}
\end{figure}

For the next steps, we need to define another special family of Hamiltonians.
Given $r_{1}\in(0,+\infty)$ and $\tau\in(0,\infty)\setminus\Spec(\del D,\lambda)$,
define the Hamiltonian $K_{r_{1},\tau}$ as follows (see Figure \ref{Fig:K_Hamiltonian}).
\begin{itemize}
\item $K_{r_{1},\tau}$ is a $C^{2}$-small perturbation of the constant
zero function on $D^{r_{1}}$,
\item $K_{r_{1},\tau}(x,r)=\tau(r-r_{1})$ on $\hat{D}\setminus D^{r_{1}}$.
\end{itemize}
The 1-periodic orbits of a suitable perturbation of $K_{r_{1},\tau}$
fall in two categories. 

\begin{itemize}
	\item[(I')] Critical points in $D^{r_1}$ with action near zero,
	\item[(II')] Non-constant 1-periodic orbits near $\ens{r_1}\times \del D$ with action in a small neighborhood of the interval 
	\[
		[r_1T_0, r_1\tau- r_1\eta_\tau].
	\]
\end{itemize}By the same argument used for $H_{\delta,A}$, the action windows
$(\text{I}')$ and $(\text{II}')$ are separated if we choose a small
enough perturbation. 

\begin{figure}[h]
\begin{centering}
\includegraphics{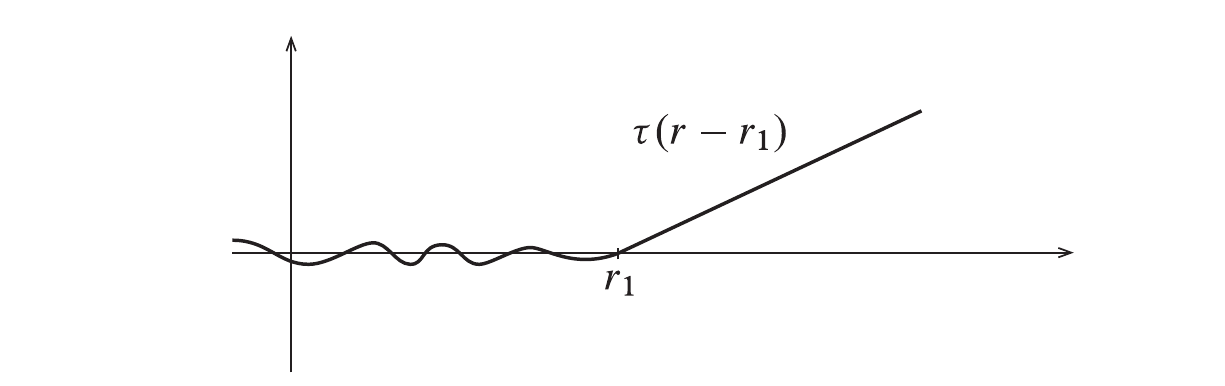}
\par\end{centering}
\caption{Radial portion of the Hamiltonian $K_{r_{0},\tau}$.}

\label{Fig:K_Hamiltonian}
\end{figure}

STEP 2. $[\Phi_{2}]:\Hf_{(\delta A-\varepsilon,\infty)}^{\ast}(\hat{H}_{\delta,A})\cong\Hf_{(\delta A-\varepsilon,\infty)}^{\ast}(K_{\delta,A}).$
Consider the homotopy
\[
F_{s}=(1-\beta(s))K_{\delta,A}+\beta(s)\hat{H}_{\delta,A},
\]
where $\beta:\RR\to[0,1]$ is a smooth function such that $\beta(s)=0$
for $s\leq-1$, $\beta(s)=1$ for $s\geq1$ and $\beta'(s)>0$ for
all $s\in(-1,1)$ (see Figure \ref{Fig:homotopy_lift}). Denote by 

\[
\begin{tikzcd}
\Phi_{F_\bullet}:\CF^\ast(\hat{H}_{\delta,A})
	\arrow{r}
&
\CF^\ast(K_{\delta,A})
\end{tikzcd}
\]the continuation map generated by $F_{\bullet}$. 

\begin{figure}[h]

\begin{centering}
\includegraphics{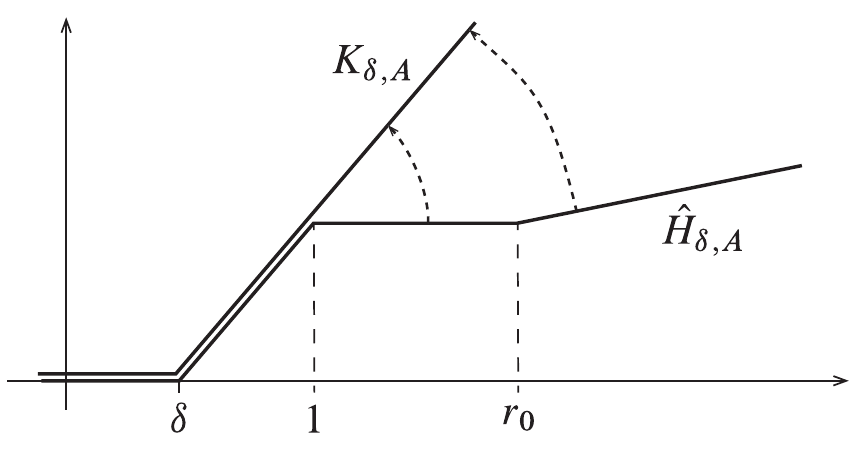}
\par\end{centering}
\caption{Homotopy from $\hat{H}_{\delta,A}$ to $K_{\delta,A}$.}

\label{Fig:homotopy_lift}

\end{figure}
The new orbits created by $\Phi_{F_{\bullet}}$ near $\ens{1}\times\del D$
will have action in the interval
\[
[T_{0}+A(\delta-1),\delta A-\eta_{A}]
\]
which, since $\varepsilon<\eta_{A}$ by assumption, is disjoint from
$(\delta A-\varepsilon,\infty)$. Hence these new orbits will all
appear out of the action window under consideration. Thus, Lemma \ref{lem:Homotopy_window}
assures us that $[\Phi_{2}]$ is an isomorphism.

STEP 3. Recall from Equation \ref{eq:j}, that we have a natural map

\[
\begin{tikzcd}
j_{K_\delta,A}:\Hf^\ast_{(\delta A-\varepsilon,+\infty)}(K_{\delta,A})
	\arrow{r}
&
\SH^\ast_{(\delta A-\varepsilon,+\infty)}(D)
\cong \SH^\ast(D) 
\end{tikzcd}.
\]The isomorphism to $\SH^{\ast}(D)$ follows from the fact that, by
construction, $\delta A-\varepsilon<0$. 

We define $\Psi_{\text{{I,II}}}:\HH^{\ast}(\CO_{\text{I,II}}^{\ast})\to\SH_{(\delta A-\varepsilon,+\infty)}^{\ast}(D)$
to the composition 
\begin{align*}
\Psi_{\text{I,II}}=j_{K_{\delta,A}}\comp[\Phi_{2}]\comp[\Phi_{1}].
\end{align*}

The morphism $\Psi$ is built in a similar fashion. We define it as
the composition of the maps 

\[
\begin{tikzcd}
\Hf^\ast(H_{\delta, A})
	\arrow{r}{\cong}[swap]{[\Phi_1']}
&
\Hf^\ast(\hat{H}_{\delta,A})
	\arrow{d}{[\Phi_2']}
\\
&
\Hf^\ast(K_{\delta,A})
	\arrow{r}{j_{K_{\delta,A}}}
&
\SH^\ast(D)
\cong 
\SH^\ast(D) 
\end{tikzcd}
\]Here, the isomorphism $[\Phi_{1}']$ follows from the fact that both
$H_{\delta,A}$ and $\hat{H}_{\delta,A}$ have the same slope at infinity.
We defined $[\Phi_{2}']$ to be the composition of the continuation
map $[\Phi^{K_{\delta,A}\hat{H}_{\delta,A}}]$ and the projection
$[\pi_{-\infty,\delta A-\varepsilon}^{+\infty+\infty}]:\Hf^{\ast}(K_{\delta,A})\to\Hf_{(\delta A-\varepsilon+\infty)}^{\ast}(K_{\delta,A})$
which is an isomorphism. The last map is given, just as in STEP 3,
by $j_{K_{\delta,A}}:\Hf^{\ast}(K_{\delta,A})\to\SH^{\ast}(D)$ .
By construction, we therefore have
\begin{align*}
\Psi=j_{K_{\delta,A}}\comp[\Phi_{2}']\comp[\Phi_{1}']=j_{K_{\delta,A}}\comp[\pi_{-\infty,\delta A-\varepsilon}^{+\infty,+\infty}]\comp[\Phi_{K_{\delta,A}\hat{H}_{\delta,A}}]\comp[\Phi_{1}']=j_{H_{\delta,A}}
\end{align*}
as desired.

Now, we need to prove that Diagram \eqref{triangle} commutes. Writing
the maps $\Psi$ and $\Psi_{\text{I,II}}$ explicitly, we have the
following diagram:

\begin{equation}
\label{square}
\begin{tikzcd}[row sep=40pt, column sep=60pt]
\Hf^\ast(H_{\delta,A})
	\arrow{r}{[\pi_{-\infty,A-\varepsilon}^{+\infty,+\infty}]}
	\arrow{d}{[\Phi_1']}
&
\HH^\ast(\CO_{\text{I,II}}^\ast)
	\arrow{d}[swap]{[\Phi_1]}
\\
\Hf^\ast(\hat{H}_{\delta,A})
	\arrow{d}{[\Phi_{K_{\delta,A}\hat{H}_{\delta,A}}]}
	\arrow{r}{[\pi_{-\infty,\delta A-\varepsilon}^{+\infty,+\infty}]}
&
\Hf^\ast_{(\delta A-\varepsilon,+\infty)}(\hat{H}_{\delta,A})
	\arrow{d}[swap]{[\Phi_2]}
\\
\Hf^\ast(K_{\delta, A})
\arrow{r}{[\pi_{-\infty,\delta A-\varepsilon}^{+\infty,+\infty}]}
&
\Hf^\ast_{(\delta A-\varepsilon,+\infty)}(K_{\delta,A})
	\arrow{d}[swap]{j_{K_{\delta,A}}}
\\
&
\SH^\ast(D) 
\end{tikzcd}
\end{equation}The top square in Diagram \ref{square} commutes because, since $\hat{H}_{\delta,A}\geq H_{\delta,A}$,
there exists a continuation map from $\Hf^{\ast}(H_{\delta,A})$ to
$\Hf_{(\delta A-\varepsilon,+\infty)}^{\ast}(\hat{H}_{\delta,A})$.
Now, since the projection $[\pi_{-\infty,\delta A-\varepsilon}^{+\infty,+\infty}]$
commutes with continuation maps (see Diagram \ref{cont_proj_res_comm}),
the bottom square in Diagram \ref{square} also commutes. Therefore,
we can conclude that Diagram \eqref{triangle} commutes. 

\subsubsection{Spectral invariant and spectral norm of $H_{\delta,A}$}

Recall that, by definition, 
\[
c(1,H_{\delta,A})=\inf\{c\in\RR\mid[\pi_{-\infty,\ell}^{+\infty,+\infty}]\comp[\iota_{-\infty,-\infty}^{\ell,+\infty}](1)=0\}
\]
Since $\Psi$ is a morphism of unital algebras, the commutative diagram
\eqref{triangle} assures us that $[\pi_{-\infty,A-\varepsilon}^{+\infty,+\infty}](1_{H_{\delta,A}})\neq0.$
Thus, from the exact triangle in cohomology induced by $[\iota_{-\infty,-\infty}^{A-\varepsilon,+\infty}]$
and $[\pi_{-\infty,A-\varepsilon}^{+\infty,+\infty}]$, we have $1\notin\im[\iota_{-\infty,-\infty}^{A-\varepsilon,+\infty}]$
and therefore,
\[
c(1,H_{\delta,A})\geq A-\varepsilon.
\]

Now, we turn our attention to the spectral norm $\gamma(H_{\delta,A})$.
We know from Lemma \ref{lem:C} that $c(1,H_{\delta,A}),c(1,\overline{{H}}_{\delta,A})\geq0$.
It thus follows from the previous inequality that 
\[
\gamma(H_{\delta,A})=c(1,H_{\delta,A})+c(1,\overline{{H}}_{\delta,A})\geq A-\varepsilon
\]
as desired. This completes the proof.

\subsection{Proof of Lemma \ref{lem:C} }

We give a proof of Lemma \ref{lem:C} which relies on the decomposition
of the Floer complex induced by the barricade. We expect that Lemma
\ref{lem:C} could also be proven using Poincaré duality and Lemma
4.1 of \cite{GaTa20}.

Let $H\in\mathcal{H}_{r_{0}}$ with slope $0<\tau_{H}<T_{0}$. Consider
a linear homotopy $F_{\bullet}$ from $F_{+}=K_{r_{0},\tau}$ for
$0<\tau<T_{0}$(see Figure \ref{Fig:K_Hamiltonian}) to $F_{-}=H$
. There exists a small perturbation $f_{\bullet}$ of $F_{\bullet}$
and an almost complex structure $J$ such that the pairs $(f_{\bullet},J)$
and $(f_{\pm},J)$ admit a barricade on $B_{r_{0},\varepsilon}$ for
$\varepsilon>0$ small enough. Fix $\delta>0$. The construction allows
us to choose $J$ time independent and $f$ such that
\begin{align*}
-\delta\leq\int_{0}^{1}\min_{x\in\hat{D}\setminus(r_{0},+\infty)\times\partial D}(f_{-}-H)\dd t\leq\delta.
\end{align*}
We may assume further that $f_{+}$ has a local minimum point $p\in\Dc$,
since $f_{+}$ is $C^{2}$-small there. It follows from Lemma \ref{lem:Standard_cohomology}
that $1_{f_{+}}=[p]\in\Hf^{\ast}(f_{+})$ is the image of the unit
$e_{D}\in\HH^{\ast}(D)$ under the isomorphism $\Phi_{f_{+}}:\HH^{\ast}(D)\to\Hf^{\ast}(f_{+})$.
Moreover, Lemma \ref{lem:Same_Slope} assures us that the isomorphism
$[\Phi_{f_{\bullet}}]:\Hf^{\ast}(f_{+})\to\Hf^{\ast}(f_{-})$ induced
by the continuation morphism $\Phi_{f_{\bullet}}:\CF^{\ast}(f_{+})\to\CF^{\ast}(f_{-})$
preserves the unit. To summarize, we have 
\begin{align*}
\Phi_{f_{+}}(e_{D})=[p]=1_{f_{+}}\quad\text{{and}\ensuremath{\quad}}[\Phi_{f_{\bullet}}(p)]=[\Phi_{f_{\bullet}}](1_{f_{+}})=1_{f_{-}}.
\end{align*}

By the continuity of spectral invariants, we know that 
\begin{align*}
c(1,H)-c(1,f_{-})\geq\int_{0}^{1}\min_{x\in\hat{D}\setminus(r_{0},+\infty)\times\partial D}(f_{-}-H)\dd t & .
\end{align*}
Therefore, by our choice of $f_{-}$, we have $c(1,H)\geq-\delta+c(1,f_{-}).$
To complete the proof, it suffices to show that $c(1,f_{-})\geq-k\delta$
for $k$\textgreater 0 independent of $f_{-}$. However, the definition
of spectral invariants guarantees the existence of $q\in\CF^{\ast}(f_{-})$
cohomologous to $1$ for which $c(1,f_{-})\geq\mathcal{A}_{f_{-}}(q)-\delta$.
We thus only need to prove that $\mathcal{A}_{f_{-}}(q)\geq-\delta.$

Recall that, by the barricade construction, $\CO^{\ast}(\Db,f_{-})$
forms a sub-complex of $\CF^{\ast}(f_{-})$. Moreover, 
\begin{align*}
\CO^{\ast}(\Dc,f_{-})=\frac{\CF^{\ast}(f_{-})}{\CO^{\ast}(\Db,f_{-})}
\end{align*}
where we denote the projection by $\pi_{-}^{\text{c}}:\CF^{\ast}(f_{-})\to\CO^{\ast}(\Dc,f_{-})$.
This allows us to write 
\begin{align*}
\Phi_{f_{\bullet}}(p) & =p_{\text{b}}+p_{\text{c}}\ \text{and}\:q=p_{\text{b}}+p_{\text{c}}+\partial(r_{\text{b}}+r_{\text{c}})
\end{align*}
for $p_{\text{b}},r_{\text{b}}\in\CO^{\ast}(\Db,f_{-})$ and $p_{\text{c}},r_{\text{c}}\in\CO^{\ast}(\Dc,f_{-})$.
Again, from the barricade construction, we have 
\begin{align*}
\partial(r_{\text{b}}+r_{0})=r_{\text{bb}}+r_{\text{cb}}+r_{\text{cc}}
\end{align*}
where $r_{\text{bb}},r_{\text{cb}}\in\CO^{\ast}(\Db,f_{-})$ and $r_{\text{cc}}\in\CO^{\ast}(\Dc,f_{-})$.
Notice that since $f_{-}$ is $C^{2}$-small on $\Dc$, $\mathcal{A}_{f_{-}}(p_{\text{c}}+r_{\text{cc}})\geq-\delta.$
Thus, if $p_{\text{c}}+r_{\text{cc}}\neq0$, we have 
\begin{align*}
\mathcal{A}_{f_{-}}(q)=\mathcal{A}_{f_{-}}(p_{\text{b}}+p_{\text{c}}+r_{\text{bb}}+r_{\text{cb}}+r_{\text{cc}})\geq\mathcal{A}_{f_{-}}(p_{\text{c}}+r_{\text{cc}})\geq-\delta.
\end{align*}

We now prove that $[p_{\text{c}}]\in\HH^{\ast}(\Dc,f_{-})$ is not
zero which is equivalent to showing that $p_{\text{c}}+r_{\text{cc}}\neq0$.
Denote by $\Phi_{\bar{f}_{\bullet}}:\CF^{\ast}(f_{-})\to\CF^{\ast}(f_{+})$
the\emph{ }continuation map generated by the inverse homotopy $\overline{f}_{s}=f_{-s}$.
We know that both $\Phi_{\bar{f}_{\bullet}}\comp\Phi_{f_{\bullet}}$
and $\Phi_{f_{\bullet}}\comp\Phi_{\bar{f}_{\bullet}}$ are chain homotopic
to the identity :
\begin{align*}
\Phi_{\bar{f}_{\bullet}}\comp\Phi_{f_{\bullet}}-\mathrm{id}_{+} & =\partial_{+}\comp\Psi_{+}-\Psi_{+}\comp\partial_{+}\\
\Phi_{f_{\bullet}}\comp\Phi_{\bar{f}_{\bullet}}-\mathrm{id}_{-} & =\partial_{-}\comp\Psi_{-}-\Psi_{-}\comp\partial_{-}
\end{align*}
for the differentials $\partial_{\pm}:\CF^{\ast}(f_{\pm})\to\CF^{\ast+1}(f_{\pm})$
and chain homotopies $\Psi_{\pm}:\CF^{\ast}(f_{\pm})\to\CF^{\ast-1}(f_{\pm})$.
(In fact, for our purpose here, we only need the first homotopy relation.)
Since $\Psi_{\pm}$ also obey the rules of the barricade by Lemma
\ref{lem:barricade_homotopy2}, the composition of the projections
$\Phic_{f_{\bullet}}:\CO^{\ast}(\Dc,f_{+})\to\CO^{\ast}(\Dc,f_{-})$
and $\Phic_{\bar{f}_{\bullet}}:\CO^{\ast}(\Dc,f_{-})\to\CO^{\ast}(\Dc,f_{+})$
are chain homotopic to the identity on $\CO^{\ast}(\Dc,f_{+})$ by
Lemma \ref{lem:chain_homotopy}. Therefore, on cohomology, the morphism
\begin{align*}
[\Phic_{\bar{f}_{\bullet}}\comp\Phi_{f_{\bullet}}^{c}] & :\HH^{\ast}(\Dc,f_{+})\to\HH^{\ast}(\Dc,f_{+})
\end{align*}
is given by the identity and since $[p]\neq0$,
\begin{align*}
[p_{\text{c}}]=[\Phic_{f_{\bullet}}(p)]=[\Phic_{f_{\bullet}}]([p])\neq0.
\end{align*}
This concludes the proof.

\subsection{Proof of Lemma \ref{lem:skeleton}}

\noindent Let $0<\delta<1$ be small enough so that 
\begin{align*}
\delta A<\delta A+\delta\eta_{A}<\eta_{A}.
\end{align*}
Then, following the proof of Theorem \ref{thm:finite_diameter}, we
have that 
\begin{align*}
c(1,H_{\delta,A})\geq A-\delta(A+\eta_{A}).
\end{align*}
Notice that $H_{\delta,A}$ converges uniformly as $\delta\to0$ to
the continuous function $H_{0,A}$ (see Figure \ref{Fig:H0A}). Then,
by continuity of spectral invariants and the previous equation, we
have

\begin{align*}
c(1,H_{0,A})=\lim_{\delta\to0}c(1,H_{\delta,A})\geq\lim_{\delta\to0}\left(A-\delta(A+\eta_{A})\right)=A.
\end{align*}

\begin{figure}[h]
\begin{centering}
\includegraphics{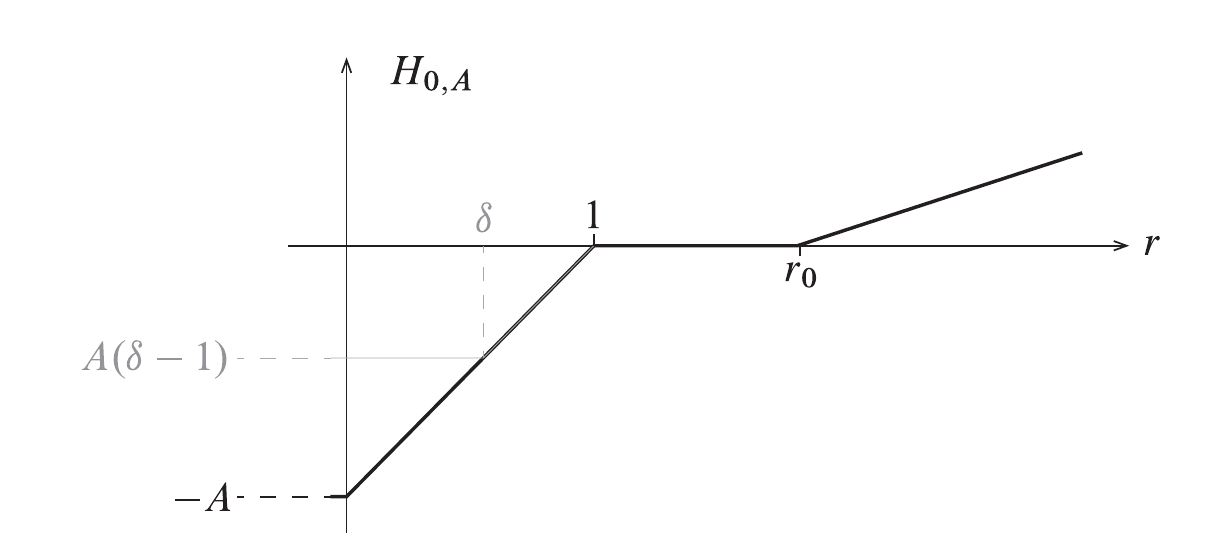}
\par\end{centering}
\caption{The continuous Hamiltonian $H_{0,A}$.}

\label{Fig:H0A}
\end{figure}
\noindent Moreover, since $H_{0,A}\geq-A$, continuity of spectral
invariants yields 
\begin{align*}
c(1,H_{0,A})\leq\max_{x\in D}-H_{0,A}=A
\end{align*}
which allows us to conclude that $c(1,H_{0,A})=A.$

First, we prove the Lemma for Hamiltonians which are constant on an
open neighborhood of the Skeleton of $D$. Consider an autonomous
Hamiltonian $H\in\mathcal{C}(D)$ such that $H\big|_{V}=-A$ and $-A\leq H\leq0$
for an open neighborhood $V$ of $\Sk(D)$ and a constant $A>0$.
The last condition on $H$ allows us to use continuity of spectral
invariance to conclude that 
\begin{align}
c(1,H)\leq A.\label{eq:c<A}
\end{align}
All we need to do now is prove that $A$ bounds $c(1,H)$ from below. 

Define $F\in\mathcal{C}(D)$ to be the continuous autonomous Hamiltonian
that agrees with $H_{0,A/r'}$ on $D$ for some $0<r'<1$. Since $H\big|_{V}=-A$
, we can choose $r'$ so that the $r'$-contraction $F_{r'}$ of $F$
under the Liouville flow (see Equation \ref{eq:Hr} and Figure \ref{Fig:HFFr}),
has support in $V$ and $-A\leq F_{r'}\leq0$. Therefore,
\begin{align}
F_{r'}(x)\geq H(x),\quad\forall x\in D.\label{eq:FrH}
\end{align}

\begin{figure}[h]
\begin{centering}
\includegraphics{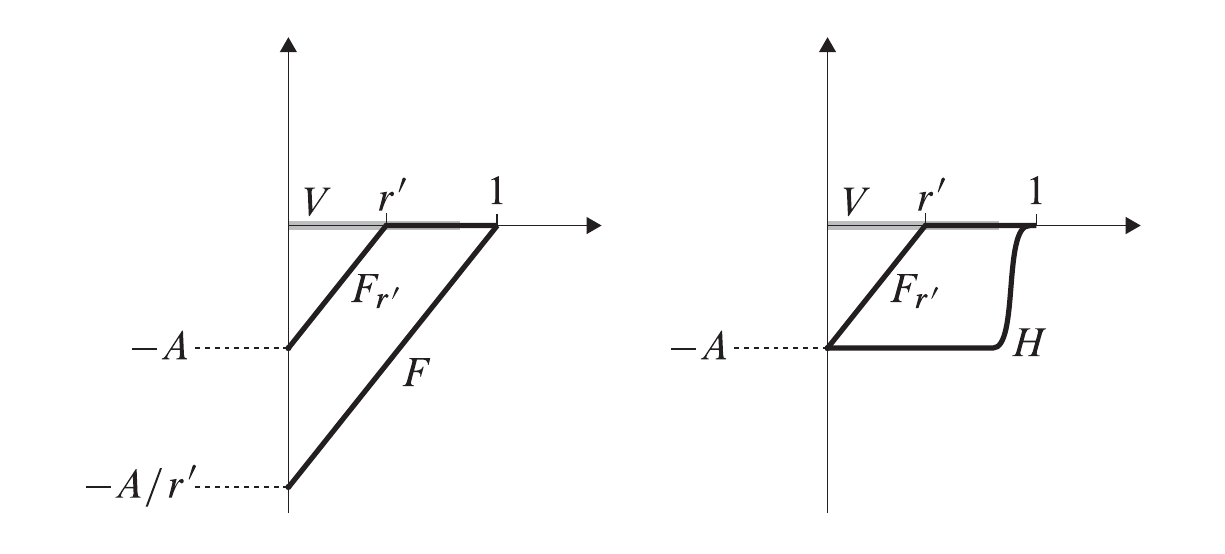}
\par\end{centering}
\caption{The Hamiltonians $F$, $F_{r'}$ and $H$.}

\label{Fig:HFFr}
\end{figure}

From the contraction principle stated in Lemma \ref{lem:Contraction_principle}
and the computation of $c(1,H_{0,A})$ above, we have 
\begin{align*}
c(1,F_{r'})=r'c(1,F)=r'c(1,H_{0,A/r})=A.
\end{align*}
This computation and Equation \ref{eq:FrH} yield, by virtue of the
monoticity of spectral invariants, the lower bound $A=c(1,F_{r'})\leq c(1,H)$
as desired. In conjunction with Equation \ref{eq:c<A}, we conclude
that $c(1,H)=A$. 

Now, we prove the Lemma in general. Suppose $H\big|_{\Sk(D)}=-A$
and $-A\leq H\leq0$. For any $\varepsilon\in(0,1)$, there exists
a compactly supported Hamiltonian $H_{\varepsilon}$ such that $H_{\varepsilon}\big|_{V_{\varepsilon}}=-A$
for an open neighborhood $V_{\varepsilon}$ of $\Sk(D)$ and $H_{\varepsilon}\leq H$
everywhere. Indeed, define $H_{\varepsilon}$ as follows : $H_{\varepsilon}\big|_{\Sk(D)}=-A$,
\begin{align*}
H_{\varepsilon}\big|_{D^{\varepsilon}\setminus\Sk(D)} & =\beta_{\varepsilon}(r)H+(1-\beta_{\varepsilon}(r))(-A)
\end{align*}
where $\beta_{\varepsilon}:(0,1)\to\RR$ is such that
\begin{itemize}
\item $\beta_{\varepsilon}\big|_{(0,\varepsilon]}\equiv0$,
\item $\beta'_{\varepsilon}\big|_{(\varepsilon,2\varepsilon/3)}>0,$
\item $\beta_{\varepsilon}\big|_{(2\varepsilon/3,1)}\equiv1.$
\end{itemize}
Then, $H_{\varepsilon}$ satisfies the required conditions and converges
uniformly to $H$ as $\varepsilon\to0$. We have $c(1,H_{\varepsilon})$
by the previous computation and by continuity of spectral invariants,
we can conclude that 
\begin{align*}
c(1,H)=c(1,H_{\varepsilon})=A.
\end{align*}
This completes the proof. 

\subsection{Proof of Theorem \ref{thm:B2}}

\noindent Let $H\in\mathcal{C}(D)$ be an autonomous Hamiltonian
such that $H\big|_{V}=-1$ and $-1\leq H\leq0$ everywhere for an
open neighborhood $V$ of $\Sk(D)$. 

Define $\iota:\RR\to\Ham_{c}(D)$ as 
\begin{align*}
\iota(s)=\varphi_{sH} & ,
\end{align*}
where $\varphi_{sH}\in\Ham_{c}(D)$ is the time-one map associated
to $sH$. We claim that $\iota$ is the desired embedding.

We first bound $d_{\gamma}(\iota(s),\iota(s'))$ from above. If $F\in\mathcal{C}(D)$,
then $\gamma(\varphi_{F})\leq\norm{F}$. Moreover, since $H$ is autonomous,
$sH\#\overline{{s'H}}=(s-s')H$. Therefore, 
\begin{align*}
d_{\gamma}(\iota(s),\iota(s')) & =\gamma(\iota(s)\iota(s')^{-1})\leq\norm{(s-s')H}=\abs{s-s'}.
\end{align*}

Now, we bound $d_{\gamma}(\iota(s),\iota(s'))$ from below. Since
$d_{\gamma}$ is symmetric, we can assume that $s\geq s'$. Then,
by Lemma \ref{lem:c geq 0} and Lemma \ref{lem:skeleton}, we have
\begin{align*}
d_{\gamma}(\iota(s),\iota(s'))\geq c(1,(s-s')H)=s-s' & ,
\end{align*}
which completes the proof. 

\bibliographystyle{alpha}
\bibliography{references.bib}

\end{document}